\theoremstyle{plain}
\newtheorem{theorem}{Théorème}[section]
\newtheorem{lemme}[theorem]{Lemme}
\newtheorem{proposition}[theorem]{Proposition}
\newtheorem{prop}[theorem]{Proposition}
\newtheorem{cor}[theorem]{Corollaire}
\theoremstyle{definition}
\newtheorem{definition}{Definition}
\newtheorem{conjecture}[theorem]{Conjecture}
\theoremstyle{remark}
\newtheorem{remarque}[theorem]{Remarque}
\def\Q{{\bf Q}}
\def\Z{{\bf Z}}
\def\C{{\bf C}}
\def\N{{\bf N}}
\def\F{{\bf F}}
\def\PP{{\bf P}}
\def\H{{H}}
\def\zp{{\Z_p}}
\def\zpe{{\mathbf{Z}_p^\times}}
\def\qpe{{\mathbf{Q}_p^\times}}
\def\cp{{\C_p}}
\def\qp{{\Q_p}}
\def\D{{\bf D}}
\def\Nrig{{\mathbf{N}_{\mathrm{rig}}}}
\def\DdR{{\D_{\mathrm{dR}}}}
\def\Dcris{{\D_{\mathrm{cris}}}}
\def\Ddif{{\D_{\mathrm{dif}}}}
\def\Ddifp{{\D_{\mathrm{dif}}^+}}
\def\Robba{\mathscr{R}}
\def\E{\mathscr{E}}
\def\BdR{{\mathbf{B}_{\mathrm{dR}}}}
\def\j{j}
\def\epsilon{\varepsilon}
\def\det{\mathrm{det}}
\def\matrice#1#2#3#4{{\big(\begin{smallmatrix}#1&#2\\ #3&#4\end{smallmatrix}\big)}}
\title{La conjecture $\epsilon$ locale de Kato en dimension $2$}
\author{Joaqu\'in Rodrigues Jacinto}
\def\@tocline#1#2#3#4#5#6#7{\relax
  \ifnum #1>\c@tocdepth 
  \else
    \par \addpenalty\@secpenalty\addvspace{#2}%
    \begingroup \hyphenpenalty\@M
    \@ifempty{#4}{%
      \@tempdima\csname r@tocindent\number#1\endcsname\relax
    }{%
      \@tempdima#4\relax
    }%
    \parindent\z@ \leftskip#3\relax \advance\leftskip\@tempdima\relax
    \rightskip\@pnumwidth plus4em \parfillskip-\@pnumwidth
    #5\leavevmode\hskip-\@tempdima
      \ifcase #1
       \or\or \hskip 1em \or \hskip 2em \else \hskip 3em \fi%
      #6\nobreak\relax
    \dotfill\hbox to\@pnumwidth{\@tocpagenum{#7}}\par
    \nobreak
    \endgroup
  \fi}
\DeclareRobustCommand\widecheck[1]{{\mathpalette\@widecheck{#1}}}
\def\@widecheck#1#2{%
    \setbox\z@\hbox{\m@th$#1#2$}%
    \setbox\tw@\hbox{\m@th$#1%
       \widehat{%
          \vrule\@width\z@\@height\ht\z@
          \vrule\@height\z@\@width\wd\z@}$}%
    \dp\tw@-\ht\z@
    \@tempdima\ht\z@ \advance\@tempdima2\ht\tw@ \divide\@tempdima\thr@@
    \setbox\tw@\hbox{%
       \raise\@tempdima\hbox{\scalebox{1}[-1]{\lower\@tempdima\box
\tw@}}}%
    {\ooalign{\box\tw@ \cr \box\z@}}}
\def\check{\widecheck}
\begin{document}

\maketitle


\begin{abstract}
On démontre une équation fonctionnelle dans la théorie d'Iwasawa d'une représentation $p$-adique du groupe de Galois absolu de $\qp$ de dimension $2$. Ceci nous permet de compléter la preuve de Nakamura de la conjecture $\epsilon$ locale de Kato en dimension $2$.
\end{abstract}

\selectlanguage{english}
\begin{abstract} 
We show an Iwasawa functional equation for a two dimensional $p$-adic representation of the absolute Galois group of $\qp$. This allows us to complete Nakamura's proof of Kato's local $\epsilon$-conjecture in dimension $2$.
\end{abstract}

\selectlanguage{french}
\setcounter{tocdepth}{2}
\tableofcontents

\section*{Introduction}

Soit $p$ un nombre premier. La conjecture $\epsilon$ locale de Kato \cite{Kato2b}, \cite{FukayaKato} \cite{Nakamura2} prédit l'existence d'une trivialisation canonique du déterminant de la cohomologie des représentations galoisiennes à coefficients dans un anneau $p$-adiquement complet $A$, interpolant des trivialisations standard (qui font intervenir les facteurs epsilon de la représentation, d'où le nom de la conjecture) quand $A$ est une extension finie de $\qp$ et la représentation est de Rham. Elle peut donc être vue comme une interpolation $p$-adique des facteurs locaux des représentations de de Rham.

Ce texte a pour objectif de montrer une équation fonctionnelle dans la théorie d'Iwasawa d'un $(\varphi, \Gamma)$-module sur l'anneau de Robba de rang $2$ et de Rham, ainsi que voir comment elle permet de compléter les résultats de Nakamura \cite{Nakamura2} pour démontrer la conjecture $\epsilon$-locale de Kato pour le cas de dimension $2$ pour une certaine classe d'anneaux $A$, en montrant que l'isomorphisme $\epsilon$ construit dans \cite{Nakamura2} interpole l'isomorphisme $\epsilon$ de de Rham pour un tel $(\varphi, \Gamma)$-module (dans \cite{Nakamura2}, ceci a été montré quand les poids de Hodge-Tate du $(\varphi, \Gamma)$-module sont $k_1 \leq 0$ et $k_2 \geq 1$).

\subsection{Une équation fonctionnelle locale}

Voici une description un peu plus détaillée des résultats.

\subsubsection{Rappels et notations}

Soient $\qp$ le corps des nombres $p$-adiques et $\zp$ l'anneau des entiers $p$-adiques. Fixons une clôture algébrique $\overline{\Q}_p$ de $\qp$ et notons $\mathscr{G}_\qp = \mathrm{Gal}(\overline{\Q}_p / \Q)$ le groupe de Galois absolu de $\qp$. Notons $\chi \colon \mathscr{G}_\qp \to \zpe$ le caractère cyclotomique, défini par l'identité $\sigma(\zeta) = \zeta^{\chi(\sigma)}$, où $\sigma \in \mathscr{G}_{\qp}$ et $\zeta \in \mu_{p^\infty}$ est une racine de l'unité d'ordre une puissance de $p$. On note $\mathscr{H} = \ker(\chi) = \mathrm{Gal}(\overline{\Q}_p / \qp(\mu_{p^\infty}))$ et $\Gamma = \mathscr{G}_\qp / \mathscr{H} = \mathrm{Gal}(\qp(\mu_{p^\infty}) / \qp) \cong \zpe$, le dernier isomorphisme étant donné par $\chi$. On fixe $(\zeta_{p^n})_{n \in \N}$ un système de racines $p^n$-ièmes de l'unité tel que $\zeta_{p} \neq 1$, $\zeta_{p^{n+1}}^p = \zeta_{p^n}$, $n \geq 1$

Soit $L$ une extension finie de $\qp$ \footnote{Le corps $L$ jouera le rôle du corps de coefficients. Il sera fixe mais l'on se permettra éventuellement, si besoin, de remplacer $L$ par une extension finie de lui même. } et soit $V \in \mathrm{Rep}_L \mathscr{G}_\qp$ une $L$-représentation continue du groupe $\mathscr{G}_\qp$, de dimension $2$, de Rham, non trianguline et à poids de Hodge-Tate $0$ et $k \geq 0$. Soit $\check{V} = V^*(1)$ le dual de Tate de $V$, qui est isomorphe (car $V$ est de dimension $2$) à $V \otimes \omega_V^\vee$, où $\omega_V = \det_V \otimes \chi^{-1}$. Notons $\DdR(V) = (\BdR \otimes V)^{\mathscr{G}_\qp}$ le module de de Rham de la représentation $V$ et considérons \[ \exp \colon \DdR(V) \to H^1(\qp, V), \] \[ \exp^* \colon H^1(\qp, V) \to \DdR(V) \] les applications exponentielle et exponentielle duale de Bloch-Kato.

On aura besoin de comparer des éléments habitant dans le module de de Rham des différents tordus de $V$ et de son dual de Tate et les identifications suivantes permettront de voir tous ces éléments dans $\DdR(V)$. Si $\delta \colon \Q^\times_p \to L^\times$ est un caractère continu unitaire, on note $L(\delta)$ la représentation de dimension $1$ où $\mathscr{G}_\qp$ agit à travers $\delta$ vu comme caractère de $\mathscr{G}_\qp$ via la théorie du corps de classes locale \footnote{Explicitement, si $g \in \mathscr{G}_\qp$ se réduit modulo $p$ à la puissance $\mathrm{deg}(g) \in \Z$ du Frobenius absolu, alors $\delta(g) = \delta(p)^{- \mathrm{deg}(g)} \delta(\chi(g))$.}, dont on fixe $e_\delta$ une base. Si $\eta \colon \zpe \to L^\times$ est un caractère constant modulo $p^n$, vu comme un caractère de $\qpe$ en posant $\eta(p) = 1$, l'élément $\mathbf{e}^{\rm dR}_\eta = G(\eta) e_\eta$ est un générateur du module $\DdR(L(\eta))$, où $G(\eta) := \sum_{a \in (\Z / p^n \Z)^\times} \eta(a) \zeta_{p^n}^a$ dénote la somme de Gauss de $\eta$. Notons $e_\eta^\vee = e_{\eta^{-1}}$ la base de $L(\eta)^*$ duale de $e_\eta$, d'où $\DdR(L(\eta)^*) =  L \cdot G(\eta)^{-1} e_\eta^\vee = L \cdot \mathbf{e}^{\rm dR, \vee}_{\eta}$. Si $j \in \Z$, on note aussi $\mathbf{e}^{\rm dR}_j = t^{-j} e_j = t^{-j} e_{\chi^j} \in \BdR \otimes L(\chi^j)$, qui est une base de $\DdR(L(\chi^j))$.

Fixons une base $f_1, f_2$ de $\DdR(V)$ et notons $$ \langle \;,\; \rangle_{\rm dR} \colon \DdR(V) \times \DdR(V) \to L, $$ le produit scalaire défini par la formule $\langle a_1 f_1 + a_2 f_2, b_1 f_1 + b_2 f_2 \rangle_{\rm dR} = a_1 b_1 + a_2 b_2$. L'isomorphisme $\wedge^2 V = L(\chi) \otimes \omega_V$ induit un isomorphisme $\wedge^2 \DdR(V) = \DdR(L(\chi) \otimes \omega_V) = (t^{-k} L_\infty \cdot e_V)^\Gamma$, où $L_\infty = L(\mu_{p^\infty})$ et $e_V$ dénote une base de $L(\det_V)$. On définit $\Omega \in L_\infty$ par la formule $f_1 \wedge f_2 = (t^k \Omega)^{-1} e_V$, ce qui nous permet de fixer les bases $(t^k \Omega)^{-1} e_V$ et $t^k \Omega e_V^\vee$ du module $\DdR(\wedge^2 V)$ et de son dual. On fixe aussi les bases $\mathbf{e}^{\rm dR}_{\omega_V} = (t^{k-1} \Omega)^{-1} e_{\omega_V}$ et $\mathbf{e}^{\rm dR, \vee}_{\omega_V} = (t^{k-1} \Omega) e_{\omega_V}^\vee$ du module $\DdR(L(\omega_V))$ et de son dual.

Enfin, notons, pour $\eta \colon \zpe \to L^\times$ et $j \in \Z$ comme ci-dessus,$$ \mathbf{e}^{\rm dR, \vee}_{\eta, j, \omega_V^\vee} = \mathbf{e}^{\rm dR, \vee}_\eta \otimes \mathbf{e}^{\rm dR}_{-j}  \otimes \mathbf{e}^{\rm dR}_{\omega_V}, $$ $$ \mathbf{e}^{\rm dR, \vee}_{\eta, j} = \mathbf{e}^{\rm dR, \vee}_\eta \otimes \mathbf{e}^{\rm dR}_{-j}, $$ des bases des duaux des modules $\DdR(L(\eta \chi^j \omega_D^{-1}))$ et $\DdR(L(\eta \chi^j))$. L'application $x \mapsto x \otimes \mathbf{e}^{\rm dR, \vee}_{\eta, -j, \omega_V^\vee}$ induit donc un isomorphisme $\DdR(\check{V}(\eta \chi^{-{\j}})) \xrightarrow{\sim} \DdR(\check{V}) $ et, de même, $x \mapsto x \otimes \mathbf{e}^{\rm dR, \vee}_{\eta^{-1}, j}$ induit isomorphisme $\DdR(V(\eta^{-1} \chi^j)) \xrightarrow{\sim} \DdR(V). $

Soit \[ H^1_{\rm Iw}(\qp, V) = \varprojlim_n H^1(\qp(\mu_{p^n}), T) \otimes_\zp \qp \] la cohomologie d'Iwasawa de la représentation $V$, où $T$ dénote n'importe quel $\zp$-réseau de $V$ stable par $\mathscr{G}_\qp$ et où la limite est prise par rapport aux applications de corestriction. On dispose des applications de spécialisation
\[ H^1_{\rm Iw}(\qp, V) \to H^1(\qp, V(\eta \chi^j)) : \mu \mapsto \int_\Gamma \eta \chi^j \cdot \mu. \]

La correspondance de Langlands $p$-adique \cite{ColmezPhiGamma} nous permet de construire une application \[ w_V \colon H^1_{\rm Iw}(\qp, V) \to H^1_{\rm Iw}(\qp, \check{V}). \] Cette application est définie à partir de l'action de la matrice $\matrice 0 1 1 0$ sur la représentation de Banach de $\mathrm{GL}_2(\qp)$ associée à $V$ par la correspondance de Langlands $p$-adique pour $\mathrm{GL}_2(\qp)$ et, au même temps, aux facteurs $\epsilon$ d'une représentation lisse admissible de $\mathrm{GL}_2(\qp)$ via la théorie du modèle de Kirillov, d'où son importance.

Notons finalement $\pi$ la représentation lisse de $\mathrm{GL}_2(\qp)$ associée à $V$ par la correspondance de Langlands classique, et $\epsilon(-)$ le facteur epsilon d'une représentation lisse de $\mathrm{GL}_2(\qp)$.

\subsubsection{Le théorème principal}

Le théorème suivant, dont la preuve est inspirée largement des techniques introduites par Nakamura \cite{Nakamura2}, Dospinescu et Colmez \cite{ColmezPoids}, décrit le comportement de l'involution de Colmez en termes de la théorie d'Iwasawa de la représentation $V$.

\begin{theorem} [Th. \ref{eqfonct1b}] \label{eaeaeqfonctintro}
Soit $\mu \in H^1_{\rm Iw}(\qp, V)$ et notons $\check{\mu} = w_V(\mu) \in H^1_{\rm Iw}(\qp, \check{V})$. Alors, pour tout $j \geq 1$, on a \[ \exp^*(\int_\Gamma \eta \chi^{-j} \cdot \check{\mu}) \otimes \mathbf{e}^{\rm dR, \vee}_{\eta, -j, \omega_V^\vee} = C(V, \eta, j) \cdot \exp^{-1}(\int_\Gamma \eta^{-1} \chi^{j} \cdot \mu) \otimes \mathbf{e}^{\rm dR, \vee}_{\eta^{-1}, j}, \] où \[ C(V, \eta, j) = \Omega^{-1} \frac{(-1)^{j}}{(j + k - 1)! (j - 1)!} \, \epsilon(\eta^{-1})^{-2}  \epsilon(\pi \otimes \eta^{-1} \otimes |\cdot|^j). \]
\end{theorem}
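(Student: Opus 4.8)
The plan is to transport the statement into the language of $(\varphi,\Gamma)$-modules over the Robba ring, where $w_V$ is incarnated by Colmez's involution on $\D\boxtimes\PP^1$ and the specialisations $\int_\Gamma\eta\chi^j\cdot(-)$ become explicit evaluation maps, and then to extract the constant $C(V,\eta,j)$ by combining the explicit reciprocity law with the local functional equation of the $\mathrm{GL}_2(\qp)$ zeta integral in the Kirillov model. Concretely, write $\D=\Drig(V)$ and $\check\D=\Drig(\check V)$, and use the identifications $H^1_{\rm Iw}(\qp,V)\cong\D^{\psi=1}$ and $H^1_{\rm Iw}(\qp,\check V)\cong\check\D^{\psi=1}$ of Fontaine and Cherbonnier--Colmez. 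Under these, specialisation at $\eta\chi^j$ is computed, for $j\ge1$, by twisting by $\partial^j$ and projecting into $\Ddifp$, while $w_V$ is the map induced by the action of $\matrice 0110$ on the Banach representation $\Pi(V)$, i.e. by the restriction to $\zp$ of Colmez's involution $w_\D$ on $\D\boxtimes\PP^1$. Part of this dictionary is the compatibility of the Iwasawa cup-product pairing $H^1_{\rm Iw}(\qp,V)\times H^1_{\rm Iw}(\qp,\check V)\to\Lambda$ with the $\mathrm{GL}_2(\qp)$-invariant pairing on $\Pi(V)\times\Pi(\check V)$.

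Next comes the analytic core. For $j\ge1$, the explicit reciprocity law of Perrin-Riou, Cherbonnier--Colmez and Colmez --- in the $(\varphi,\Gamma)$-module form used by Nakamura --- expresses $\exp^{-1}\!\bigl(\int_\Gamma\eta^{-1}\chi^j\cdot\mu\bigr)$, read in $\DdR(V)$ via $\mathbf{e}^{\rm dR,\vee}_{\eta^{-1},j}$, in terms of the image in $\Ddifp(\D)$ of the element $z\in\D^{\psi=1}$ attached to $\mu$; the denominators $(j-1)!$ and $(j+k-1)!$ are exactly the factorials occurring in the interpolation formula of that law, reflecting the two Hodge--Tate weights $j$ and $j+k$ of $V(\eta^{-1}\chi^j)$, and the period $\Omega$ enters through the passage from the basis $f_1,f_2$ to the canonical basis of $\wedge^2\DdR(V)$. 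Running the same reciprocity law on the dual side to compute $\exp^*\!\bigl(\int_\Gamma\eta\chi^{-j}\cdot\check\mu\bigr)$ and substituting the description of $\check\mu$ in terms of $\matrice 0110$ reduces the identity to comparing, inside $\Pi(V)$, the linear functional computing $\exp^{-1}\!\bigl(\int_\Gamma\eta^{-1}\chi^j\cdot\mu\bigr)$ with its transform under $\matrice 0110$.

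This last comparison is the main obstacle. One must show that $\matrice 0110$, acting on the locally algebraic vectors of $\Pi(V)$, produces exactly the factor $\epsilon(\pi\otimes\eta^{-1}\otimes|\cdot|^j)$, up to the elementary Gauss-sum factor $\epsilon(\eta^{-1})^{-2}$ --- which comes from the normalisation $\mathbf{e}^{\rm dR}_\eta=G(\eta)e_\eta$ and from the central character $\omega_V$ --- and up to the sign $(-1)^j$, coming from Gauss-sum reciprocity and the orientation in the interpolation formula. By the compatibility of the $p$-adic and classical local Langlands correspondences on locally algebraic vectors, $\matrice 0110$ realises there the standard intertwining operator of $\pi$ in its Kirillov model, so the required identity is precisely the local functional equation $Z(s,W,\Phi)=\gamma(s,\pi\otimes\eta^{-1},\psi)\,Z(1-s,\widetilde W,\widehat\Phi)$ at the half-integer attached to $j$; since $V$ is non-trianguline the representation $\pi$ is supercuspidal, so all local $L$-factors are trivial and $\gamma$ coincides with $\epsilon$ on the nose. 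To carry this out uniformly --- legitimising the passage to the Kirillov model and keeping track of the periods and signs --- I would follow the density techniques of Colmez--Dospinescu: reformulate the assertion as the equality of two $\Lambda$-adic maps out of $H^1_{\rm Iw}(\qp,V)$, prove it on a Zariski-dense set of pairs $(\eta,j)$ where the Kirillov-model computation is cleanest, and deduce the general case, including the given $j\ge1$; the few genuinely degenerate situations (such as $k=0$) are then settled by ad hoc arguments in the spirit of Nakamura's.
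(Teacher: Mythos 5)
Your general architecture --- pass to $(\varphi,\Gamma)$-modules, use the explicit reciprocity laws to convert both sides into coefficients of $\varphi^{-n}\partial^{\pm\bullet}z$, then read the action of $w=\matrice 0110$ off the Kirillov model and invoke the Bushnell--Henniart functional equation for the supercuspidal $\pi$ --- is indeed the paper's strategy, and your first two paragraphs match \S\ref{loisrec}, \S\ref{psiinvsec} and les lemmes \ref{expd1} et \ref{exp1} quite faithfully. But there is a genuine gap exactly where the new content of the theorem lies, namely in your third paragraph. For $j\ge 1$ the twist $V(\eta^{-1}\chi^j)$ has both Hodge--Tate weights $\ge 1$, and the test vectors needed to extract $[\varphi^{-n}\partial^{j+k-1}\check z]_0$ (the functions $f^i_{\eta,k,m}$ of \S\ref{accexpd}, with $k\ge1$ an auxiliary integer) are \emph{not} locally algebraic vectors of $\Pi(V)$: the subspace $\Pi(V)^{\rm alg}$ only gives access to the range $1-k\le j\le 0$ of twists, which is precisely \cite[Prop. 3.14]{Nakamura2} and precisely not the range of the statement. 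The paper's way around this is Colmez's weight-changing: one works with $\Delta=\Nrig(D)$ and the whole family of twisted actions $\ast_{k}$ on $\Delta\boxtimes\PP^1$, in which the $f^i_{\eta,k,m}$ \emph{do} become locally algebraic vectors of $\Pi(\Delta,k)$ and in which $w\ast_{k}=\partial^{k}\circ w$ converts the $\partial^{j}$ of the $\exp^*$ side into the $\partial^{-(j+k)}$ of the $\exp$ side (lemmes \ref{propinv} et \ref{formkir}, and the proof of th. \ref{eqfonct1}). Your proposal never introduces these auxiliary representations.

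Your fallback --- prove the identity on a Zariski-dense set of $(\eta,j)$ where the Kirillov computation is "cleanest" and propagate by density --- does not close this gap. The two sides are values of $\exp^*$ on specializations of $\check\mu$ and of $\exp^{-1}$ on specializations of $\mu$, landing in the varying spaces $\DdR(\check V(\eta\chi^{-j}))$ and $\DdR(V(\eta^{-1}\chi^j))$; the assertion that, after the $\mathbf{e}^{\rm dR}$-normalizations, both sides interpolate into a single analytic function of the character is essentially equivalent to the $\epsilon$-isomorphism interpolation one is trying to prove. The only a priori analytic object available is the Iwasawa pairing of le lemme \ref{lemme26}, whose specializations compute $\{\exp^*,\exp^{-1}\}_{\rm dR}$ for $j\gg0$ but $\{\exp^{-1},\exp^*\}_{\rm dR}$ for $j\ll0$, so knowing the equation in Nakamura's range does not transfer to $j\ge1$ by continuation. (Two smaller points: the paper's base case is the weight-$(0,0)$ module $\Delta$, the case $k\ge1$ being deduced afterwards by twisting in th. \ref{eqfonct1b}, so $k=0$ is not a degenerate case to be handled ad hoc; and the paper uses the Bushnell--Henniart formula for $w\cdot\xi_{\eta,m}$ directly rather than the $\gamma$-factor form of the zeta functional equation, which is only a cosmetic difference.)
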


\begin{remarque}
D'après la philosophie de Perrin-Riou et Bloch-Kato, si $\mu$ provient par restriction d'un système d'Euler d'une représentation globale, les valeurs que l'on compare dans le théorème \ref{eaeaeqfonctintro} ci-dessus sont étroitement liées aux valeurs spéciales de la fonction $L$ $p$-adique associée à cette représentation, et le théorème \ref{eaeaeqfonctintro} ci-dessus implique une équation fonctionnelle pour cette fonction $L$ $p$-adique \cite{FonctLpadiques}.
\end{remarque}

La démonstration du théorème \ref{eaeaeqfonctintro} est un long processus de traduction et occupe le deuxième chapitre de ce texte. L'idée principale, comme dans \cite{Nakamura2}, est d'exprimer les deux termes du théorème \ref{eaeaeqfonctintro} en termes de la correspondance de Langlands $p$-adique et d'utiliser la compatibilité locale-globale entre la correspondance de Langlands $p$-adique et classique, ce qui fait naturellement apparaître le facteur epsilon de la représentation lisse de $\mathrm{GL}_2(\qp)$ associée à $V$ quand on applique l'involution. De fait, nous utilisons les techniques de changement de poids de \cite{ColmezPoids} pour ramener aux calculs faits dans \cite{Nakamura2}.

Nous allons decrire les etapes principales du calcul. Soit $D = \D(V) \in \Phi \Gamma^{\text{ét}}(\Robba)$ le $(\varphi, \Gamma)$-module sur l'anneau de Robba \footnote{On renvoie le lecteur non familiarisé aux sections correspondantes du texte pour les définitions des objets introduits sans définition dans cette introduction.} associé à la représentation $V$ par les équivalences de catégories de Fontaine, Cherbonnier-Colmez et Kedlaya (prop. \ref{equivPhiGamma}). On dispose aujourd'hui de toute une cosmogonie d'objets et applications permettant d'exprimer un grand nombre d'invariants arithmétiques de la représentation $V$ intrinsèquement en termes de $D$.

\subsubsection{L'équation différentielle $p$-adique et le cas de poids nuls}

Soit $\Delta = \Nrig(D) \in \Phi \Gamma(\Robba)$ le module construit (cf. \S \ref{eqdiff}) par Berger. Le module $\Delta$ n'est pas étale et il est de rang $2$, de Rham à poids de Hodge-Tate nuls. On a sur $\Delta$ un opérateur de dérivation $\partial$ au dessus de l'opérateur $\partial = (1 + T) \frac{d}{dT}$ sur $\Robba$. Si $D$ est à poids de Hodge-Tate positifs, alors $D \subseteq \Delta$ et l'on peut donc voir l'élément $z \in D^{\psi = 1}$ dans $\Delta^{\psi = 1}$.

Rappelons (cf. \S \ref{generalites}) qu'il existe un entier $m(\Delta) \geq 0$ et, pour tout $n \geq m(\Delta)$, des sous-$\Robba^{]0, r_n]}$-modules $\Delta^{]0, r_n]}$ libres de rang $2$ tels que $\Delta = \Robba \otimes_{\Robba^{]0, r_n]}} \Delta^{]0, r_n]}$, où $r_n = \frac{1}{p^{n - 1} (p - 1)}$ et qu'on a des applications de localisation \[ \varphi^{-n} \colon \Delta^{]0, r_n]} \to L_n((t)) \otimes \DdR(\Delta), \] où $t$ dénote le ``$2i\pi$'' de Fontaine et $L_n = L \cdot \qp(\mu_{p^n})$. L'opérateur $\partial$ sur $\Delta$ satisfait la relation de commutation $\varphi^{-n} \circ \partial = \frac{d}{dt} \circ \varphi^{-n}$.

\subsubsection{La correspondance de Langlands $p$-adique pour $\mathrm{GL}_2(\qp)$}

Dans \cite{ColmezPoids}, Colmez construit (cf. \S \ref{LLpadique}) une représentation localement analytique admissible $\Pi(\Delta)$ à caractère centrale $\omega_\Delta := \det_\Delta \chi^{-1}$ de $G = \mathrm{GL}_2(\qp)$ et il explique comment retrouver la représentation $\Pi(D)$ associée à $D$ par la correspondance de Langlands $p$-adique par des méthodes de `changement de poids'. La représentation $\Pi(\Delta)$ est irréductible mais elle est munie d'un opérateur $\partial$ et, en tordant l'action de $G$ d'une manière appropriée en utilisant $\partial$, l'on peut récupérer les vecteurs localement algébriques de $\Pi(D)$ et donc en particulier la représentation lisse $\pi$ de $G$ associée à $V$ par la correspondance de Langlands classique.

Un $(\varphi, \Gamma)$-module peut être vu comme un faisceau ${\matrice {\zp - \{0\}} {\zp} 0 1}$-équivariant \footnote{L'action de $\varphi$, $\sigma_a$ et la multiplication par $(1 + T)^b$, $a \in \zpe, b \in \zp$, correspondant à ${\matrice p 0 0 1}$, ${\matrice a 0 0 1}$ et ${\matrice 1 b 0 1}$ respectivement.} $U \mapsto \Delta \boxtimes U$ sur $\zp$ dont les sections globales sont données par $\Delta \boxtimes \zp = \Delta$ et la construction de $\Pi(\Delta)$ est fondée sur l'extension de ce faisceaux en un faisceau $G$-équivariant $U \mapsto \Delta \boxtimes U$ sur $\PP^1 = \PP^1(\qp)$. On a un accouplement parfait et $G$-équivariant $[\; , \; ]_{\PP^1}$ sur $\Delta \boxtimes \PP^1$ et la suite exacte fondamentale de $G$-modules suivante:
\[ 0 \to \Pi(\Delta)^* \otimes \omega_\Delta \to \Delta \boxtimes \PP^1 \to \Pi(\Delta) \to 0. \]

Dans \S \ref{psiinvsec}, on étend des résultats classiques et on démontre que, pour $\alpha = 1$ ou $\alpha = \omega_\Delta(p)$, l'on a un isomorphisme
\[ (\Pi(\Delta)^* \otimes \omega_\Delta)^{{\matrice p 0 0 1} = \alpha} \xrightarrow{\sim} \Delta^{\psi = \alpha^{-1}}. \] Si $z \in \Delta^{\psi = 1}$, on note $\tilde{z}$ l'image inverse de $z$ par cet isomorphisme. En notant $w = {\matrice 0 1 1 0}$, l'élément $w \cdot \tilde{z}$ appartient donc à $(\Pi(\Delta)^* \otimes \omega_\Delta)^{{\matrice p 0 0 1} = \omega_\Delta(p)}$ et l'on note $\check{z} \in \Delta^{\psi = \omega_\Delta(p)^{-1}} = \check{\Delta}^{\psi = 1}$, où $\check{\Delta} = \Delta^*(1)$ dénote le dual de Tate de $\Delta$, qui s'identifie (car $\Delta$ est de dimension $2$) à $\Delta \otimes \omega_\Delta^{-1}$.

On oublie pour l'instant notre $(\varphi, \Gamma)$-module $D$ (et en particulier que $k$ dénote un de ses poids de Hodge-Tate) et on va montrer une équation fonctionnelle sur $\Delta$.

\subsubsection{Cohomologie d'Iwasawa}

Notons (cf. \S \ref{cohomIw}) $H^i_{\rm Iw}(\Delta)$ le premier groupe de cohomologie d'Iwasawa défini dans \cite{Pottharst}. On a un isomorphisme
\[ \mathrm{Exp}^* \colon H^1_{\rm Iw}(\Delta) \xrightarrow{\sim} \Delta^{\psi = 1}. \]
Notons $\mu_z = (\mathrm{Exp}^*)^{-1} (z) \in \Delta^{\psi = 1}$ et $\mu_{\check{z}} = (\mathrm{Exp}^*)^{-1} (\check{z}) \in \check{\Delta}^{\psi = 1} = \Delta^{\psi = \omega_\Delta(p)^{-1}}$.

\subsubsection{Lois de réciprocité}

Les lois de réciprocité (cf. \S \ref{loisrec}) nous permettent d'écrire les valeurs $\exp^*(\int_\Gamma \eta \chi^{-j} \cdot \check{\mu}) \otimes \mathbf{e}^{\rm dR, \vee}_{\eta, -j, \omega_\Delta^\vee}$ et $\exp^{-1}(\int_\Gamma \eta^{-1} \chi^{j} \cdot \mu) \otimes \mathbf{e}^{\rm dR, \vee}_{\eta^{-1}, j}$ en termes de $z$ et des applications de localisation. On commence par démontrer (lemme \ref{expd1}) que, pour $m \gg 0$ et $j, k \in \Z$ tels que $j + k \geq 1$, on a \footnote{Notons que les deux variables $j$ et $k$ semblent être redondantes, mais elles seront utiles plus tard.}
\begin{equation} \label{eq1intro} \exp^*(\int_\Gamma \eta \chi^{-j - k} \cdot \mu_{\check{z}}) \otimes \mathbf{e}^{\rm dR, \vee}_{\eta, -j - k, \omega_\Delta^\vee} = * \cdot \mathrm{Tr}_{L_n / L} ( G(\eta)^{-1} \Omega^{-1} [ \varphi^{-n} \partial^{j + k - 1} \check{z} ]_0).
\end{equation}
où $*$ est une constante explicite, $\mathrm{Tr}_{L_m / L} \colon L_n \to L$ dénote la trace et $[ \cdot ]_0 \colon L_n((t)) \to L_n$ dénote le coefficient du terme de degré $0$ en $t$. On dispose d'une formule analogue pour les valeurs de l'exponentielle (lemma \ref{exp1}): pour $j \geq 1$, on a
\begin{equation} \label{eq1bintro} \exp^{-1}(\int_\Gamma \eta \chi^{\j} \cdot \mu_z) \otimes \mathbf{e}^{\rm dR, \vee}_{\eta, j} = * \cdot  \mathrm{Tr}_{L_n / L} ( G(\eta)^{-1} [\varphi^{-n} \partial^{-{\j}} z]_0).
\end{equation}

\subsubsection{Le modèle de Kirillov}

L'étape suivante consiste à exprimer les termes de droite des équations \eqref{eq1intro} et \eqref{eq1bintro} en termes de la correspondance de Langlands $p$-adique. La théorie du modèle de Kirillov (\S \ref{Kirillov}) et une formule de Colmez (cf. \S \ref{dualKir}, eq. \eqref{formulemagique}) nous permettent (cf. \S \ref{dualKir}) de construire une injection $B$-équivariante
\[ \mathrm{LA}_{\rm c}(\qpe, \Delta_{\rm dif}^-)^\Gamma \to \Pi(\Delta), \] où $B = {\matrice * * 0 *} \subseteq G$ dénote le Borel de $G$, $\Delta_{\rm dif}^- = (L((t)) / L[[t]]) \otimes \DdR(\Delta)$ et $\mathrm{LA}_{\rm c}(\qpe, \Delta_{\rm dif}^-)^\Gamma$ dénote l'espace des fonctions localement analytiques $\phi \colon \qpe \to \Delta_{\rm dif}^-$ à support compact vérifiant $\sigma_a(\phi(x)) = \phi(x)$, et il est muni d'une action de $B$ définie par la formule \footnote{Si $r \in \qp$ et $n \in \N$ est tel que $rp^n \in \zp$, on pose $[\epsilon^{r}] = \varphi^{-n}((1 + T)^{p^n r}) = \zeta_{p^n}^{p^n r} e^{tr} \in L_n[[t]]$.} $$ ({\matrice a b 0 d} \cdot \phi)(x) = \omega_\Delta(d) [\epsilon^{bx / d}] \phi(ax/d). $$

En suivant des idées de Nakamura, on construit (cf. \S \ref{accexpd} et \S \ref{accexp}), pour $\eta \colon \zpe \to L^\times$ un caractère d'ordre fini, $k \geq 1$, $i \in \{1, 2\}$ et $m \in \Z$, des fonctions $f^i_{\eta, k, m} \in \mathrm{LA}_{\mathrm{c}}(\qpe, \Delta_{\rm dif}^-)^\Gamma$ et $g^i_{\eta, m} \in \mathrm{LA}_{\mathrm{c}}(\qp, \Delta_{\rm dif}^-)^\Gamma$ et on démontre (lemmes \ref{expd2} et \ref{exp2}) que
\begin{equation} \label{eq2intro}
\mathrm{Tr}_{L_n / L} \big(  G(\eta)^{-1} \Omega^{-1} \langle [  \varphi^{-n} \partial^{{\j}+k-1} \check{z}]_0, f_i \rangle_{\rm dR} \big) = * \cdot [ \partial^{\j} w \cdot \tilde{z}, f^{3 - i}_{\eta, k, 0}]_{\PP^1},
\end{equation}
\begin{equation} \label{eq2bintro} \mathrm{Tr}_{L_n / L}( G(\eta)^{-1} \langle [\varphi^{n} \partial^{-j} z]_0, f_i \rangle_{\rm dR} )  = * \cdot [ \partial^{-{\j}} \tilde{z}, g^{3 - i}_{\eta, m}]_{\PP^1}
\end{equation}
où $(f_i)_{i = 1, 2}$ dénote une base fixe de $\DdR(\Delta)$ comme expliqué avant.

\subsubsection{Apparition des vecteurs localement algébriques, facteurs epsilon et fin de la preuve}

Plus précisément, dans \cite{ColmezPoids} (cf. \S \ref{Changement}) on construit, pour tout $k \geq 1$, une représentation $\Pi(\Delta, k)$ en tordant l'action de $G$ sur $\Pi(\Delta)$ à l'aide de l'opérateur $\partial$. En identifiant $\Pi(\Delta, k)$ et $\Pi(\Delta)$ et en notant $\ast_k$ l'action tordue de $G$ sur $\Pi(\Delta)$ on a $w \ast_k v = \partial^k \circ w \cdot v$, $v \in \Pi(\Delta)$, et on voit que $\Pi(\Delta, k)$ possède des vecteurs localement algébriques et que la représentation lisse $\pi$ associée ne dépend pas de $k$. Le miracle arrive quand on observe que les fonctions $f^i_{\eta, k, m}$ et $g^i_{\eta, m}$ sont des vecteurs localement algébriques de $\Pi(\Delta, k)$ qui peuvent être exprimés via des fonctions introduites dans \cite{BH} pour retrouver le facteur epsilon d'une représentation lisse de $G$ via l'action de l'élément $w$, ce qui nous permet (lemme \ref{formkir}) de démontrer que
\begin{equation} \label{eq3intro} w \ast_k f^i_{\eta, k, m} = * \cdot \epsilon(\pi \otimes \eta^{-1}) \, g^i_{\eta^{-1}, -c(\pi \otimes \eta^{-1}) - m},
\end{equation}
où $c(\pi \otimes \eta^{-1})$ dénote le conducteur de la représentation $\pi \otimes \eta^{-1}$.

Enfin, en utilisant la $G$-équivariance de l'accouplement $[\; , \;]_{\PP^1}$ (pour passer l'action de $w$ du côté de droite de l'accouplement dans la formule \eqref{eq2intro} et appliquer l'identité \eqref{eq3intro}) et les équations \eqref{eq1intro}, \eqref{eq1bintro}, \eqref{eq2intro} (avec $m = 0$) et \eqref{eq2bintro} (avec $m = -c(\pi \otimes \eta^{-1})$) ci-dessus, on obtient (théorème \ref{eqfonct1}) une équation fonctionnelle en poids nuls et, en tordant cette équation, on montre facilement (théorème \ref{eqfonct1b}) notre résultat principal.

\subsection{Équation fonctionnelle et conjecture $\epsilon$ locale de Kato}

Donnons une application du théorème précedent à la conjecture $\epsilon$. Des cas particuliers de cette conjecture ont été montrés: elle est connue en dimension $1$ \cite{Kato2b}, \cite{Nakamura3}, et pour certains types de représentations grâce aux travaux de Benois-Berger \cite{BB}, Loeffler-Venjakob-Zerbes \cite{LVZ} et Nakamura \cite{Nakamura3}. Nakamura dans \cite{Nakamura2} a construit, en utilisant la théorie des $(\varphi, \Gamma)$-modules et l'accouplement d'Iwasawa défini par Colmez, un candidat pour l'isomorphisme $\epsilon$ pour les $(\varphi, \Gamma)$-modules de rang $1$ ou $2$, et il a montré que, dans certaines instances, cet isomorphisme interpole bien les trivialisations standards.

Le théorème \ref{eaeaeqfonctintro} ci-dessus est un complement de \cite[Prop. 3.14]{Nakamura2} \footnote{Dans les notations de ce texte, \cite[Prop. 3.14]{Nakamura2} est équivalent à montrer une équation fonctionnelle analogue à celle du théorème \ref{eaeaeqfonctintro} réliant les valeurs $\exp^*(\int_\Gamma \eta \chi^{-j} \cdot \check{\mu}) \otimes \mathbf{e}^{\rm dR, \vee}_{\eta, -j, \omega_V^\vee}$ et $\exp^*(\int_\Gamma \eta^{-1} \chi^{j} \cdot \mu) \otimes \mathbf{e}^{\rm dR, \vee}_{\eta^{-1}, j}$ pour $1 -k \leq j \leq 0$. On peut facilement adapter les techniques de ce texte pour retrouver l'énoncé de \cite[Prop. 3.14]{Nakamura2}, mais les nouvelles tehniques introduites ici ne ne sont pas nécessaires dans ce cas.}, qui est le point clé pour montrer que les isomorphismes construits dans \cite{Nakamura2} interpolent les isomorphismes $\epsilon$ classiques quand $V$ est de Rham et à poids de Hodge-Tate $k_1 \leq 0$ et $k_2 > 0$. Comme un corollaire de nos résultats et de ceux de Nakamura, on obtient

\begin{theorem} [Th. \ref{conjepsilon}]
La conjecture $\epsilon$ locale de Kato est vraie pour les représentations galoisiennes de dimension $2$.
\end{theorem}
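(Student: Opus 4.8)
The strategy is to combine Nakamura's construction from \cite{Nakamura2} with the functional equation of Théorème \ref{eaeaeqfonctintro}, reducing the general two-dimensional case to the one Nakamura already settled. Recall that in \cite{Nakamura2} an $\epsilon$-isomorphism is constructed for every rank-$1$ or rank-$2$ $(\varphi,\Gamma)$-module using the $(\varphi,\Gamma)$-module machinery and Colmez's Iwasawa pairing, and that it is proven there to interpolate the standard de Rham trivialisations \emph{under the hypothesis} that the Hodge--Tate weights are $k_1 \leq 0$ and $k_2 \geq 1$. The content of the present paper is exactly the missing compatibility: via the functional equation, the verification of the interpolation property in the remaining de Rham points (the ``negative'' twists $1-k \leq j \leq 0$ in the footnote to \cite[Prop. 3.14]{Nakamura2}) is traded against the interpolation property in the range $j \geq 1$, which is accessible. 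So the plan is: first, reduce to the trianguline/non-trianguline dichotomy; second, handle the non-trianguline de Rham case using Théorème \ref{eaeaeqfonctintro}; third, dispose of the trianguline and non-de-Rham cases by a twisting/density argument.

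More concretely, I would proceed as follows. \emph{Step 1 (reductions).} The $\epsilon$-isomorphism is compatible with twists by characters, with duality, and with base change in the coefficient ring $A$; moreover the conjecture for rank $\leq 1$ is known (\cite{Kato2b}, \cite{Nakamura3}). Using these, one reduces the rank-$2$ statement to checking, for a fixed two-dimensional de Rham $L$-representation $V$ of $\mathscr{G}_{\qp}$, that Nakamura's candidate $\varepsilon_V$ specialises correctly at every de Rham point $V(\eta\chi^j)$, for all finite-order $\eta$ and all $j \in \Z$; by further twisting one normalises the Hodge--Tate weights to $0$ and $k \geq 0$. \emph{Step 2 (the trianguline case).} When $V$ is trianguline one builds $\varepsilon_V$ out of the rank-$1$ case along the triangulation and the compatibility with exact sequences; the interpolation property then follows from the rank-$1$ result \cite{Nakamura3} together with the multiplicativity of $\epsilon$-factors. \emph{Step 3 (the non-trianguline de Rham case).} This is where Théorème \ref{eaeaeqfonctintro} enters. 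Nakamura \cite[Prop. 3.14]{Nakamura2} establishes the interpolation at the points $V(\eta\chi^j)$ with $1-k \leq j \leq 0$; his construction of $\varepsilon_V$ is built to be compatible with Tate duality, so the values at the ``dual'' points $\check V(\eta\chi^{-j})$ are determined by those at $V(\eta^{-1}\chi^{j})$ up to the explicit constant $C(V,\eta,j)$ computed in Théorème \ref{eaeaeqfonctintro}. Since that constant is exactly the ratio of the two standard de Rham $\epsilon$-trivialisations one wants to match (the factorials give the $\Gamma$-factors, $\epsilon(\eta^{-1})^{-2}$ and $\epsilon(\pi\otimes\eta^{-1}\otimes|\cdot|^j)$ give the smooth $\epsilon$-factor of $V(\eta^{-1}\chi^j)$, and $\Omega^{-1}$ is the period comparison), the functional equation propagates Nakamura's verification from the range $1-k\le j\le 0$ to the range $j\ge 1$, and together with Tate duality this covers all $j\in\Z$. \emph{Step 4 (non-de-Rham representations).} A general two-dimensional $V$ over a finite extension of $\qp$ need not be de Rham; here one invokes the density of de Rham (indeed crystabelline) points in the relevant deformation space, together with the fact that both $\varepsilon_V$ and the target trivialisation vary continuously in families, to deduce the general case from the de Rham one — this is the argument already used by Nakamura in the ranges he treats, now available in full since Steps 2--3 cover \emph{all} de Rham points.

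The main obstacle is Step 3, and it is precisely what occupies the bulk of the paper: producing the functional equation of Théorème \ref{eaeaeqfonctintro} with the \emph{correct} constant $C(V,\eta,j)$, including getting the power of $\Omega$, the sign $(-1)^j$, the two factorials, and both $\epsilon$-factors right. Everything hinges on matching this constant against the comparison of the two standard de Rham $\epsilon$-trivialisations in Nakamura's normalisation; a discrepancy by even a unit would break the interpolation. Once Théorème \ref{eaeaeqfonctintro} is in hand with its stated constant, the deduction of Théorème \ref{conjepsilon} is essentially bookkeeping: one checks that the $\epsilon$-factor appearing there is the local constant of $V(\eta^{-1}\chi^j)$ in the Deligne--Langlands normalisation used to define the standard trivialisation, and that Nakamura's $\varepsilon_V$ is compatible with Tate duality, so that the two de Rham interpolation conditions (at $j$ and at $-j$, suitably twisted) are equivalent modulo exactly the factor $C(V,\eta,j)$.
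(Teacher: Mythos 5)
Your overall strategy is the paper's: properties (1)--(4) of the conjecture and the trianguline de Rham case are already Nakamura's, the missing ingredient is property (5) for a non-trianguline de Rham $D$, and after twisting one only has to examine the three possible positions of $\mathrm{Fil}^0\,\DdR(D(j))$ --- the middle one ($\dim\mathrm{Fil}^0=1$, i.e. $1-k\le j\le 0$) being \cite[Prop. 3.14]{Nakamura2}, the case $\mathrm{Fil}^0=\DdR$ following from the case $\mathrm{Fil}^0=0$ by duality (property (4)), and the case $\mathrm{Fil}^0=0$ being the new input, reduced to the functional equation. Two points in your write-up do not match the actual logic, however. First, in Step 3 you say the functional equation \emph{propagates Nakamura's verification from the range $1-k\le j\le 0$ to the range $j\ge 1$}; it does not and cannot: duality sends $D(j)$ with $j\ge 1$ to a twist whose Hodge--Tate weights are all $\le 0$, i.e. to the third case and not to Nakamura's middle case, and Théorème \ref{eaeaeqfonctintro} relates the two sides of one and the same specialisation with $j\ge1$ (an $\exp^*$ on $\check{D}(-j)$ against an $\exp^{-1}$ on $D(j)$), not two different ranges of $j$. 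What the paper actually does at $j\ge1$ is a direct verification taking no input from the middle range: it evaluates both trivialisations of $\mathrm{Det}(H^1_{\varphi,\gamma}(D(j)))$ on the element $(\int_\Gamma\chi^j\cdot\mu_z)\wedge\exp(f_i\otimes\mathbf{e}^{\rm dR}_j)$ --- the de Rham one via $\exp^{-1}$ and the degenerate exact sequence, Nakamura's via the Iwasawa pairing together with Lemme \ref{lemme26} and Lemme \ref{lemmeacc} --- and observes that their equality is verbatim Théorème \ref{eqfonct1b} (plus the identification $\epsilon(\D_{\rm pst}(D(j)))=\epsilon(\pi\otimes|\cdot|^j)$ from local-global compatibility). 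Your opening paragraph does describe this correctly (``$j\ge1$ is accessible''), so the slip is one of formulation rather than substance, but as literally stated Step 3 would not constitute a proof. Second, your Step 4 is superfluous: property (5) only concerns de Rham pairs $(L,D)$, and properties (1)--(4) for arbitrary $(A,D)$ of rank $\le 2$ are already Nakamura's theorem, so no density-of-de-Rham-points argument is needed to conclude.
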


\subsection{Remerciement}

Ce travail fait partie de ma thèse de doctorat, réalisée à l'Institut de Mathématiques de Jussieu sous la direction de Pierre Colmez, à qui j'exprime ma plus sincère gratitude. Je remercie aussi Kentaro Nakamura et Gabriel Dospinescu, dont les travaux ont été une source constante d'inspiration. Je remercie finalement le rapporteur pour ses nombreuses remarques et corrections qui ont aidé à améliorer considérablement la pr\'esentation de cet article.

\section{Notations et préliminaires}

Cette section contient les généralités sur les $(\varphi, \Gamma)$-modules dont on aura besoin dans l'article.

\subsection{Anneaux de Fontaine} \label{anneaux}

Fixons $L$ une extension finie de $\qp$ contenue dans $\overline{\Q}_p$ et notons:
\begin{itemize} 
\item $\mathscr{O}_\E$ l'anneau des séries de Laurent à coefficients dans $\mathscr{O}_L$ bornés dont les coefficients négatifs tendent vers $0$, i.e,  $\mathscr{O}_\E = \mathscr{O}_L[[T]][1/T]^\wedge$, où la complétion est la $p$-adique;
\item $\E = \mathscr{O}_\E[1/p]$ le corps des fractions de $\mathscr{O}_\E$;
\item $\E^\dagger$ l'anneau constitué des éléments de $\E$ convergents sur une couronne d'intérieur non vide;
\item $\Robba^{[r, s]}$, $0 < r < s \leq +\infty$, l'anneau des séries de Laurent à coefficients dans $L$ et convergeant sur la couronne $r \leq v_p(T) \leq s$;
\item $\Robba^{]0, s]} := \varprojlim_{0 < r < s} \Robba^{[r , s]}$;
\item $\Robba = \varinjlim_{s > 0} \Robba^{]0, s]}$ l'anneau de Robba, formé des séries de Laurent (non nécessairement bornées) convergentes sur une couronne non vide contenue dans la boule unité ouverte.
\end{itemize}
On notera souvent $\Robba^+ = \Robba^{]0, +\infty]}$ l'anneau des fonctions analytiques sur la boule unité ouverte. Rappelons que l'on obtient $\E$ (resp. $\Robba$) en complétant $\E^\dagger$ par rapport à sa topologie $p$-adique (resp. la topologie définie par la convergence uniforme sur des couronnes fermées).

\subsection{$(\varphi, \Gamma)$-modules}

Rappelons que, si  $\mathscr{A} \in \{ \mathscr{O}_\E, \E, \E^\dagger, \Robba \}$, un $(\varphi, \Gamma)$-module sur $\mathscr{A}$ est un $\mathscr{A}$-module de type fini muni d'actions semi-linéaires continues de $\Gamma$ et d'un opérateur $\varphi$, commutant entre elles. Si $D$ est un $(\varphi, \Gamma)$-module sur $\mathscr{O}_\E$, on dit que $D$ est étale si $\varphi(D)$ engendre $D$ comme $\mathscr{O}_\E$-module. Un $(\varphi, \Gamma)$-module sur $\E$ est étale s'il possède un $\mathscr{O}_\E$-réseau stable par $\Gamma$ et $\varphi$ qui est étale. Un $(\varphi, \Gamma)$-module $D$ sur $\E^\dagger$ est étale si $D \otimes_{\E^\dagger} \E$ est étale. Finalement, un $(\varphi, \Gamma)$-module $D$ sur $\Robba$ est dit étale s'il contient un $(\varphi, \Gamma)$-module $D^\dagger$ étale sur $\E^\dagger$ tel que $D = D^\dagger \otimes_{\E^\dagger} \Robba$.

Notons, pour $\mathscr{A} \in \{ \mathscr{O}_\E, \E, \E^\dagger, \Robba \}$, $\Phi \Gamma(\mathscr{A})$ (resp. $\Phi \Gamma^{\text{ét}}(\mathscr{A})$) la catégorie des $(\varphi, \Gamma)$-modules (resp. étales) sur $\mathscr{A}$. L'intérêt de ces objets réside dans le résultat classique fondamental suivant:

\begin{prop} [\cite{Fontaine90}, \cite{CC}, \cite{Kedlaya}] \label{equivPhiGamma}
Il existe des équivalences de catégories $\mathrm{Rep}_L \mathscr{G}_\qp \cong \Phi \Gamma^{\text{ét}}(\E) \cong \Phi \Gamma^{\text{ét}}(\E^\dagger) \cong \Phi \Gamma^{\text{ét}}(\Robba)$.
\end{prop}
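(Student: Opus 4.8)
Le plan est de d\'emontrer les trois \'equivalences successives, correspondant aux trois r\'ef\'erences cit\'ees.

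\emph{\'Etape 1 (Fontaine) : $\mathrm{Rep}_L\mathscr{G}_\qp\cong\Phi\Gamma^{\text{\'et}}(\E)$.} On \'etablit d'abord la version enti\`ere $\mathrm{Rep}_{\zp}\mathscr{G}_\qp\cong\Phi\Gamma^{\text{\'et}}(\mathscr{O}_\E)$, le cas g\'en\'eral s'en d\'eduisant en inversant $p$ puis en tensorisant par $\mathscr{O}_L$ (tout \'etant fonctoriel en $L$). On introduit l'anneau bascul\'e $\widetilde{\Ebf}^+=\varprojlim_{x\mapsto x^p}\mathscr{O}_{\cp}/p$, muni de $\varphi$ et de $\mathscr{G}_\qp$, et $\widetilde{\Ebf}=\mathrm{Frac}(\widetilde{\Ebf}^+)$ ; la th\'eorie du corps des normes de Fontaine--Wintenberger fournit un plongement $\varphi$- et $\mathscr{G}_\qp$-\'equivariant $\fp((\overline T))\hookrightarrow\widetilde{\Ebf}$, et la cl\^oture s\'eparable $\Ebf$ de $\fp((\overline T))$ dans $\widetilde{\Ebf}$ est stable sous $\mathscr{G}_\qp$, avec $\widehat{\Ebf}^{\mathscr{H}}=\fp((\overline T))$ (Ax--Sen--Tate). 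On rel\`eve ces donn\'ees en caract\'eristique $0$ : $\mathscr{O}_\E$ est un anneau de Cohen de $\fp((\overline T))$, et l'on note $\widehat{\mathscr{O}_\E^{\,\mathrm{nr}}}$ le compl\'et\'e $p$-adique de l'extension \'etale maximale de $\mathscr{O}_\E$, un anneau de Cohen de $\widehat{\Ebf}$, naturellement muni de $\varphi$ et de $\mathscr{G}_\qp$. Les foncteurs $\D\colon T\mapsto(\widehat{\mathscr{O}_\E^{\,\mathrm{nr}}}\otimes_{\zp}T)^{\mathscr{H}}$ et $\V\colon D\mapsto(\widehat{\mathscr{O}_\E^{\,\mathrm{nr}}}\otimes_{\mathscr{O}_\E}D)^{\varphi=1}$ sont quasi-inverses ; la v\'erification repose sur les deux faits $(\widehat{\mathscr{O}_\E^{\,\mathrm{nr}}})^{\mathscr{H}}=\mathscr{O}_\E$ (d\'evissage $p$-adique \`a partir du cas de caract\'eristique $p$) et la surjectivit\'e de $\varphi-1$ sur $\widehat{\mathscr{O}_\E^{\,\mathrm{nr}}}$, de noyau $\zp$ (cons\'equence modulo $p$ du th\'eor\`eme de Lang pour $\overline{\fp((\overline T))}$, puis d\'evissage et passage \`a la limite sur les $T/p^nT$), qui montrent que $\D(T)$ est \'etale du bon rang et que les morphismes canoniques $\V(\D(T))\to T$ et $\D(\V(D))\to D$ sont des isomorphismes.

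\emph{\'Etape 2 (Cherbonnier--Colmez, surconvergence) : $\Phi\Gamma^{\text{\'et}}(\E^\dagger)\xrightarrow{\sim}\Phi\Gamma^{\text{\'et}}(\E)$.} Le foncteur est $D^\dagger\mapsto\E\otimes_{\E^\dagger}D^\dagger$. La pleine fid\'elit\'e est formelle : le $\mathrm{Hom}$ entre deux $(\varphi,\Gamma)$-modules se calcule comme les invariants du $\mathrm{Hom}$ interne sous $\varphi$ et $\Gamma$, et $(\E^\dagger)^{\varphi=1}=(\E)^{\varphi=1}=\qp$, de sorte qu'aucun homomorphisme ne dispara\^{\i}t ni n'appara\^{\i}t apr\`es extension des scalaires. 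L'essentielle surjectivit\'e --- la surconvergence de tout $(\varphi,\Gamma)$-module \'etale sur $\E$, donc de toute repr\'esentation --- est le point technique crucial : partant de $D$, on produit un $\mathscr{O}_\E^\dagger$-r\'eseau de $D$ stable par $\varphi$ et $\Gamma$ par une d\'ecompl\'etion \`a la Tate--Sen. On v\'erifie les axiomes de Tate--Sen pour les anneaux de s\'eries surconvergentes (syst\`eme compatible de valuations, quasi-scindage des extensions finies de la tour cyclotomique, annulation asymptotique du $H^1$ continu de $\Gamma$), puis, par approximations successives dans lesquelles chaque application de $\varphi$ am\'eliore le rayon de convergence, on corrige une base arbitraire de $D$ en une base form\'ee d'\'el\'ements surconvergents ; comme $-\otimes_{\E^\dagger}\E$ pr\'eserve et d\'etecte l'\'etalit\'e, on obtient le mod\`ele $D^\dagger\in\Phi\Gamma^{\text{\'et}}(\E^\dagger)$ voulu.

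\emph{\'Etape 3 (Kedlaya, anneau de Robba) : $\Phi\Gamma^{\text{\'et}}(\E^\dagger)\xrightarrow{\sim}\Phi\Gamma^{\text{\'et}}(\Robba)$.} Le foncteur est $D^\dagger\mapsto\Robba\otimes_{\E^\dagger}D^\dagger$. Avec la d\'efinition adopt\'ee plus haut --- un $(\varphi,\Gamma)$-module \'etale sur $\Robba$ contient \emph{par d\'efinition} un mod\`ele \'etale $D^\dagger$ sur $\E^\dagger$ tel que $\Robba\otimes_{\E^\dagger}D^\dagger\cong D$ --- l'essentielle surjectivit\'e est tautologique, et il ne reste qu'\`a \'etablir la pleine fid\'elit\'e, c'est-\`a-dire l'unicit\'e fonctorielle du mod\`ele $D^\dagger$. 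Comme \`a l'\'etape pr\'ec\'edente, on se ram\`ene \`a montrer que pour $M^\dagger$ \'etale sur $\E^\dagger$ on a $(\Robba\otimes_{\E^\dagger}M^\dagger)^{\varphi=1}=(M^\dagger)^{\varphi=1}$ : l'inclusion $\supseteq$ est triviale, et la r\'eciproque exprime qu'un \'el\'ement $\varphi$-invariant d'un module \'etale sur $\Robba$ appartient n\'ecessairement \`a son r\'eseau \'etale, ce qu'on obtient par un argument de contraction ($\varphi$ fait strictement d\'ecro\^{\i}tre la taille des d\'enominateurs relativement au r\'eseau \'etale) ou, de fa\c{c}on plus robuste, en invoquant le th\'eor\`eme des pentes de Kedlaya. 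La composition des trois \'equivalences donne l'\'enonc\'e. Le principal obstacle est l'\'etape 2 : le th\'eor\`eme de surconvergence de Cherbonnier--Colmez et la machinerie de d\'ecompl\'etion de Tate--Sen qu'il met en jeu, le contr\^ole analytique --- uniforme en le niveau de la tour --- du rayon de surconvergence \'etant la cl\'e non formelle, et c'est aussi ce qui rend ensuite disponibles les constructions (les $\Robba^{]0,r_n]}$, les localisations $\varphi^{-n}$, l'\'equation diff\'erentielle $p$-adique) employ\'ees dans le reste de l'article.
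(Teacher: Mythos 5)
The paper gives no proof of its own for this proposition --- it simply cites Fontaine, Cherbonnier--Colmez and Kedlaya --- and your three-step sketch is a faithful outline of precisely those cited arguments (corps des normes et d\'evissage pour l'\'equivalence de Fontaine, d\'ecompl\'etion de Tate--Sen pour la surconvergence, th\'eorie des pentes jointe \`a la d\'efinition, tautologique ici, de l'\'etalit\'e sur $\Robba$ pour la derni\`ere \'etape). La seule impr\'ecision est l'identit\'e $\widehat{\Ebf}^{\mathscr{H}}=\fp((\overline T))$, qui vaut pour la cl\^oture s\'eparable $\Ebf$ plut\^ot que pour sa compl\'etion (dont les invariants donnent la perfection compl\'et\'ee), mais cela n'affecte pas l'argument puisque les faits r\'eellement utilis\'es --- $(\widehat{\mathscr{O}_\E^{\,\mathrm{nr}}})^{\mathscr{H}}=\mathscr{O}_\E$ et l'exactitude de $\varphi-1$ --- sont correctement \'enonc\'es.
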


Si $D \in \Phi \Gamma(\mathscr{A})$ et $\delta \colon \qpe \to L^\times$ est un caractère, on note $e_\delta$ une base du module $L(\delta)$ muni d'actions de $\varphi$ et $\Gamma$ via les formules $\varphi(e_\delta) = \delta(p) e_\delta$ et $\sigma_a(e_\delta) = \delta(a) e_\delta$, $a \in \zpe$. On note $D(\delta) = D \otimes \delta$ le module $D \otimes_L L(\delta)$. Le choix de $e_\delta$ fournit un isomorphisme de $L$-espaces vectoriels $D \xrightarrow{\sim} D(\delta): x \mapsto x \otimes e_\delta$.

\subsection{Sous-modules naturels de $D$} \label{generalites}

Soit $D \in \Phi\Gamma(\Robba)$ de rang $d$. L'algèbre de Lie de $\Gamma$ agit sur $D$ (cf. \cite[\S 5.1]{Berger02}) via l'opérateur $L$-linéaire $$ \nabla = \lim_{a \to 1} \frac{\sigma_a - 1}{a - 1} = \frac{\log (\gamma)}{\log (\chi(\gamma))} = \frac{1}{\log (\chi(\gamma))} \sum_{i = 1}^{+\infty} (-1)^{i + 1} \frac{(\gamma - 1)^i}{i}, $$ où $\gamma \in \Gamma$ dénote un élément assez proche de $1$, ce qui définit un opérateur différentiel au-dessus de l'opérateur $\nabla = t (1 + T) \frac{d}{dT}$, où $t = \log(1 + T)$ dénote le $2 i \pi$ de Fontaine, agissant sur $\Robba$. 

Rappelons que, d'après \cite[Th. I.3.3]{Berger08}, il existe $m(D) \geq 0$ et, pour tout $s \leq r_{m(D)} := \frac{1}{(p - 1) p^{m(D) - 1}}$, des uniques sous-$\E^{]0, s]}$-modules $D^{]0, s]}$ de $D$ tels que $D = \Robba \otimes_{\Robba^{]0, s]}} D^{]0, s]}$ et $\varphi(D^{]0, s]}) \subseteq D^{]0, s/p]}$ induit un isomorphisme $\Robba^{]0, s/p]} \otimes_{\varphi, \Robba^{]0, s]}} D^{]0, s]} \to D^{]0, s/p]}: f \otimes x \mapsto f \varphi(x)$. Pour $0 < r < s \leq r_{m(D)}$, on pose \[ D^{[r, s]} = \E^{[r, s]} \otimes_{\E^{]0, s]}} D^{]0, s]}. \] On a alors \[ D = \varinjlim_{s > 0} \varprojlim_{0 < r < s} D^{[r, s]}, \] ce qui montre que $D$ est un espace de type $\mathrm{LF}$ (i.e limite inductive d'espaces de Fréchet). 

Rappelons que l'on a des morphismes de localisation \[ \varphi^{-n} \colon \Robba^{]0, r_n]} \hookrightarrow L_n[[t]] \] envoyant $T$ sur $\zeta_{p^n} e^{t / p^n} - 1$. Pour $n \geq m(D)$ on définit \[ \D^+_{\mathrm{dif}, n}(D) = L_n[[t]] \otimes_{\varphi^{-n}, \Robba^{]0, r_n]}} D^{]0, r_n]}, \] \[ \D_{\mathrm{dif}, n}(D) = L_n((t)) \otimes_{L_n[[t]]} \D^+_{\mathrm{dif}, n}(D), \] qui sont des $L_n[[t]]$ et $L_n((t))$-modules, respectivement, libres de rang $d$ et munis d'une action semi-linéaire de $\Gamma$. Finalement, on définit $$ \D_{\mathrm{dif}}(D) = \varinjlim_n \D_{\mathrm{dif}, n}(D) \; \;\; \D^+_{\mathrm{dif}}(D) = \varinjlim_n \D^+_{\mathrm{dif}, n}(D), $$ qui sont, respectivement, des $L_\infty((t)) = \cup_n L_n((t))$ et $L_\infty[[t]] = \cup_n L_n[[t]]$-modules libres de rang  $d$.

\subsection{Théorie de Hodge $p$-adique}

Soit $D \in \Phi\Gamma(\Robba)$ de rang $d$. On définit (cf. \cite[Def. 2.5]{Nakamura}) $$ \Dcris(D) = (D[1 / t])^\Gamma = (D \otimes_\Robba \Robba[1/t])^\Gamma \;\;\;\; \DdR(D) = (\D_{\mathrm{dif}}(D))^\Gamma, $$ qui sont des $L$-espaces vectoriels de dimension finie. On munit $\DdR(D)$ de sa filtration de Hodge, donnée par $ \mathrm{Fil}^i \, \DdR(D) = \DdR(D) \cap t^i \D_{\mathrm{dif}}^+ = (t^i \D_{\mathrm{dif}}^+)^\Gamma$. On observe que $\Dcris(D)$ est muni d'une action bijective du Frobenius $\varphi$ ainsi que d'une filtration induite par l'inclusion $\Dcris(D) \subseteq \DdR(D)$ définie par $x \in \Dcris(D) \mapsto \iota_n (\varphi^n(x)) \in \DdR(D)$, où on a noté $\iota_n = \varphi^{-n} \colon D^{]0, r_n]}[1/t] \to \Ddif(D)$ l'application de localisation \footnote{Il existe ici un petit abus évident en notant par $\varphi^{-n}$ deux applications différentes, l'une étant l'application de localisation notée usuellement $\iota_n$ et l'autre l'inverse de l'opérateur $\varphi$ agissant sur $\Dcris(D)$, mais cela ne devrait pas causer problèmes de lecture.}. On a $$ \dim_L \Dcris(D) \leq \dim_L \DdR(D) \leq \mathrm{rang}_\Robba \; D,  $$ où la première inégalité est évidente par ce qui précède et la dernière suit en remarquant que $\Ddif(D)$ est un $L_\infty((t))$-espace vectoriel de rang $d = \mathrm{rang}_\Robba \; D$ et $\DdR(D) = (\Ddif(D))^\Gamma$ est donc un $(L_\infty((t)))^\Gamma = L$-espace vectoriel de rang $\leq d$.  

\begin{definition}
Soit $D$ un $(\varphi, \Gamma)$-module sur $\Robba$. On dit que $D$ est \textit{cristallin} (resp. \textit{de Rham}) si l'inégalité $\dim_\qp \Dcris(D) \leq \mathrm{rang}_\Robba \; D$ (resp. $\dim_\qp \DdR(D) \leq \mathrm{rang}_\Robba \; D$) est une égalité.
\end{definition}

Si $D$ est de Rham, on définit ses \textit{poids de Hodge-Tate} comme les opposés des entiers où la filtration change, c'est-à-dire l'ensemble $\{ -h \in \Z: \mathrm{Fil}^h \, \DdR(D) / \mathrm{Fil}^{h + 1} \, \DdR(D) \neq 0 \}$.

\subsection{L'équation différentielle $p$-adique $\Nrig(D)$} \label{eqdiff}

Rappelons la construction de l'équation différentielle $p$-adique associée à un $(\varphi, \Gamma)$-module de de Rham.

\begin{proposition} [{\cite[Th. III.2.3]{Berger08}}]
Soit $D \in \Phi\Gamma(\Robba)$ de rang $d$, de Rham, et, pour chaque $n \geq m(D)$, posons
\[ \mathbf{N}_{\mathrm{rig}}^{]0, r_n]}(D) := \{ x \in D^{]0, r_n]}[1/t] : \text{ $\varphi^{-m}(x) \in L_m[[t]] \otimes_L \DdR(D)$ pour tout $m \geq n$} \}, \]
et $\mathbf{N}_{\mathrm{rig}}(D) := \varinjlim_n \Nrig^{]0, r_n]}(D)$. Alors, $\Nrig(D)$ est un $(\varphi, \Gamma)$-module sur $\Robba$, de rang $d$, qui satisfait
\begin{itemize}
\item $\Nrig(D)[1/t] = D[1/t]. $
\item $\mathbf{D}_{\mathrm{dif}, n}^+(\Nrig(D)) = L_n[[t]] \otimes_L \mathbf{D}_{\mathrm{dR}}(D)$ pour tout $n \geq m(D)$.
\item $\nabla(\Nrig(D)) \subseteq t \Nrig(D).$
\end{itemize}
\end{proposition}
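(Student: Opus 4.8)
The plan is to realize $\Nrig(D)$ as a ``lattice modification'' of $D[1/t]$ along the zeros of $t$, carried out level by level on the $\Robba^{]0,r_n]}$-modules $D^{]0,r_n]}[1/t]$ and then glued, and to read off the three asserted properties from this description. As a first step one checks that $\Nrig^{]0,r_n]}(D)$ is stable under multiplication by $\Robba^{]0,r_n]}$ and under $\Gamma$, that $\varphi(\Nrig^{]0,r_n]}(D)) \subseteq \Nrig^{]0,r_{n+1}]}(D)$, and that $\nabla(\Nrig^{]0,r_n]}(D)) \subseteq t\,\Nrig^{]0,r_n]}(D)$. Indeed, if $f \in \Robba^{]0,r_n]}$ and $x \in \Nrig^{]0,r_n]}(D)$, then for $m \geq n$ one has $\varphi^{-m}(fx) = \varphi^{-m}(f)\,\varphi^{-m}(x) \in L_m[[t]]\cdot\big(L_m[[t]]\otimes_L\DdR(D)\big) = L_m[[t]]\otimes_L\DdR(D)$, using $\varphi^{-m}(\Robba^{]0,r_m]})\subseteq L_m[[t]]$; stability under $\Gamma$ is analogous, $\DdR(D)$ being $\Gamma$-stable. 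For $\varphi$ one uses $\varphi(t)=pt$ and the identity of localization maps $\varphi^{-m}\circ\varphi = \varphi^{-(m-1)}$, which gives $\varphi^{-m}(\varphi(x)) = \varphi^{-(m-1)}(x)\in L_{m-1}[[t]]\otimes_L\DdR(D)\subseteq L_m[[t]]\otimes_L\DdR(D)$ for $m\geq n+1$. Finally, from $\nabla = t\partial$ and $\varphi^{-m}\circ\partial = \tfrac{d}{dt}\circ\varphi^{-m}$ one deduces $\varphi^{-m}\circ\nabla = \big(t\tfrac{d}{dt}\big)\circ\varphi^{-m}$; since $\nabla$ kills $\DdR(D)$, the constant lattice satisfies $t\tfrac{d}{dt}\big(L_m[[t]]\otimes_L\DdR(D)\big)\subseteq t\cdot\big(L_m[[t]]\otimes_L\DdR(D)\big)$, so $\varphi^{-m}(\nabla x/t) = \tfrac{d}{dt}\varphi^{-m}(x)\in L_m[[t]]\otimes_L\DdR(D)$ for every $m\geq n$, i.e. $\nabla x/t\in\Nrig^{]0,r_n]}(D)$. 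This last point is already the third bullet.

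\emph{Freeness and full rank (the heart of the matter).} Inside the annulus $\{0<v_p(T)\leq r_n\}$ the zeros of $t=\log(1+T)$ are exactly the points $T=\zeta_{p^m}-1$ for $m\geq n$ (since $v_p(\zeta_{p^m}-1)=r_m\leq r_n\Leftrightarrow m\geq n$); they are simple and $t$ is a uniformizer at each of them, the completion of $D^{]0,r_n]}[1/t]$ there being described by $\varphi^{-m}$, with value $\D_{\mathrm{dif},m}(D)=L_m((t))\otimes_L\DdR(D)$ carrying the lattice $\D_{\mathrm{dif},m}^+(D)$. With this vocabulary, $\Nrig^{]0,r_n]}(D)$ is precisely the coherent $\Robba^{]0,r_n]}$-submodule of $D^{]0,r_n]}[1/t]$ obtained by replacing, at each $T=\zeta_{p^m}-1$, the lattice $\D_{\mathrm{dif},m}^+(D)$ by $L_m[[t]]\otimes_L\DdR(D)$; the two are commensurable exactly because $D$ is de Rham, being $L_m[[t]]$-lattices in the same $d$-dimensional $L_m((t))$-space. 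On a closed sub-annulus $\{r\leq v_p(T)\leq r_n\}$ with $r>0$ only finitely many of these points occur, $\Robba^{[r,r_n]}$ is a principal ideal domain, and a modification of a finite free module at finitely many points through commensurable lattices is again finite free of the same rank; passing to $\Robba^{]0,r_n]}=\varprojlim_r\Robba^{[r,r_n]}$ via the usual coherence patching gives that $\Nrig^{]0,r_n]}(D)$ is free of rank $d$ over $\Robba^{]0,r_n]}$. Inverting $t$ kills all the bad points, so $\Nrig^{]0,r_n]}(D)[1/t]=D^{]0,r_n]}[1/t]$.

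\emph{Conclusion.} Flatness of $\varphi^{*}$, together with $\varphi(t)=pt$ and $\varphi^{-m}\circ\varphi=\varphi^{-(m-1)}$, shows that $\varphi$ induces an isomorphism $\Robba^{]0,r_{n+1}]}\otimes_{\varphi,\Robba^{]0,r_n]}}\Nrig^{]0,r_n]}(D)\xrightarrow{\sim}\Nrig^{]0,r_{n+1}]}(D)$, so that $\Nrig(D):=\varinjlim_n\Robba\otimes_{\Robba^{]0,r_n]}}\Nrig^{]0,r_n]}(D)$ is a $(\varphi,\Gamma)$-module over $\Robba$ of rank $d$. The first bullet follows by passing to the limit from $\Nrig^{]0,r_n]}(D)[1/t]=D^{]0,r_n]}[1/t]$, and $\DdR(\Nrig(D))=\DdR(D)$ since $\DdR$ depends only on $D[1/t]$. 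For the second bullet, by construction $\varphi^{-n}$ maps $\Nrig^{]0,r_n]}(D)$ onto the whole lattice $L_n[[t]]\otimes_L\DdR(D)$ (this is the $m=n$ defining condition, together with the fact that the modification attains the full lattice), whence $\D_{\mathrm{dif},n}^+(\Nrig(D))=L_n[[t]]\otimes_L\DdR(D)$; the third bullet was proved above.

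The main obstacle is the middle step: making rigorous the lattice modification of the $\mathrm{LF}$-module $D^{]0,r_n]}$ along the sequence of zeros of $t$, which accumulates toward the boundary $v_p(T)\to 0^+$, and verifying that the result is finitely generated, free, and of full rank $d$. Everything else — the three displayed identities and the $(\varphi,\Gamma)$-module structure — then follows rather formally from the explicit description of $\Nrig^{]0,r_n]}(D)$ and the relations $\varphi(t)=pt$ and $\varphi^{-m}\circ\nabla=(t\tfrac{d}{dt})\circ\varphi^{-m}$.
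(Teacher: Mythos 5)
Le texte ne donne aucune d\'emonstration de cet \'enonc\'e~: il est rappel\'e tel quel de Berger (\cite[Th. III.2.3]{Berger08}). Votre proposition est en substance une esquisse correcte de l'argument original de Berger, et non une route diff\'erente. Les v\'erifications formelles (stabilit\'e par $\Robba^{]0,r_n]}$ et par $\Gamma$, compatibilit\'e \`a $\varphi$ via $\varphi^{-m}\circ\varphi=\varphi^{-(m-1)}$, et l'inclusion $\nabla(\Nrig(D))\subseteq t\,\Nrig(D)$ d\'eduite de $\varphi^{-m}\circ\nabla=(t\tfrac{d}{dt})\circ\varphi^{-m}$ et du fait que $t\tfrac{d}{dt}$ tue le terme constant) sont justes, de m\^eme que la lecture de la deuxi\`eme puce comme la condition d\'efinissante en $m=n$. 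Le point que vous signalez vous-m\^eme comme l'obstacle principal est effectivement l'endroit o\`u se concentre tout le travail chez Berger~: la modification du fibr\'e $D^{]0,r_n]}$ aux z\'eros $T=\zeta_{p^m}-1$ de $t$, o\`u la commensurabilit\'e des r\'eseaux $\Ddifp_{,m}(D)$ et $L_m[[t]]\otimes_L\DdR(D)$ est exactement l'hypoth\`ese de Rham, puis la libert\'e du module modifi\'e. Sur une couronne ferm\'ee seul un nombre fini de points intervient et l'argument de th\'eorie des modules sur un anneau principal que vous invoquez fonctionne~; le recollement quand $r\to 0^+$ (les points de modification s'accumulant vers le bord ext\'erieur) repose sur la propri\'et\'e de B\'ezout de $\Robba^{]0,r_n]}$ et la classification des fibr\'es sur la couronne ouverte (Lazard), que votre \og coherence patching \fg{} r\'esume sans le d\'emontrer. \`A ce d\'etail pr\`es -- qui est pr\'ecis\'ement le contenu non formel du th\'eor\`eme de Berger -- votre plan est fid\`ele \`a la preuve de la r\'ef\'erence cit\'ee.
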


Le $(\varphi, \Gamma)$-module $\Nrig(D)$ ainsi obtenu est de Rham à poids de Hodge-Tate tous nuls. Remarquons que l'on peut reconstruire $D$, à partir de la donnée de $\Nrig(D)$ et de la filtration de Hodge sur $\DdR(D)$, en utilisant (cf. \cite[\S II.2]{Berger08} ou bien \cite[\S 3.1]{ColmezPoids}) la formule $$ D = \{ x \in \Nrig(D)[1/t] \; : \; \varphi^{-n}(x) \in \mathrm{Fil}^0(L_n((t)) \otimes_\qp \DdR(D)) \;\;\; \forall n \gg 0 \}. $$

La troisième propriété caractérisant $\Nrig(D)$ permet de définir un opérateur différentiel $$ \partial := \frac{1}{t} \nabla \colon \Nrig(D) \to \Nrig(D), $$ satisfaisant les identités $\partial \circ \varphi = p \; \varphi \circ \partial$ et $\partial \circ \sigma_a = a \; \sigma_a \circ \partial $.

\subsection{$(\varphi, \Gamma)$-modules relatifs} \label{phiGammarel} Soit $A$ une algèbre affino\"ide sur $\qp$. L'anneau de Robba $\Robba_A$ relatif à $A$ est défini en posant, pour $0 < r < s \leq +\infty$, $$ \Robba_A^{[r, s]} = \Robba^{[r, s]} \widehat{\otimes} A; \;\;\; \Robba_A^{]0, s]} = \varprojlim_{0 < r < s} \Robba_A^{[r, s]}; \; \; \; \Robba_A = \varinjlim_{s > 0} \Robba_A^{]0, s]}, $$ où le premier produit tensoriel est le produit tensoriel complété entre deux espaces de Banach.

Si $0 < r < s \leq \frac{1}{p - 1}$, on a un endomorphisme $A$-linéaire d'anneaux $\varphi \colon \Robba_A^{[r, s]} \to \Robba_A^{[r/p, s / p]}$ (resp. $\varphi \colon \Robba_A^{]0, s]} \to \Robba_A^{]0, s/p]}$ et $\varphi \colon \Robba_A \to \Robba_A$) qui envoie $T$ sur $(1 + T)^p - 1$, et une action continue du groupe $\Gamma$, agissant par $\sigma_a(T) = (1 + T)^a - 1$, $a \in \zpe$, sur tous les anneaux définis ci-dessus.

Si $0 < r < s$, l'anneau $\Robba_A^{[r/p, s/p]}$ est un $\varphi(\Robba_A^{[r, s]})$-module libre de rang $p$, ce qui permet de définir un inverse à gauche de $\varphi$ \[ \psi := p^{-1} \varphi^{-1} \circ \mathrm{Tr}_{\Robba_A^{[r/p, s/p]} / \varphi(\Robba_A^{[r, s]})} \colon \Robba_A^{[r/p, s/p]} \to \Robba_A^{[r, s]}. \] On note aussi $\psi \colon \Robba_A \to \Robba_A$ l'application qui s'en déduit. 

Si $D^{]0, s]}$ est un $\Robba_A^{]0, s]}$-module et $0 < s' < s$, on note $D^{]0, s']} := D^{]0, s]} \otimes_{\Robba_A^{]0, s]}} \Robba_A^{]0, s']}$ et $\varphi^*(D^{]0, s]}) := D^{]0, s]} \otimes_{\Robba_A^{]0, s]}, \varphi} \Robba_A^{]0, s/p]}$.

\begin{definition}
Un \textit{$(\varphi, \Gamma)$-module sur $\Robba_A^{]0, s]}$} est un module projectif de type fini $D^{]0, s]}$ sur $\Robba_A^{]0, s]}$, muni d'un isomorphisme $\Robba^{]0, s/p]}$-linéaire $\widetilde{\varphi} \colon \varphi^*(D^{]0, s]}) \to D^{]0, s/p]}$ et d'une action semi-linéaire de $\Gamma$, commutant avec $\widetilde{\varphi}$ dans le sens évident.

Un \textit{$(\varphi, \Gamma)$-module sur $\Robba_A$} est un $\Robba_A$-module projectif de type fini $D$ tel qu'il existe $s > 0$ et un $(\varphi, \Gamma)$-module $D^{]0, s]}$ sur $\Robba_A^{]0, s]}$ tel que $D \cong D^{]0, s]} \otimes_{\Robba_A^{]0, s]}} \Robba_A$.
\end{definition}

Soit $D^{]0, s]}$ un $(\varphi, \Gamma)$-module sur $\Robba_A^{]0, s]}$. L'isomorphisme $\widetilde{\varphi} \colon \varphi^*(D^{]0, s]}) \to D^{]0, s/p]}$ induit, pour tout $0 < s' < s$, des opérateurs semi-linéaires $\varphi \colon D^{]0, s']} \to D^{]0, s'/p]}$, définis comme la composée
\[ \varphi \colon D^{]0, s']} = D^{]0, s]} \otimes_{\Robba_A^{]0, s]}} \Robba_A^{]0, s']} \to \varphi^*(D^{]0, s]}) \otimes_{\Robba_A^{]0, s]}} \Robba_A^{]0, s']} \to D^{]0, s/p]} \otimes_{\Robba_A^{]0, s/p]}} \Robba_A^{]0, s'/p]} = D^{]0, s'/p]}, \]
la première flèche étant celle induite par $D^{]0, s]} \to \varphi^*(D^{]0, s]}) \colon x \mapsto x \otimes 1$ et la deuxième étant $\widetilde{\varphi} \otimes \varphi$. L'isomorphisme $\varphi^*(D^{]0, s']}) \cong D^{]0, s'/p]}$ induit un morphisme $A$-linéaire $\psi \colon D^{]0, s'/p]} \cong \varphi(D^{]0, s']}) \otimes_{\varphi(\Robba_A^{]0, s']})} \Robba_A^{]0, s'/p]} \to D^{]0, s']}$ pour tout $0 < s' \leq s$, défini par $\psi(\varphi(z) \otimes f) = z \otimes \psi(f)$. Si $D$ est un $(\varphi, \Gamma)$-module sur $\Robba_A$, on a de même des opérateurs $\varphi \colon D \to D$ et $\psi \colon D \to D$.

Plus généralement, si $X$ est un espace rigide analytique sur $\qp$ et $r \geq 0$, on définit $\Robba_X^{]0, r]}$ comme le faisceau des fonctions rigides analytiques sur $X \times B_{]0, r]}$, $\Robba_X = \varinjlim_{r < 0} \Robba_X^{]0, r]}$ et un $(\varphi, \Gamma)$-module sur $\Robba_X$ est une collection compatible de $(\varphi, \Gamma)$-modules sur $\Robba_A$ pour chaque ouvert admissible $\mathrm{Sp}(A)$ de $X$.

\subsection{Cohomologie des $(\varphi, \Gamma)$-modules} \label{cohomphigamma}

Soient $A$ une algèbre aff\"inoide sur $\qp$ et $D \in \Phi \Gamma(\Robba_A)$. On note $\Gamma' \subseteq \Gamma$ la partie de $p$-torsion de $\Gamma$ (qui est triviale si $p \neq 2$ et cyclique d'ordre $2$ quand $p = 2$). Soit $\gamma \in \Gamma$ tel que son image dans $\Gamma / \Gamma'$ en est un générateur topologique. On pose $\gamma_0 = \gamma$ et, pour $n \geq 1$, $\gamma_n$ un générateur topologique de $\Gamma_n$. Pour $\delta \in \{ \varphi, \psi \}$ et $\gamma' \in \{ \gamma_n : n \geq 0\}$, on note $D' = D^{\Gamma'}$ si $n = 0$ et $D' = D$ si $n \geq 1$, et on définit le complexe $$ \mathscr{C}^\bullet_{\delta, \gamma'}(D): 0 \to D' \to D' \oplus D' \to D' \to 0, $$ où les flèches sont données, respectivement, par $x \mapsto ((\delta - 1)x, (\gamma' - 1)x)$ et $(x, y) \mapsto (\gamma' - 1)x - (\delta - 1) y$. Les modules $H_{\delta, \gamma'}^\bullet(\qp, D)$ sont définis comme les groupes de cohomologie de ce complexe.

\begin{proposition} [{\cite[Prop. 2.3.6 et Th. 4.4.2]{KPX}}] \label{KPX}
Soient $A$ une algèbre aff\"inoide sur $\qp$ et $D$ un $(\varphi, \Gamma)$-module sur $\Robba_A$. Les complexes $\mathscr{C}_{\varphi, \gamma'}(D)$ et $\mathscr{C}_{\psi, \gamma'}(D)$ sont quasi-isomorphes et les groupes de cohomologie $H^i_{\varphi, \gamma'}(D)$ sont des $A$-modules de type fini, compatibles au changement de base. On a une dualité locale et une formule de Euler-Poincaré.
\end{proposition}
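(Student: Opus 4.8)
Ce sont les r\'esultats principaux de \cite{KPX}; voici comment on peut proc\'eder. Pour la comparaison des complexes $\mathscr{C}^\bullet_{\varphi, \gamma'}(D)$ et $\mathscr{C}^\bullet_{\psi, \gamma'}(D)$, on part de ce que $\psi$ est un inverse \`a gauche de $\varphi$, d'o\`u l'identit\'e $\psi \circ (\varphi - 1) = -(\psi - 1)$ sur $D$; on v\'erifie alors que
\[ f = \big( \mathrm{id}_D,\ (u, v) \mapsto (-\psi(u), v),\ -\psi \big) \colon \mathscr{C}^\bullet_{\varphi, \gamma'}(D) \longrightarrow \mathscr{C}^\bullet_{\psi, \gamma'}(D) \]
est un morphisme de complexes (les seules v\'erifications non imm\'ediates n'utilisent que l'identit\'e pr\'ec\'edente et la commutation de $\psi$ avec $\gamma'$), et l'on montre, comme dans le cas des coefficients \'etales trait\'e par Herr, que $f$ est un quasi-isomorphisme; cela se ram\`ene, via la surjectivit\'e de $\psi$ sur $D$ et l'\'etude du c\^one de $f$, \`a une propri\'et\'e de l'action de $\Gamma$ sur $D^{\psi = 0}$.

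Le point central, et de loin l'\'etape la plus d\'elicate, est la finitude. Gr\^ace \`a ce qui pr\'ec\`ede, on se ram\`ene au complexe $\mathscr{C}^\bullet_{\psi, \gamma'}(D)$, on choisit un mod\`ele $D^{]0, s]}$ sur $\Robba_A^{]0, s]}$, et l'on travaille, pour $0 < r < s$, avec les $A$-modules de Banach $D^{[r, s]}$. La cl\'e est que l'op\'erateur $\psi$, lu \`a travers l'isomorphisme $\widetilde{\varphi} \colon \varphi^*(D^{]0, s]}) \xrightarrow{\sim} D^{]0, s/p]}$ et le fait que $\Robba_A^{[r/p, s/p]}$ soit libre de rang fini sur $\varphi(\Robba_A^{[r, s]})$, se factorise, \`a un isomorphisme pr\`es, par la restriction d'une couronne ferm\'ee \`a une couronne strictement plus petite, qui est un op\'erateur compl\`etement continu de $A$-modules de Banach. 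La th\'eorie de Fredholm \`a base affino\"ide (Serre, Buzzard) s'applique alors au complexe de Koszul des op\'erateurs commutants $\psi - 1$ et $\gamma' - 1$ sur $D^{[r, s]}$ et montre que sa cohomologie est repr\'esent\'ee par un complexe parfait de $A$-modules, ind\'ependant de $r$ pour $r$ assez petit (la compacit\'e de $\varphi$ rendant les transitions en $r$ des isomorphismes sur la cohomologie). On en d\'eduit, en passant aux limites d\'efinissant $\Robba_A$, que $\mathscr{C}^\bullet_{\psi, \gamma'}(D)$ est quasi-isomorphe \`a un complexe parfait de $A$-modules; en particulier les $H^i_{\varphi, \gamma'}(D)$ sont de type fini sur $A$, et le formalisme des complexes parfaits donne aussit\^ot la compatibilit\'e au changement de base.

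Restent la dualit\'e locale et la formule d'Euler-Poincar\'e. Le cup-produit fournit un accouplement
\[ H^i_{\varphi, \gamma'}(\qp, D) \times H^{2 - i}_{\varphi, \gamma'}(\qp, D^*(\chi)) \longrightarrow H^2_{\varphi, \gamma'}(\qp, \Robba_A(\chi)) \cong A, \]
o\`u $D^*(\chi)$ est le dual de Tate de $D$ et la derni\`ere fl\`eche est une application trace de type r\'esidu. Cet accouplement est compatible aux suites exactes courtes de $(\varphi, \Gamma)$-modules, aux torsions par un caract\`ere et au changement de base; comme la perfection d'un accouplement de complexes parfaits se teste apr\`es sp\'ecialisation en les points ferm\'es de $\mathrm{Sp}(A)$, on se ram\`ene au cas o\`u $A$ est une extension finie de $L$, puis, via la filtration par les pentes de Kedlaya et des torsions ad\'equates, au cas \'etale, o\`u c'est la dualit\'e locale de Tate. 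De m\^eme, la caract\'eristique d'Euler-Poincar\'e de $\mathscr{C}^\bullet_{\varphi, \gamma'}(D)$ \'etant additive dans les suites exactes et invariante par torsion et changement de base, le calcul de sa valeur se ram\`ene au cas du rang $1$ sur un corps, o\`u l'on v\'erifie directement qu'elle vaut $-\,\mathrm{rang}_{\Robba_A} D$.
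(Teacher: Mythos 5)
Il faut d'abord noter que l'article ne d\'emontre pas cet \'enonc\'e~: c'est une citation directe de [KPX, Prop.~2.3.6 et Th.~4.4.2], sans aucune preuve, de sorte qu'il n'y a pas d'argument du texte auquel comparer le v\^otre; votre esquisse est donc \`a juger comme r\'esum\'e des arguments de la litt\'erature. Les premier et troisi\`eme points sont corrects et fid\`eles. Le morphisme $\big(\mathrm{id},\,(u,v)\mapsto(-\psi(u),v),\,-\psi\big)$ est bien un morphisme de complexes (les deux carr\'es commutent gr\^ace \`a $\psi\circ(\varphi-1)=-(\psi-1)$), et la r\'eduction du quasi-isomorphisme \`a l'inversibilit\'e de $\gamma'-1$ sur $D^{\psi=0}$ est exactement le contenu de [KPX, Th.~3.1.1 et Prop.~2.3.6]. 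De m\^eme, pour la dualit\'e et la formule d'Euler--Poincar\'e, la r\'eduction par sp\'ecialisation aux points ferm\'es (licite pour des complexes parfaits sur un affino\"ide, qui est un anneau de Jacobson), puis par la filtration par les pentes de Kedlaya, au cas \'etale de rang $1$, est la strat\'egie de Liu reprise par KPX.

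En revanche, l'\'etape de finitude --- qui est le c\oe{}ur du th\'eor\`eme et la raison pour laquelle la preuve de KPX est longue --- est trait\'ee trop rapidement et, telle quelle, ne constitue pas une d\'emonstration. L'op\'erateur $\psi$ envoie $D^{[r/p,\,s/p]}$ dans $D^{[r,s]}$ et n'est donc pas un endomorphisme d'un module de Banach fix\'e~: on ne peut pas former na\"ivement le complexe de Koszul de $\psi-1$ et $\gamma'-1$ sur $D^{[r,s]}$, et m\^eme en recollant les couronnes, $\psi-1$ n'est pas une perturbation compl\`etement continue d'un op\'erateur inversible (c'est $\psi$ compos\'e avec une restriction de couronnes qui est compl\`etement continu, pas $\psi-1$ lui-m\^eme). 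La th\'eorie de Fredholm \`a la Serre--Buzzard ne s'applique donc pas directement, et ce n'est d'ailleurs pas la route suivie par KPX, qui passent par la coadmissibilit\'e de la cohomologie d'Iwasawa de la d\'eformation cyclotomique sur l'alg\`ebre de distributions de $\Gamma$, le th\'eor\`eme de Liu sur un corps et des arguments de changement de base pour des complexes parfaits. Pour une esquisse honn\^ete, il faudrait soit renvoyer explicitement \`a cette cha\^ine d'arguments, soit expliquer comment contourner le fait que $\varphi-1$ et $\psi-1$ ne stabilisent pas les couronnes ferm\'ees.
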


\subsection{Cohomologie d'Iwasawa des $(\varphi, \Gamma)$-modules} \label{cohomIw}


Soit $\Lambda = \zp[[\Gamma]]$ l'algèbre d'Iwasawa de $\Gamma$. Si l'on choisit un isomorphisme $\Gamma = \Gamma^{\rm tors} \times \widetilde{\Gamma}$, où $\Gamma^{\rm tors}$ désigne la partie de torsion de $\Gamma$ et $\widetilde{\Gamma} \cong 1 + 2p \zp$ via le caractère cyclotomique, on obtient un isomorphisme $\Lambda \cong \zp[\Gamma^{\rm tors}] \otimes_\zp \zp[[\widetilde{\Gamma}]]$. Soit $\gamma$ un générateur topologique de $\widetilde{\Gamma}$ et notons $[\gamma]$ son image dans $\Lambda$. On obtient un isomorphisme $\Lambda \cong \zp[\Gamma^{\rm tors}] \otimes_\zp \zp[[T]]$ en envoyant $[\gamma]$ sur $1+T$.

Soit $\Lambda_\infty := \Robba^+(\Gamma)$ l'algèbre de distributions sur $\Gamma$. Précisément, on obtient $\Robba^+(\Gamma)$ en remplaçant la variable $T$ par $[\gamma] - 1$ dans la définition de $\Robba^+$. On peut de la sorte définir les anneaux $\Lambda_n = \Robba^{[r_n, +\infty]}(\Gamma)$ et on a $ \Lambda_\infty = \varprojlim_n \Lambda_n. $ Le choix de l'isomorphisme $\Lambda \cong \zp[\Gamma^{\rm tors}] \otimes_\zp \zp[[T]] $ fait que $\Lambda_n$ s'identifie aux fonction analytiques sur la boule $v_p(T) \geq r_n$ et $\Lambda_\infty$ à celui des fonctions analytiques sur la boule ouverte unité. Ce dernier espace s'identifie à l'anneau $H^0(\mathfrak{X}, \mathcal{O})$ des sections globales sur l'espace des poids $p$-adiques, et aussi, par le théorème d'Amice (cf. \cite[Th. 2.2]{S-T}) à l'espace des distributions $\mathscr{D}(\zpe, L)$ sur $\zpe$ à valeurs dans $L$ \footnote{Soit $\mathrm{LA}(\zpe, L) = \varinjlim_{n > 0} \mathrm{LA}_n(\zpe, L)$ l'espace des fonctions localement analytiques sur $\zpe$ à valeurs dans $L$, où $\mathrm{LA}_n(\zpe, L)$ dénote l'espace des fonctions continues $f \colon \zpe \to L$ admettant, pour tout $x \in \zpe$, un développent en séries des puissances autour $x$ convergeant sur la boule $B(x, n) = \{ z \in \cp : v_p(z - x) \geq n \}$. Chaque $\mathrm{LA}_n(\zpe, L)$, muni de la valuation $v_{\mathrm{LA}_n}(f) := \inf_{x \in \zpe} \inf_{z \in B(x, n)} v_p(f(z))$, est un espace de Banach, et l'on munit $\mathrm{LA}(\zpe, L)$ de la topologie de la limite inductive. L'espace des distributions $\mathscr{D}(\zpe, L)$ est défini comme le dual continu de $\mathrm{LA}(\zpe, L)$, muni de la topologie forte. Si $\mu \in \mathscr{D}(\zpe, L)$ et $\phi \in \mathrm{LA}(\zpe, L)$, on note $\int_\zpe \phi \cdot \mu$ l'évaluation de $\mu$ en $\phi$. Cf. \cite[\S I.4]{Colmezfoncdunevariablepadique} pour plus de détails. }

\subsubsection{Déformation cyclotomique} On note (cf. \cite[Def. 4.4.7]{KPX}) $\Lambda_n^\iota$ le module $\Lambda_n$ muni de l'action de $\Gamma$ via $\gamma(f) = [\gamma^{-1}] f$, $\gamma \in \Gamma$ et $f \in \Lambda_n$. On définit 
\[ \mathbf{Dfm} = \varprojlim_{n} \Robba \widehat{\otimes}_\qp  \Lambda_n^\iota = \varprojlim_{n} \varinjlim_{s > 0} \varprojlim_{r < s} \Robba^{[s, r]} \widehat{\otimes} \Lambda_n^\iota . \] Plus généralement, si $D$ est un $(\varphi, \Gamma)$-module sur $\Robba$, on définit sa déformation cyclotomique par
\[ \mathbf{Dfm}(D) = D \widehat{\otimes}_\qp \Lambda_\infty^\iota = \varprojlim_n \varinjlim_{s > 0} \varprojlim_{r < s} D^{[r, s]} \widehat{\otimes} \Lambda^\iota_n. \]
Les actions $\varphi$, $\psi$ et $\Gamma$ sont données par les formules 
\[ \varphi(x \otimes \lambda) = \varphi(x) \otimes \lambda, \;\;\; \psi(x \otimes \lambda) = \psi(x) \otimes \lambda, \;\;\; \gamma(x \otimes \lambda) = \gamma(x) \otimes [\gamma^{-1}] \lambda, \]
pour $x \in D$, $\lambda \in \Lambda_\infty$ et $\gamma \in \Gamma$. Le module $\mathbf{Dfm}(D)$ est un $(\varphi, \Gamma)$-module sur l'anneau $\Robba_\mathfrak{X} = \varprojlim_{n} \Robba_{\mathfrak{X}_n}$ de Robba relatif à l'espace des poids \footnote{Rappelons d'abord que $\Robba_{\mathfrak{X}_n} = \Robba \widehat{\otimes} \Lambda_n = \varinjlim_s \varprojlim_r \Robba^{[r, s]} \widehat{\otimes} \Lambda_n$ et que $\Gamma$ agit trivialement sur le deuxième facteur. Si $x \otimes \lambda \in D \widehat{\otimes} \Lambda_n^\iota$, $f \otimes \mu \in \Robba_{\mathfrak{X}_n} \cong \Robba \widehat{\otimes} \Lambda_n$ et $\gamma \in \Gamma$, alors on a
\begin{eqnarray*} \gamma((f \otimes \mu) (x \otimes \lambda)) &=& \gamma(f x \otimes \mu \lambda) = \gamma(fx) \otimes [\gamma^{-1}] \lambda \mu \\
&=& (\gamma(f) \otimes \mu) (\gamma(x) \otimes [\gamma^{-1}] \lambda) = \gamma(f \otimes \mu) \gamma(x \otimes \lambda),
\end{eqnarray*}
ce qui montre que l'action de $\Gamma$ est semi-linéaire, et donc $\mathbf{Dfm}(D)$ est bien un $(\varphi, \Gamma)$-module sur $\Robba_{\mathfrak{X}}$ comme défini dans \S \ref{phiGammarel}.}.

\subsubsection{Cohomologie d'Iwasawa analytique} On définit la cohomologie d'Iwasawa (analytique) de $D$ comme $$ H^i_{\rm Iw}(D) = H^i_{\psi, \gamma}(\mathbf{Dfm}(D)). $$ Ce sont, d'après la proposition \ref{KPX}, des $\Lambda_\infty$-modules de type fini.


Si $\eta \colon \Gamma \to L^\times$ est un caractère, le changement de base par rapport à $f_\eta \colon \Lambda_\infty \to L : [\gamma] \mapsto \eta^{-1}(\gamma)$ fournit un isomorphisme $$\mathbf{Dfm}(D) \otimes_{\Lambda_\infty, f_\eta} L \xrightarrow{\sim} D(\eta). $$Si $\mu \in H^i_{\rm Iw}(D)$, cet isomorphisme induit des morphismes de spécialisation $$ H^i_{\rm Iw}(D) \to H^i_{\psi, \gamma}(D(\eta)): \; \; \; \mu \mapsto \int_\Gamma \eta \cdot \mu, $$ la notation étant justifiée par l'interprétation classique de la cohomologie d'Iwasawa en termes des distributions \footnote{Rappelons que, si $V \in \mathrm{Rep}_L \mathscr{G}_\qp$ et si $D = \D_{\rm rig}(V) \in \Phi \Gamma^{\text{ét}}(\Robba)$ est le $(\varphi, \Gamma)$-module associé à $V$ par l'équivalence de catégories de Fontaine-Kedlaya, alors $H^i_{\rm Iw}(D) = H^i(\qp, \mathscr{D}(\Gamma, V))$ (cf. \cite[Cor. 4.4.11]{KPX} et \cite[Prop. II.1.8]{ColmezIw1}). Dans ce cas-là, on peut voir un élément dans $H^i_{\rm Iw}(D)$ comme un cocycle à valeurs dans $\mathscr{D}(\zpe, V)$ et, si $\eta \colon \Gamma \to L^\times$ est un caractère continu, on en déduit une application de spécialisation $H^i_{\rm Iw}(D) \to H^i(\qp, V(\eta)) \cong H^i(D(\eta))$ envoyant un cocycle $g \mapsto \mu_g$ vers le cocycle $g \mapsto \int_\Gamma \eta \cdot \mu_g$ (cf. \cite[\S II.1]{ColmezIw1} pour plus de détails).}

\subsubsection{Cohomologie d'Iwasawa et $(\varphi, \Gamma)$-modules} Si $D \in \Phi\Gamma(\Robba)$, on définit le complexe $\mathscr{C}^\bullet_\psi(D)$, concentré en $[1, 2]$, par $$ [D \xrightarrow{\psi - 1} D]. $$ Le complexe $\mathscr{C}^\bullet_{\psi}(D)$ appartient à $\D^{[0, 2]}_{\rm perf}(\Lambda_\infty)$ et calcule la cohomologie d'Iwasawa de $D$ (cf. \cite[Th. 4.4.8]{KPX}). On a, en particulier, un isomorphisme $$ \mathrm{Exp}^* \colon H^1_{\rm Iw}(\qp, D) \to D^{\psi = 1} $$ dont l'inverse est donnée par $ z \mapsto [\frac{p-1}{p} \log(\chi(\gamma)) (z \otimes 1), 0]$ \footnote{Si $p = 2$, il faut appliquer à $z$ le projecteur naturel sur le sous espace d'éléments $\Gamma'$-invariants. On évitera ce cas-ci, se traitant de la même manière mais avec une complication technique supplémentaire. }.

Si $z \in D^{\psi = 1}$, on note, afin d'alléger les notation, $\mu_z$ l'élément $(\mathrm{Exp}^*)^{-1}(z) \in H^1_{\rm Iw}(\qp, D)$. Si $n \geq 0$, on a (cf. \cite[\S VIII.1.3]{ColmezPhiGamma} ou \cite[\S 2.2.3, Eq. (6)]{Nakamura2}) la formule pour la spécialisation
\[ \int_{\Gamma} \eta \cdot \mu_z = [ \frac{p - 1}{p} \log(\gamma) (z \otimes e_\eta), 0] \in H^1_{\psi, \gamma}(D(\eta)). \]

Terminons en mentionnant que $\eta$ induit un automorphisme sur $\Lambda_\infty$ donné par $\eta([\gamma]) = \eta(\gamma)^{-1} \cdot [\gamma]$, ce qui induit un isomorphisme de $(\varphi, \Gamma)$-modules $\mathbf{Dfm}(D) \cong \mathbf{Dfm}(D(\eta))$, donné par $x \otimes [\gamma] \mapsto (x \otimes e_\eta) \otimes \eta(\gamma)^{-1} [\gamma]$, et donc un isomorphisme de $\Lambda_\infty$-modules \[ H^i_{\rm Iw}(\qp, D) \to H^i_{\rm Iw}(\qp, D(\eta)): \;\;\; \mu \mapsto \mu \otimes e_\eta. \] On a, par exemple, $$ \int_\Gamma \eta \cdot \mu = \int_\Gamma 1 \cdot (\mu \otimes e_\eta). $$

\subsection{Dual de Tate et résidus} \label{dual1}

Soit $\mathscr{A} \in \{ \E, \Robba \}$, soit $D \in \Phi \Gamma(\mathscr{A})$ et notons $\omega_D = (\det_D) \chi^{-1}$. On définit le dual de Tate de $D$ comme $\check{D} = \mathrm{Hom}_{\varphi, \Gamma}(D, \mathscr{A} \frac{dT}{1 + T})$, où $\mathscr{A} \frac{dT}{1 + T}$ est le $(\varphi, \Gamma)$-module étale libre de rang $1$ de base $\frac{dT}{1 + T}$ sur lequel $\varphi$ et $\Gamma$ agissent par les formules $\gamma (\frac{dT}{1 + T}) = \chi(\gamma) \frac{dT}{1 + T}$ si $\gamma \in \Gamma$, $\varphi(\frac{dT}{1 + T}) = \frac{dT}{1 + T} $. On note \[ \langle \; , \; \rangle \colon \check{D} \times D \to \mathscr{A} \frac{dT}{1+T} \] l'accouplement naturel. Si $D$ est de dimension $2$, on a une identification $\check{D} = D \otimes \omega_D^{-1}$. 

Si $f =\sum_{n \in \Z} a_n T^n \in \mathscr{A}$, on pose $\mathrm{r\acute{\mathrm{e}}s}_0(f dT) = a_{-1}$. Si $x \in \check{D}$ et $y \in D$, la formule $$ \{x,y\} = \mathrm{r\acute{\mathrm{e}}s}_0(\langle \sigma_{-1} x, y \rangle) $$ définit un accouplement parfait $ \{ \; , \; \}: \check{D} \times D \to L$ identifiant $\check{D}$ au dual topologique $D^*$ de $D$. Si $D$ est de dimension $2$, on a, sous l'identification $\check{D} = D \otimes \omega_D^{-1}$, la formule \footnote{On voit ici l'élément $e_{\omega_D^{-1}}$ comme $e_{\det_D}^{-1} dT$, de sorte que, si $\sigma_{-1}(x) \wedge y = f \cdot e_{\det_D}$, alors $(\sigma_{-1}(x) \wedge y) \otimes e_{\omega_D^{-1}} = f dT$.} \[ \{ x \otimes e_{\omega_D^{-1}}, y \} = \mathrm{r\acute{\mathrm{e}}s}_0((\sigma_{-1}(x) \wedge y) \otimes e_{\omega_D^{-1}}), \] et on note $[x, y] := \{ x \otimes e_{\omega_D^{-1}}, y \}$.

\subsection{Dualité locale}

Considérons, d'ici jusqu'à la fin de cette section, $D \in \Phi \Gamma^{\text{ét}}(\E)$ et $D_{\rm rig} \in \Phi \Gamma^{\text{ét}}(\Robba)$ correspondant à $D$ par l'équivalence de catégories de Kedlaya entre $\Phi \Gamma^{\text{ét}}(\E)$ et $\Phi \Gamma^{\text{ét}}(\Robba)$. Les accouplements introduits ci-dessous seront utilisés dans la dernière partie de cet article. Pour une référence pour tout ce qui suit, cf. \cite{ColmezMirabolique} et \cite{ColmezPhiGamma}.
Notons $\gamma = \gamma_0$ dans la suite. Notons, pour $i \in \{ 0, 1\}$, $$ \cup: H^i_{\varphi, \gamma}(\check{D}_{(\rm rig)}) \times H^{2- i}_{\varphi, \gamma}(D_{(\rm rig)}) \to L $$ l'accouplement local. Un calcul avec le complexe de Herr et l'isomorphisme de Fontaine nous permet d'exprimer l'accouplement local en termes des $(\varphi, \Gamma)$-modules:

\begin{lemme} [{\cite[\S VIII.1.3]{ColmezPhiGamma}}] \label{lemmeIwacc} Soient $\check{z} \in \check{D}_{(\rm rig)}^{\psi = 1}$, $z \in D_{(\rm rig)}^{\psi = 1}$ et $\eta: \Gamma \to L^\times$ est un caractère continu. Alors
\[  (\int_{\Gamma} \eta^{-1} \cdot \mu_{\check{z}}) \cup (\int_{\Gamma} \eta \cdot \mu_{z}) = \{ (1 - \varphi) \check{z}, - \frac{p - 1}{p} \frac{\log(\gamma)}{\eta(\gamma) \gamma - 1} (1 - \varphi) z \}. \]
\end{lemme}

\subsection{Applications exponentielles} \label{appsexp}

Si $D \in \Phi\Gamma(\Robba)$ est de Rham et $n \geq 0$, on note $$ \exp_{D} \colon \DdR(D) \to \H^1_{\varphi, \gamma_n}(D), $$ $$ \exp^*_{D} \colon \H^1_{\varphi, \gamma_n}(D) \otimes \DdR(D), $$ les applications exponentielle et exponentielle duale de Bloch-Kato comme définies dans \cite[\S 2.3, \S 2.4.]{Nakamura}. Quand cela ne pose pas de problèmes, on omettra les indices dans les notations des applications exponentielles.

\subsubsection{Description de l'application exponentielle} Explicitement (\cite[Lem. 2.12(1)]{Nakamura}), si $x \in \DdR(D) = (\D_{\mathrm{dif}}(D))^\Gamma$, alors il existe $m$ tel que $x \in \D_{\mathrm{dif}, m}(D)$, et il existe aussi $\tilde{x} \in D^{]0, r_m]}[1/t]$ tel que $\varphi^{-k}(\tilde{x}) - x \in \D_{\mathrm{dif}, k}^+(D)$ pour tout $k \geq m$. Alors on a $$ \exp_{D}(x) = [(\gamma - 1) \tilde{x}, (\varphi - 1) \tilde{x} ] \in H^1_{\varphi, \gamma}(D). $$ On remarque que l'application $\exp_D$ est nulle sur $\mathrm{Fil}^0 \, \DdR(D)$.

\subsubsection{Description de l'application exponentielle duale} Si $M$ est un module muni d'une action de $\Gamma$, et $\gamma' \in \{ \gamma_n: n \geq 0 \}$, on pose \footnote{Comme précédemment, $M' = M^{\Gamma'}$ si $\gamma' = \gamma_0$ et $M' = M$ autrement.} $$ C^\bullet_{\gamma'}(M) = [M' \xrightarrow{\gamma' - 1} M'], $$ et on définit les groupes de cohomologie $H^i_{\gamma'}(M) = H^i(C^\bullet_{\gamma'}(M))$. Par exemple, si $D \in \Phi\Gamma(\Robba)$ et $n \geq 0$, alors $$ H^0_{\gamma_n}(\Ddif(D)) = \Ddif(D)^{\Gamma_n} = L_n \otimes \D_{\rm dR}(D). $$

Si $D \in \Phi\Gamma(\Robba)$ est de Rham et $n \geq 0$, on a un isomorphisme $L_\infty((t)) \otimes_{L_n} (L_n \otimes \DdR(D)) \cong \D_{\mathrm{dif}}(D)$ et l'application $L_n \otimes \DdR(D) \to H^1_{\gamma_n}(\D_{\mathrm{dif}}(D))$ qui envoie $x \in L_n \otimes \DdR(D)$ vers la classe de cohomologie $[\log \chi(\gamma_n) (1 \otimes x)] \in H^1_{\gamma_n}(\D_{\mathrm{dif}}(D))$ est un isomorphisme (cf. \cite{Nakamura}, juste avant le lemme 2.14). De plus, on a une application naturelle $H^1_{\varphi, \gamma_n}(D) \to H^1_{\gamma_n}(\Ddifp(D)) \to H^1_{\gamma_n}(\Ddif(D))$ définie par $[(x, y)] \mapsto [\varphi^{-m} x]$, où $m \gg 0$ et $[ \cdot ]$ dénote la classe de cohomologie correspondante.
On définit $$ \exp^*_{D} \colon H^1_{\varphi, \gamma_0}(D) \to \DdR(D) $$ comme la composition $H^1_{\varphi, \gamma_0}(D) \to H^1_{\gamma_0}(\Ddifp(D)) \to H^1_{\gamma_0}(\Ddif(D))  \xrightarrow{\sim} \DdR(D)$. Remarquons que, par construction, l'image de $\exp^*_{D}$ tombe dans $\mathrm{Fil}^0 \, \DdR(D)$.

Les applications exponentielle et exponentielle duale sont l'une l'adjointe de l'autre par rapport à la dualité de Tate et à la dualité de Poincaré. Plus précisément, on note
\[ \{ \; , \; \}_{\rm dR} \colon \DdR(\check{D}) \times \DdR(D) \to L \]
l'accouplement induit par la dualité entre $\check{D}_{(\rm rig)}$ et $D_{(\rm rig)}$ composé avec la trace de Tate normalisée. On a alors le résultat suivant.

\begin{lemme} [{\cite[Prop. 2.16]{Nakamura}}] \label{adjonction} Soit $D \in \Phi \Gamma(\Robba)$ de de Rham et soient $x \in \DdR(\check{D})$, $y \in H^1_{\varphi, \gamma}(D)$. Alors
\[ \exp_{\check{D}}(x) \cup y = \{ x , \exp^*_D(y) \}_{\rm dR}. \]
\end{lemme}


\subsection{Lois de réciprocité} \label{loisrec}

On rappelle à continuation les formules permettant d'exprimer l'image par les applications exponentielle et exponentielle duale des différentes spécialisations d'un élément dans la cohomologie d'Iwasawa en termes des $(\varphi, \Gamma)$-modules, normalement connues sous le nom de `lois de réciprocité explicite'.

Soit $D \in \Phi \Gamma(\Robba)$ de Rham. Si $z \in D^{\psi = 1}$, le diagramme commutatif
\[ \xymatrixcolsep{4pc} \xymatrix{
    D^{]0, r_{n+1}]} \ar[d]_{\psi} \ar[r]^{\varphi^{-(n+1)}} & L_{n+1}((t)) \otimes \DdR(D) \ar[d]^{p^{-1} \mathrm{Tr}_{L_{n+1} / L_n}} \\
    D^{]0, r_n]} \ar[r]^{\varphi^{-n}} & L_n((t)) \otimes \DdR(D), }
    \]
où $n \geq m(D)$, montre que les valeurs 
\[ p^{-m} \, \mathrm{Tr}_{L_m / L}([\varphi^{-m} z]_0) \in \DdR(D) \]
ne dépendent pas de $m$ assez grand, où, pour $\sum_{n \in \Z} a_n t^n \in \DdR(D) \otimes L_n((t))$, l'on note $[\sum_{n \in \Z} a_n t^n]_0 = a_0$.

\begin{proposition} \label{loirecexpd}
Soit $h \in \Z$ tel que $\mathrm{Fil}^{-h} \, \DdR(D) = \DdR(D)$ et soient $z \in D^{\psi = 1}$ et $j \geq -h$. On a alors l'égalité suivante dans $\DdR(D(-j))$: 
\[ \exp^* (\int_\Gamma \chi^{-j} \cdot \mu_z) = \lim_{m \to +\infty} p^{-m} \, \mathrm{Tr}_{L_m / L}([\varphi^{-m} (z \otimes e_{-j})]_0). \]
\end{proposition}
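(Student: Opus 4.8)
The plan is to reduce to the case $j=0$ by twisting, and then to compute both sides directly from the explicit descriptions of $\exp^*$ and of the specialisation map.

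\emph{Reduction to $j=0$.} A routine twisting argument reduces the statement to the case $j=0$. The isomorphism $H^1_{\rm Iw}(\qp,D)\xrightarrow{\sim}H^1_{\rm Iw}(\qp,D(\chi^{-j}))$, $\mu\mapsto\mu\otimes e_{-j}$ of \S\ref{cohomIw}, together with $\int_\Gamma\eta\cdot\mu=\int_\Gamma 1\cdot(\mu\otimes e_\eta)$ and the compatibility of $\mathrm{Exp}^*$ with the twist, gives $\int_\Gamma\chi^{-j}\cdot\mu_z=\int_\Gamma 1\cdot\mu_{z\otimes e_{-j}}$; this is compatible with the localisation maps $\varphi^{-m}$ and with the identification $\DdR(D(-j))\cong\DdR(D)$, the element $\varphi^{-m}(e_{-j})$ being expressed through $t$ and the de Rham basis of $\DdR(L(\chi^{-j}))$. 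The hypothesis $j\ge -h$ is exactly what keeps the twisted module $D(-j)$ in the de Rham range where $\exp^*$, rather than $\exp$, is the pertinent comparison map (so that $\exp$ vanishes on $\DdR$). Thus it suffices to prove: for $D$ de Rham with $\mathrm{Fil}^0\DdR(D)=\DdR(D)$ and $z\in D^{\psi=1}$,
\[ \exp^*\Big(\int_\Gamma 1\cdot\mu_z\Big)=\lim_{m\to+\infty}p^{-m}\,\mathrm{Tr}_{L_m/L}\big([\varphi^{-m}z]_0\big). \]

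\emph{Unwinding both sides.} By \S\ref{cohomIw}, $\int_\Gamma 1\cdot\mu_z$ is the image in $H^1_{\psi,\gamma}(D)$ of the Iwasawa class $(\mathrm{Exp}^*)^{-1}(z)$, represented by an explicit cocycle built from $\tfrac{p-1}{p}\log\chi(\gamma)\cdot z$ in $\mathscr{C}^\bullet_{\psi,\gamma}(D)$. Transporting it to $\mathscr{C}^\bullet_{\varphi,\gamma}(D)$ via the quasi-isomorphism of Proposition \ref{KPX} — which on the $\psi$-invariant part is governed by the operator $1-\varphi\colon D^{\psi=1}\to D^{\psi=0}$ — produces an explicit $\varphi$-cocycle. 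I would feed it into the description of $\exp^*$ recalled in \S\ref{appsexp}: apply $\varphi^{-m}$ for $m\gg0$ to land in $H^1_{\gamma}(\Ddifp(D))\to H^1_{\gamma}(\Ddif(D))$, then use the isomorphism $H^1_{\gamma}(\Ddif(D))\xrightarrow{\sim}\DdR(D)$, which divides out $\log\chi(\gamma)$ modulo $(\gamma-1)\Ddif(D)$. Since $\gamma-1$ acts invertibly on $t^n\otimes\DdR(D)$ for $n\ne0$ and the image of $\Ddif^+(D)$ carries only nonnegative $t$-degrees, this just reads off the coefficient of $t^0$. The upshot is that the left-hand side equals $\tfrac1{\log\chi(\gamma)}$ times the $t^0$-coefficient of $\varphi^{-m}$ applied to the first component of that $\varphi$-cocycle.

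\emph{Matching the two sides; the main obstacle.} The key input is the commutative square preceding the Proposition: $\varphi^{-m}$ intertwines $\psi$ on $D^{]0,r_m]}$ with $p^{-1}\mathrm{Tr}_{L_{m+1}/L_m}$ on $\Ddif$, which is exactly why the right-hand side is independent of large $m$. Using $\psi z=z$, the term $\varphi^{-m}\big((1-\varphi)z\big)$ telescopes into the $\varphi^{-i}z$ and their partial traces and, modulo $(\gamma-1)\Ddif(D)$ and after applying $\mathrm{Tr}_{L_m/L}$, the contributions of the $t$-degrees on which $\gamma-1$ is invertible drop out, the operator dividing by $\gamma-1$ disappears, and everything collapses to $p^{-m}\mathrm{Tr}_{L_m/L}([\varphi^{-m}z]_0)$; the normalising constant $\tfrac{p-1}{p}\log\chi(\gamma)$ cancels against the constants in $\mathrm{Exp}^*$ and in the isomorphism $H^1_{\gamma}(\Ddif)\xrightarrow{\sim}\DdR$. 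I expect the delicate point to be precisely this bookkeeping — transferring the class faithfully from the $\psi$-complex to the $\varphi$-complex, and tracking the operator $(\gamma-1)^{-1}$ and the powers of $p$ so that they cancel cleanly; once the algebra is arranged, the only analytic input required is the stabilisation of the traces, furnished by that commutative square. One may cross-check the identity via Lemme \ref{adjonction}, $\{x,\exp^*_D(\int_\Gamma 1\cdot\mu_z)\}_{\rm dR}=\exp_{\check D}(x)\cup(\int_\Gamma 1\cdot\mu_z)$ for $x\in\DdR(\check D)$, evaluating the right-hand cup product by the Iwasawa pairing of Lemme \ref{lemmeIwacc}; but this runs into the same $\tfrac{\log\gamma}{\gamma-1}$-type operator and, unless $\DdR(\check D)$ is exhausted by $\exp$-images of Iwasawa classes, does not by itself close the argument.
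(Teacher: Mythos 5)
Le texte ne d\'emontre pas cette proposition : la \og preuve \fg{} du papier est la seule ligne \emph{Voir \cite[Th. IV.2.1]{ColmezIw2} ou \cite[Th. II.6]{Berger03} ou \cite[Th. 3.10(2)]{Nakamura}}, de sorte qu'il n'y a pas d'argument interne auquel comparer le v\^otre. Votre plan suit bien la strat\'egie de ces r\'ef\'erences : r\'eduction au cas non tordu via $\mu\mapsto\mu\otimes e_{-j}$ et l'identification $\DdR(D(-j))=\DdR(D)\otimes t^{j}e_{-j}$, repr\'esentation de $\int_\Gamma 1\cdot\mu_z$ par le cocycle explicite $[\tfrac{p-1}{p}\log\chi(\gamma)\,z,0]$ de \S\ref{cohomIw}, description de $\exp^*$ comme compos\'ee $H^1_{\varphi,\gamma}(D)\to H^1_{\gamma}(\Ddif(D))\cong\DdR(D)$, et stabilisation des traces fournie par le carr\'e commutatif reliant $\psi$ et $p^{-1}\mathrm{Tr}_{L_{m+1}/L_m}$. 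Comme feuille de route, c'est exact.

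Deux r\'eserves, dont la premi\`ere est s\'erieuse. (a) L'\'etape que vous diff\'erez --- transf\'erer fid\`element la classe du complexe $\mathscr{C}^\bullet_{\psi,\gamma}$ vers $\mathscr{C}^\bullet_{\varphi,\gamma}$ puis constater que \og tout se r\'eduit \fg{} \`a $p^{-m}\mathrm{Tr}_{L_m/L}([\varphi^{-m}z]_0)$ --- n'est pas de la comptabilit\'e : c'est tout le contenu du th\'eor\`eme. Concr\`etement, pour appliquer la recette $[(x,y)]\mapsto[\varphi^{-m}x]$ de \S\ref{appsexp} il faut d'abord exhiber un $\varphi$-cocycle $(x,y)$, avec $(\gamma-1)x=(\varphi-1)y$, dont l'image $(-\psi(x),y)$ soit cohomologue \`a $(\tfrac{p-1}{p}\log\chi(\gamma)z,0)$ ; le candidat naturel $x=-\tfrac{p-1}{p}\log\chi(\gamma)z$ oblige \`a r\'esoudre $(\varphi-1)y=(\gamma-1)x$ dans $D$, et c'est le contr\^ole de $y$ et des termes correctifs modulo $(\gamma-1)\Ddif(D)$ apr\`es localisation qui constitue l'argument t\'elescopique de Colmez et Berger. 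Votre r\'edaction affirme la conclusion de ce calcul au lieu de l'effectuer. (b) Votre r\'eduction ram\`ene tout au cas $\mathrm{Fil}^0\,\DdR=\DdR$ et $j=0$ ; or, avec les conventions du texte, $\mathrm{Fil}^{-h}\,\DdR(D)=\DdR(D)$ signifie que les poids de Hodge--Tate sont $\leq h$, donc $\mathrm{Fil}^0\,\DdR(D(-j))=\DdR(D(-j))$ n'est garanti que pour $j\geq h$ et non pour tout $j\geq-h$ : la plage interm\'ediaire $-h\leq j<h$ n'est pas couverte par l'\'enonc\'e r\'eduit et demande un argument suppl\'ementaire (ou une forme plus g\'en\'erale du cas $j=0$). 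En l'\'etat, la proposition est donc un r\'esum\'e correct des preuves cit\'ees plut\^ot qu'une d\'emonstration autonome.
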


\begin{proof}
Voir \cite[Th. IV.2.1]{ColmezIw2} ou \cite[Th. II.6]{Berger03} ou \cite[Th. 3.10(2)]{Nakamura}.
\end{proof}

\begin{remarque}
On a une identification $\DdR(D(-j)) = \DdR(D) \otimes \mathbf{e}^{\rm dR}_{-j}$, où $\mathbf{e}^{\rm dR}_{-j} = t^j e_{-j}$ est l'élément défini dans l'introduction (cf. aussi \S \ref{basesdR} plus tard). Observons que $\varphi^{-m} (z \otimes e_{-j}) = \varphi^{-m} z \otimes e_{-j}$ et que, si $\varphi^{-n} z = \sum_{l \geq 0} a_l t^l$, $a_l \in L_m \otimes \DdR(D)$, alors \[ [\varphi^{-m} z \otimes e_{-j}]_0 = a_j \otimes t^j e_{-j} = a_j \otimes \mathbf{e}^{\rm dR}_{-j} \in L_m \otimes \DdR(D) \otimes \mathbf{e}^{\rm dR}_{-j}. \]
\end{remarque}

\begin{proposition} \label{loirecexp}
Supposons que $D$ est à poids de Hodge-Tate $0$ et $k \geq 0$ et soient $z \in D^{\psi = 1}$ et $j \geq 1$. On a alors l'égalité suivante dans $\DdR(D(j))$: 
\[ \exp^{-1}(\int_\Gamma \chi^j \cdot \mu_z) = (-1)^j (j-1)! \lim_{m \to +\infty} p^{-m} \, \mathrm{Tr}_{L_m / L} ( [\varphi^{-m} (\partial^{-j} z \otimes t^{-j} e_j)]_0). \]
\end{proposition}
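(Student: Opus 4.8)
The statement to prove (Proposition \ref{loirecexp}) is the reciprocity law for the Bloch--Kato exponential, which should be deduced from the already-stated reciprocity law for the dual exponential (Proposition \ref{loirecexpd}) by a duality argument. The idea is that $\exp$ and $\exp^*$ are adjoint to each other (Lemma \ref{adjonction}), and the Iwasawa pairing has an explicit expression in terms of $(\varphi, \Gamma)$-modules (Lemma \ref{lemmeIwacc}); combining these, a formula for one specialization (say $\exp^*$ on $\check D$) produces a formula for the adjoint specialization ($\exp^{-1}$ on $D$), at the cost of identifying the constants coming from the Iwasawa pairing and from the operator $\frac{\log(\gamma)}{\eta(\gamma)\gamma - 1}$.

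\textbf{First steps.} I would fix $z \in D^{\psi = 1}$ and $j \geq 1$, and let $\check z \in \check D^{\psi = 1}$ be an auxiliary class, writing $\mu_z, \mu_{\check z}$ for the corresponding Iwasawa classes via $\mathrm{Exp}^*$. By Lemma \ref{lemmeIwacc}, the cup product $(\int_\Gamma \chi^{-j} \cdot \mu_{\check z}) \cup (\int_\Gamma \chi^{j} \cdot \mu_z)$ equals an explicit bracket $\{(1 - \varphi)\check z, -\frac{p-1}{p}\frac{\log(\gamma)}{\chi^j(\gamma)\gamma - 1}(1-\varphi)z\}$. On the other hand, by Lemma \ref{adjonction} applied to $\check D$ (whose dual is $D$, twisted appropriately), this same cup product equals $\{\int_\Gamma \chi^{-j} \cdot \mu_{\check z}, \exp^*_{\check D}(\cdots)\}_{\rm dR}$ — but I want it the other way, so I would rather apply adjunction with the roles reversed: since $D$ is de Rham with Hodge--Tate weights $0$ and $k \geq 0$, on the twist $D(j)$ with $j \geq 1$ the weights are $-j \leq -1$ and $k - j$, so $\mathrm{Fil}^0 \DdR(D(j))$ may not be everything and $\exp$ on $D(j)$ is not injective on all of $H^1$; the point is that $\exp^{-1}$ makes sense on the image of the relevant specialization. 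I would express $\exp_{D(j)}(\exp^{-1}(\int_\Gamma \chi^j \cdot \mu_z)) \cup \exp^*_{\check D(-j)}(\text{something})$ and unwind both sides.

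\textbf{The computation.} Concretely, the cleanest route is probably to use Proposition \ref{loirecexpd} for the dual twist together with the adjunction. Apply \ref{loirecexpd} to $\check D$: for $\check z \in \check D^{\psi=1}$ and suitable exponent, $\exp^*(\int_\Gamma \chi^{-j}\cdot \mu_{\check z})$ is the limit of traces of $[\varphi^{-m}(\check z \otimes e_{-j})]_0$. Then Lemma \ref{adjonction} gives $\exp_{\check D(-j)}(x) \cup \mu = \{x, \exp^*_{D(j)}(\mu)\}_{\rm dR}$; choosing $\mu = \int_\Gamma \chi^j \cdot \mu_z$ and $x$ a generator of $\DdR(\check D(-j)) = \DdR(\check D)\otimes \mathbf e^{\rm dR}_{-j}$ lets me read off $\exp^*_{D(j)}(\int_\Gamma \chi^j\cdot \mu_z)$ — but I want $\exp^{-1}$, not $\exp^*$. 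The resolution: on $D(j)$ with $j \geq 1$, since $k \geq 0$, we have $k - j$ could be negative or nonnegative, but in the range where the value lands in the complement of $\mathrm{Fil}^0$, $\exp^*$ vanishes and one recovers $\exp^{-1}$ of the class instead by the exactness of the Bloch--Kato sequence $0 \to \DdR/\mathrm{Fil}^0 \xrightarrow{\exp} H^1_e \to \dots$. So I would split $\DdR(D(j))$ into $\mathrm{Fil}^0$ and its complement, observe that the specialization $\int_\Gamma \chi^j \cdot \mu_z$ lies in the image of $\exp$ (this is where $j \geq 1$ and the weight hypothesis enter), and apply the adjunction formula to the complementary piece.

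\textbf{Main obstacle.} The real work is bookkeeping the constant $(-1)^j(j-1)!$ and the appearance of $\partial^{-j}z$ rather than $z \otimes t^{-j}e_j$. The factor $(j-1)!$ is the classical $\Gamma$-factor that comes out of the operator $\frac{\log(\gamma)}{\chi^j(\gamma)\gamma - 1}$ in Lemma \ref{lemmeIwacc} when one specializes at $\chi^j$: near $\gamma = 1$ this operator develops a pole of order matching $j$, and its "value" involves the derivative, producing $(j-1)!$; similarly $\frac{p-1}{p}\log\chi(\gamma)$ combines with the $\mathrm{Exp}^*$ normalization. The twist by $t^{-j}$ interacts with the operator $\partial = \frac{1}{t}\nabla$ via $\varphi^{-m}\circ\partial = \frac{d}{dt}\circ\varphi^{-m}$ (stated in the excerpt), so that $[\varphi^{-m}(\partial^{-j}z \otimes t^{-j}e_j)]_0$ picks out, up to $(-1)^j (j-1)!$ and the $t^{-j}$-shift, the same coefficient as $[\varphi^{-m}(z)]_{j}$ does — matching the $a_j$ appearing in Proposition \ref{loirecexpd}. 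So the heart of the proof is the identity relating $\partial^{-j}$, the $t$-shift, and the $0$-th coefficient functional, combined with a careful limit argument (using the $\psi$-compatibility diagram already in the excerpt) to see the traces stabilize. I expect the sign and factorial tracking, and the precise meaning of $\partial^{-j}$ on $D^{\psi=1}$ (which requires $z$ to lie in the image of $\partial^j$, or equivalently working in $\Nrig(D)$ where $\partial$ is defined), to be the delicate points; everything else is a formal consequence of Lemmas \ref{lemmeIwacc}, \ref{adjonction} and Proposition \ref{loirecexpd}.
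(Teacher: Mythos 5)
The paper does not actually prove this proposition: its ``proof'' is a citation of \cite[Th. II.3]{Berger03} and \cite[Th. 3.10(1)]{Nakamura}, where the formula is established by a direct computation --- one writes down an explicit lift $\tilde{x}$ built from $\partial^{-j}z\otimes t^{-j}e_j$, checks that $[(\gamma-1)\tilde{x},(\varphi-1)\tilde{x}]$ is the cocycle $\int_\Gamma\chi^j\cdot\mu_z$ given by the normalisation of $\mathrm{Exp}^*$, and reads off $\exp^{-1}$ from the explicit description of $\exp_D$ recalled in \S\ref{appsexp}. Your route --- deducing the $\exp$-formula from the $\exp^*$-formula (Prop. \ref{loirecexpd}) via the adjunction (Lemma \ref{adjonction}) and the explicit Iwasawa pairing (Lemmas \ref{lemmeIwacc}, \ref{accIwphiGamma}) --- is therefore genuinely different from what the paper relies on. It is a recognised strategy (it is essentially Lemma \ref{lemme26} run backwards), and the setting is favourable: for $j\geq 1$ the weights of $D(j)$ are $j$ and $k+j\geq 1$, so $\mathrm{Fil}^0\,\DdR(D(j))=0$, $\exp_{D(j)}$ is injective, and your worry about splitting off $\mathrm{Fil}^0$ --- together with the confused claim that ``where $\exp^*$ vanishes one recovers $\exp^{-1}$'' --- is unnecessary; the correct and sufficient move is the adjunction in the form $x\cup\exp_{D(j)}(w)=\{\exp^*_{\check{D}(-j)}(x),w\}_{\rm dR}$, giving $\{\exp^*(\int_\Gamma\chi^{-j}\cdot\mu_{\check{z}}),\exp^{-1}(\int_\Gamma\chi^j\cdot\mu_z)\}_{\rm dR}=\int_\Gamma\chi^j\cdot\langle(1-\varphi)\check{z},(1-\varphi)z\rangle_{\rm Iw}$.

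The genuine gap is what comes next. To extract the claimed formula you must evaluate the right-hand side $\{(1-\varphi)\check{z},-\tfrac{p-1}{p}\tfrac{\log\gamma}{\chi^j(\gamma)\gamma-1}(1-\varphi)z\}$ explicitly as $\lim_m p^{-m}\mathrm{Tr}_{L_m/L}$ of the de Rham pairing of $[\varphi^{-m}\check{z}]_j$ against $(-1)^j(j-1)!\,[\varphi^{-m}\partial^{-j}z]_0$, and this residue computation --- relating $\mathrm{r\acute{e}s}_0$ of a product in the $(\varphi,\Gamma)$-module to constant terms of localisations, through the operator $\frac{\log\gamma}{\chi^j(\gamma)\gamma-1}$ --- is the analytic core of the explicit reciprocity law. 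None of the lemmas you invoke supplies it: without it your argument only reproves Lemma \ref{lemme26} and is circular. Your heuristic for the constant is also off: $\frac{\log\gamma}{\chi^j(\gamma)\gamma-1}$ has no pole at $\gamma=1$ for $j\neq 0$; the factor $(-1)^j(j-1)!$ arises from comparing $\partial^{-j}$ with multiplication by $t^{-j}$ (equivalently from $\nabla(\nabla-1)\cdots(\nabla-j+1)$ versus $t^j\partial^j$), as in the remark following the proposition. Finally, to pin down $\exp^{-1}(\int_\Gamma\chi^j\cdot\mu_z)$ from its pairings you need the classes $\exp^*(\int_\Gamma\chi^{-j}\cdot\mu_{\check{z}})$, $\check{z}\in\check{D}^{\psi=1}$, to span $\DdR(\check{D}(-j))$; this is true but requires a surjectivity argument for the specialisation maps that you do not mention.
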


\begin{proof}
Voir \cite[Th. II.3]{Berger03} ou \cite[Th. 3.10(1)]{Nakamura}.
\end{proof}

\begin{remarque} 
On a une identification $\DdR(D(j)) = \DdR(D) \otimes \mathbf{e}^{\rm dR}_{j}$. Si $\varphi^{-m} (\partial^{-j} z) = \sum_{l \geq 0} b_l t^l$, $b_l \in L_m \otimes \DdR(D)$, on a \[ [ \varphi^{-m}(\partial^{-j} z \otimes t^{-j} e_j)]_0 = p^{mj} \, b_0 \otimes \mathbf{e}^{\rm dR}_j \in L_m \otimes \DdR(D) \otimes \mathbf{e}^{\rm dR}_j . \]
\end{remarque}

\subsection{Accouplement d'Iwasawa} \label{accIw} Soit $D \in \Phi \Gamma^{\text{ét}}(\E)$ et soient $\mathscr{C} = \mathscr{C}(D) = (1 - \varphi) D^{\psi = 1} \subseteq D^{\psi = 0}$ le c\oe{}ur de $D$ et $\check{\mathscr{C}} = (1 - \varphi) \check{D}^{\psi = 1}$ celui de $\check{D}$. Ce sont des sous-$\Lambda$-modules libres de rang $d$ et l'application naturelle $\E(\Gamma) \otimes_\Lambda \mathscr{C} \to D^{\psi = 0}$ (resp. $\E(\Gamma) \otimes_\Lambda \check{\mathscr{C}} \to \check{D}^{\psi = 0}$) est un isomorphisme (cf. \cite[Corollaire VI.1.3(ii)]{ColmezMirabolique}). Ces espaces sont les orthogonaux l'un de l'autre pour l'accouplement $\{ \; , \; \} \colon \check{D} \times D \to L$ défini dans \S \ref{dual1} (\cite[Lem. VI.1.1]{ColmezMirabolique}). Pour $x \in \check{\mathscr{C}}$, $y \in \mathscr{C}$, la formule $$ \langle x, y \rangle_{\mathrm{Iw}} = \lim_{n \to + \infty} \sum_{i \in (\Z / p^n \Z)^\times}\{ \frac{\tau_n(\gamma_n) \sigma_i}{\gamma_n - 1} \cdot x , y \} [\sigma_i]$$ définit un accouplement
\[ \langle \; , \; \rangle_{\rm Iw}: \check{\mathscr{C}} \times \mathscr{C} \to \Lambda \]
qui identifie $\check{\mathscr{C}}$ à $\mathrm{Hom}_\Lambda(\mathscr{C}, \Lambda)$ (\cite[Prop. VI.1.2]{ColmezMirabolique}).

L'accouplement défini ci-dessus est l'analogue de l'accouplement usuel dans la théorie d'Iwasawa $(\; , \; )_{\rm Iw} : H^1_{\rm Iw}(\qp, \check{V}) \times H^1_{\rm Iw}(\qp, V) \to \Lambda$ dans le sens que (\cite[Rem. VIII.1.5]{ColmezPhiGamma}) 
\[ ((\mathrm{Exp}^*)^{-1} z, (\mathrm{Exp}^*)^{-1} z')_{\rm Iw} = \langle (1 - \varphi)z, (1 - \varphi)z'\rangle_{\rm Iw} \]
pour tous $z \in \check{D}^{\psi = 1}$, $z' \in D^{\psi = 1}$ et où $\mathrm{Exp}^*: H^1_{\rm Iw}(\qp, V) \to D^{\psi = 1}$ dénote l'isomorphisme de Fontaine (\cite[Th. II.1.3]{ColmezIw2}).

Si $\eta \in \mathfrak{X}(L)$, l'application \footnote{Les éléments $e_\eta$ et $e_{\eta^{-1}}$ dénotent des bases des modules $L(\eta)$ $L(\eta^{-1})$, munies d'une action triviale de $\varphi$ et d'une action de $\Gamma$ donnée par $\sigma_a(e_{\eta}) = \eta(a) \cdot e_\eta$, $\sigma_a(e_{\eta^{-1}}) = \eta^{-1}(a) \cdot e_{\eta^{-1}}$, $a \in \zpe$. Ces éléments apparaîtront très souvent dans le texte.} $x \mapsto x \otimes e_\eta$ (resp. $x \mapsto x \otimes e_\eta^{-1}$) induit un isomorphisme de $\mathscr{C}(D)$ (resp. $\check{\mathscr{C}}(D)$) sur $\mathscr{C}(D \otimes \eta)$ (resp. $\check{\mathscr{C}}(D \otimes \eta))$ et on a (\cite[Prop. VI.1.4]{ColmezMirabolique})
\[ \langle x \otimes e_{\eta^{-1}}, y \otimes e_\eta \rangle_{\rm Iw} = m_{\eta^{-1}} \cdot \langle x, y \rangle_{\rm Iw}, \] où $m_{\eta^{-1}}$ dénote la `multiplication par $\eta^{-1}$' d'une mesure, i.e., si $\big( \sum_{i \in \Z / p^n \Z} \alpha_{n, i} [\sigma_i] \big)_{n \in \N}  \in \Lambda$, alors on a $m_{\eta^{-1}} \big( \sum_{i \in \Z / p^n \Z} \alpha_{n, i} [\sigma_i] \big)_{n \in \N} = \big( \sum_{i \in \Z / p^n \Z} \eta^{-1}(i) \alpha_{n, i} [\sigma_i] \big)_{n \in \N}$.

Notons $\mathscr{C}_{\rm rig} = (1 - \varphi) D_{\rm rig}^{\psi = 1}$ et  $\check{\mathscr{C}}_{\rm rig} = (1 - \varphi) \check{D}_{\rm rig}^{\psi = 1}$. On a des isomorphismes $\mathscr{C}_{\rm rig} \cong \Robba^+(\Gamma) \otimes_\Lambda \mathscr{C}$, $\mathscr{\check{C}}_{\rm rig} \cong \Robba^+(\Gamma) \otimes_\Lambda \mathscr{\check{C}}$ (\cite[Prop. V.1.18]{ColmezPhiGamma}), qui permettent d'étendre par linéarité l'accouplement d'Iwasawa en un accouplement 
$$ \langle \; , \; \rangle_{\rm Iw}: \mathscr{\check{C}} \times \mathscr{C} \to \Robba^+(\Gamma). $$


Le lemme suivant nous fournit une description des différentes spécialisations de l'accouplement d'Iwasawa en termes des $(\varphi, \Gamma)$-modules:

\begin{lemme} [{\cite[Corollaire VI.1.5]{ColmezMirabolique}}] \label{accIwphiGamma} Soient $x \in \check{\mathscr{C}}_{\rm rig}$, $y \in \mathscr{C}_{\rm rig}$ et $\eta \colon \Gamma \to L^\times$ un caractère continu. Alors
\[ \int_\Gamma \eta \cdot \langle x , y \rangle_{\rm Iw} = \{  x, - \frac{p - 1}{p} \frac{ \log(\gamma)}{\eta(\gamma) \gamma  - 1} y \}. \]
\end{lemme}


On aura besoin plus tard du lemme suivant:

\begin{lemme} \label{lemme26} Soient $\check{z} \in \check{D}_{\rm rig}^{\psi = 1}$, $z \in D_{\rm rig}^{\psi = 1}$ et $\eta: \Gamma \to L^\times$ un caractère localement constant. On a
\small{ \[ \int_\Gamma \eta^{-1} \chi^j \cdot \langle (1 - \varphi) \check{z}, (1 - \varphi) z \rangle_{\rm Iw} = \left\{ \begin{array}{l l} \{ \exp^*(\int_\Gamma \eta \chi^{-j} \cdot \mu_{\check{z}}), \exp^{-1} (\int_\Gamma \eta^{-1} \chi^j \cdot \mu_z ) \}_{\rm dR} & \quad \text{si $j \gg 0$} \\
\{ \exp^{-1}(\int_\Gamma \eta \chi^{-j} \cdot \mu_{\check{z}}), \exp^* (\int_\Gamma \eta^{-1} \chi^j \cdot \mu_z )\}_{\rm dR} & \quad \text{si $j \ll 0$}. \\
\end{array} \right. \]} \normalsize
\end{lemme}

\begin{proof}
Par le lemme \ref{accIwphiGamma}, on a
\[ \int_\Gamma \eta^{-1} \chi^j \cdot \langle (1 - \varphi) \check{z} , (1 - \varphi) z \rangle_{\rm Iw} = \{ (1 - \varphi) \check{z}, -\frac{p - 1}{p} \frac{ \log(\gamma)}{\eta^{-1}(\gamma) \chi(\gamma)^j \gamma  - 1} (1 - \varphi) z \}. \]
En utilisant le lemme \ref{lemmeIwacc}, on obtient
\[  \{ (1 - \varphi) \check{z}, -\frac{p - 1}{p} \frac{ \log(\gamma)}{\eta^{-1}(\gamma) \chi(\gamma)^j \gamma  - 1} (1 - \varphi) z \} = (\int_\Gamma \eta \chi^{-j} \cdot \mu_{\check{z}}) \cup (\int_\Gamma \eta^{-1} \chi^j \cdot \mu_z). \]
Or, si $j \ll 0$ (resp. $j \gg 0$), l'application exponentielle (resp. exponentielle duale) de Bloch-Kato pour $\check{D}(\eta \chi^{-j})$ est bijective. Si, par exemple, $j \ll 0$, ceci nous permet d'écrire
\begin{eqnarray*} (\int_\Gamma \eta \chi^{-j} \cdot \mu_{\check{z}}) \cup (\int_\Gamma \eta^{-1} \chi^j \cdot \mu_z) &=& \big( \exp_{\check{D}(\eta \chi^{-j})} (\exp^{-1}_{\check{D}(\eta \chi^{-j})} (\int_\Gamma \eta \chi^{-j} \cdot \mu_{\check{z}} )) \big) \cup \big( \int_\Gamma \eta^{-1} \chi^j \cdot \mu_z \big) \\
&=& \{ \exp^{-1}_{\check{D}(\eta \chi^{-j})} (\int_\Gamma \eta \chi^{-j} \cdot \mu_{\check{z}}), \exp^*_{D(\eta^{-1} \chi^j)} (\int_\Gamma \eta^{-1} \chi^j \cdot \mu_z ) \}_{\rm dR},
\end{eqnarray*}
où la dernière égalité suit de l'adjonction (lemme \ref{adjonction} appliqué à $D(\eta^{-1} \chi^j)$, dont le dual de Tate s'identifie à $\check{D}(\eta \chi^{-j})$) entre les applications exponentielles. Le cas $\j \gg 0$ se traitant de la même manière, ceci nous permet de conclure.
\end{proof}


\section{Correspondance de Langlands $p$-adique et équation fonctionnelle en dimension $2$} \label{equationfonctionnelle}

Nous nous inspirons des idées de Nakamura (\cite{Nakamura2}) et des techniques de changement de poids de Colmez (\cite{ColmezPoids}) pour démontrer, dans cette section, une équation fonctionnelle au niveau de la théorie d'Iwasawa d'un $(\varphi, \Gamma)$-module de rang $2$.

\subsection{Notations} \label{basesdeRham3} Soit $D \in \Phi\Gamma(\Robba)$ de dimension $2$, de Rham à poids de Hodge-Tate $0$ et $k$ que l'on suppose non triangulin \footnote{Rappelons qu'un $(\varphi, \Gamma)$-module sur $\Robba$ de rang $2$ est dit \textit{triangulin} si il est obtenu comme extension de deux $(\varphi, \Gamma)$-modules de rang $1$. D'après \cite{Kedlaya}, tout $(\varphi, \Gamma)$ module de rang $2$ sur $\Robba$ est étale à torsion près par un caractère ou triangulin. Comme on le verra dans le texte (cf. la preuve du théorème \ref{conjepsilon}), le théorème principal de cet article (théorème \ref{eqfonct1}) est équivalent au fait que l'isomorphisme proposé par Nakamura interpole l'isomorphisme de de Rham (cf. Conjecture \ref{eaeaconj}(5) ainsi que \S \ref{constructionisom}); pour le cas d'un $(\varphi, \Gamma)$-module triangullin, la conjecture, et donc nos résultats, sont connus (\cite[Corollay 3.12]{Nakamura3}, \cite[Corollary 3.10]{Nakamura2}) pour n'importe quel $(\varphi, \Gamma)$-module triangulin, ce qui fait que l'hypothèse de non triangularité ne soit pas vraiment restrictive.} et notons $\Delta = \Nrig(D)$, qui est à poids de Hodge-Tate tous nuls. Comme les poids de Hodge-Tate de $D$ sont $0$ et $k$, et ceux de $\Delta$ sont nuls, le caractère $\det_\Delta$ est localement constant et $\omega_D = \omega_\Delta x^k$ (et $\det_D = x^k \det_\Delta$). Notons $e_D = e_{\det_D}$.


\subsubsection{Bases et modules de de Rham} \label{basesdR} On aura besoin de jongler un peu avec des éléments habitant dans le module de de Rham des différents tordus de $D$ et de son dual de Tate et les identifications suivantes permettront de voir tous ces éléments dans $\DdR(D)$. Soit $\eta \colon \zpe \to L^\times$ un caractère localement constant (vu comme un caractère de $\qpe$ en posant $\eta(p) = 1$). L'élément $G(\eta) e_\eta \in L_\infty$ est fixé par l'action de $\Gamma$ et on a donc un isomorphisme $\DdR(\Robba(\eta)) = (L_\infty((t)) \cdot e_\eta)^\Gamma = L \cdot G(\eta) e_\eta$, ce qui nous fournit un générateur $\mathbf{e}^{\rm dR}_\eta = G(\eta) e_\eta$ de ce module. Si $j \in \Z$, on rappelle que l'on a noté $e_j = e_{\chi^j}$ une base du module $L(\chi^j)$, de sorte que $\mathbf{e}^{\rm dR}_j =  t^{-j} e_j$ est une base de $\DdR(\Robba(\chi^j))$. Notons \[ \mathbf{e}_{\eta, j} = e_\eta \otimes e_j, \] qui est une base de $L(\eta \chi^j)$. L'élément \[ \mathbf{e}^{\rm dR}_{\eta, j} =  \mathbf{e}^{\rm dR}_\eta \otimes \mathbf{e}^{\rm dR}_j = G(\eta) t^{-j} \cdot \mathbf{e}_{\eta, j} = G(\eta) e_\eta \otimes t^{-j} e_j \] constitue une base du module $\DdR(\Robba(\eta \chi^j))$.

Si $D \in \Phi\Gamma(\Robba)$ est de Rham, alors $D(\eta \chi^j)$ l'est aussi et on a, par ce qui précède, \[ \DdR(D(\eta \chi^j)) = \Ddif( D \otimes L(\eta \chi^j))^\Gamma = \DdR(D) \otimes L \cdot \mathbf{e}^{\rm dR}_{\eta, j}, \] de sorte que l'application $x \mapsto x \otimes \mathbf{e}^{\rm dR}_{\eta, j}$ induit un isomorphisme $\DdR(D) \xrightarrow{\sim} \DdR(D(\eta \chi^j))$.

\subsubsection{Bases et duaux} Enfin, on note $e_\eta^\vee = e_{\eta^{-1}}$, $e_j^\vee = e_{-j}$ les éléments duaux, respectivement, de $e_\eta$ et $e_j$, ainsi que \[ \mathbf{e}^{\rm dR, \vee}_{\eta, j} = G(\eta)^{-1} \cdot e_\eta^\vee \otimes t^j e_j^\vee, \] base du module $\DdR(\Robba(\eta^{-1} \chi^{-j}))$. L'application $x \mapsto x \otimes \mathbf{e}^{\rm dR, \vee}_{\eta, j}$ induit un isomorphisme $\DdR(D(\eta \chi^j)^*) \xrightarrow{\sim} \DdR(D)$ et on a $x \otimes \mathbf{e}^{\rm dR}_{\eta, j} \otimes \mathbf{e}^{\rm dR, \vee}_{\eta, j} = x$ pour tout $x \in \DdR(D)$.


Fixons une base $f_1, f_2$ de $\DdR(D)$ et notons $$ \langle \;,\; \rangle_{\rm dR} \colon \DdR(D) \times \DdR(D) \to L, $$ le produit scalaire défini par la formule $\langle a_1 f_1 + a_2 f_2, b_1 f_1 + b_2 f_2 \rangle_{\rm dR} = a_1 b_1 + a_2 b_2$.

L'isomorphisme $\wedge^2 D = (\Robba \frac{dT}{1 + T}) \otimes \omega_D$ induit un isomorphisme $\wedge^2 \DdR(D) = \DdR((\Robba \frac{dT}{1 + T}) \otimes \omega_D) = (t^{-k} L_\infty e_D)^\Gamma$. On définit $\Omega \in L_\infty$ par la formule $f_1 \wedge f_2 = (t^k \Omega)^{-1} e_{D}$, ce qui nous permet de fixer les bases $(t^k \Omega)^{-1} e_D$ et $t^k \Omega e_D^\vee$ du module $\DdR(\wedge^2 D)$ et de son dual. On fixe aussi les bases $\mathrm{e}^{\rm dR}_{\omega_D} = (t^{k-1} \Omega)^{-1} e_{\omega_D}$ et $\mathbf{e}^{\rm dR, \vee}_{\omega_D} = (t^{k-1} \Omega) e_{\omega_D}^\vee$ du module $\DdR(\Robba(\omega_D))$ et de son dual.


Afin d'alléger les notations dans les calculs futurs, notons, pour $\eta$ comme ci-dessus et $j \in \Z$, $$ \mathbf{e}_{\eta, j, \omega_D^\vee} = e_\eta \otimes e_j \otimes e_{\omega_D}^\vee, \; \; \; \mathbf{e}_{\eta, j} = e_\eta \otimes e_j, $$ bases de $L(\eta \chi^j \omega_D^{-1})$ et $L(\eta \chi^j)$ respectivement, et leurs duales $$ \mathbf{e}^\vee_{\eta, j, \omega_D^\vee} = e_\eta^\vee \otimes e_{-j} \otimes e_{\omega_D}, \; \; \; \mathbf{e}^\vee_{\eta, j} = e_\eta^\vee \otimes e_{-j}, $$ ainsi que des bases des module $\DdR(\Robba(\eta \chi^j \omega_D^{-1}))$ et $\DdR(\Robba(\eta \chi^j))$ $$ \mathbf{e}^{\rm dR}_{\eta, j, \omega_D^\vee} = G(\eta) e_\eta \otimes t^{-j} e_j \otimes t^{k-1} \Omega e_{\omega_D}^\vee = \mathbf{e}^{\rm dR}_\eta \otimes \mathbf{e}^{\rm dR}_j \otimes \mathbf{e}^{\rm dR, \vee}_{\omega_D}, $$ $$ \mathbf{e}^{\rm dR}_{\eta, j} = G(\eta) e_\eta \otimes t^{-j} e_j = \mathbf{e}^{\rm dR}_\eta \otimes \mathbf{e}^{\rm dR}_j $$ et leurs duales $$ \mathbf{e}^{\rm dR, \vee}_{\eta, j, \omega_D^\vee} = G(\eta)^{-1} e_\eta^\vee \otimes t^{j} e_{-j} \otimes (t^{k-1} \Omega)^{-1} e_{\omega_D} = \mathbf{e}^{\rm dR, \vee}_\eta \otimes \mathbf{e}^{\rm dR}_{-j} \otimes \mathbf{e}^{\rm dR, \vee}_{\omega_D^\vee}, $$ $$ \mathbf{e}^{\rm dR, \vee}_{\eta, j} = G(\eta)^{-1} e_\eta^\vee \otimes t^j e_{-j} = \mathbf{e}^{\rm dR, \vee}_\eta \otimes \mathbf{e}^{\rm dR}_{-j}, $$ et les variantes évidentes que l'on puisse imaginer.

Par exemple, si $\eta \colon \zpe \to L^\times$ est un caractère d'ordre fini, si $j \geq 0$ et si $x \in \DdR(\check{D}(\eta \chi^{-{\j}}))$, on écrira $x \otimes \mathbf{e}^{\rm dR, \vee}_{\eta, -j, \omega_D^\vee} \in \DdR(D)$ l'image de $x$ par l'isomorphisme $$ \DdR(\check{D}(\eta \chi^{-{\j}})) \xrightarrow{\sim} \DdR(D); \;\;\; x \mapsto x \otimes G(\eta)^{-1} e_\eta^\vee \otimes t^{-j} e_{j} \otimes (t^{k - 1} \Omega)^{-1} e_{\omega_D} $$ et de même, si $x \in \DdR(D(\eta^{-1} \chi^{\j}))$, on notera $x \otimes \mathbf{e}^{\rm dR, \vee}_{\eta^{-1}, j} \in \DdR(D)$ l'image de $x$ par l'isomorphisme $$ \DdR(D(\eta^{-1} \chi^j)) \xrightarrow{\sim} \DdR(D); \;\;\; x \mapsto x \otimes G(\eta^{-1})^{-1} e_{\eta^{-1}}^\vee \otimes t^j e_{-j}. $$

\begin{remarque} \leavevmode
\begin{itemize}
\item Les bases des modules de de Rham, vues comme des éléments dans $L_\infty((t)) \cdot e_{\xi}$ pour un certain caractère $\xi \colon \qpe \to L^\times$, héritent une action de l'opérateur $\varphi$ (y agissant sur $L_\infty((t))$ $L_\infty$-linéairement et via $\varphi(t) = p t$ et sur $e_\xi$ par $\varphi(e_\xi) = \xi(p) e_\xi$). On a, par exemple, \[ \varphi(\mathbf{e}^{\rm dR}_\eta) = \mathbf{e}^{\rm dR}_\eta, \;\;\; \varphi(\mathbf{e}^{\rm dR}_j) = p^{-j} \mathbf{e}^{\rm dR}_j, \] \[ \varphi(\mathbf{e}^{\rm dR}_{\eta, j, \omega_D^\vee}) = p^{-j + k - 1} \omega_D^{-1}(p) \cdot \mathbf{e}^{\rm dR}_{\eta, j,\omega_D^\vee} = p^{-j - 1} \omega_\Delta^{-1}(p) \cdot \mathbf{e}^{\rm dR}_{\eta, j,\omega_D^\vee}. \]
\item Il faut faire un peu d'attention et distinguer le caractère identité $x$ et le caractère cyclotomique $\chi = x |x|$. Les deux coïncident sur $\zpe$ mais $\chi(p) = 1$, tandis que le premier prend la valeur $p$. Par exemple, $\Gamma$ agit trivialement sur l'élément $e_j \otimes e_{x^j}^\vee$ mais $\varphi(e_j \otimes e_{x^j}^\vee) = p^{-j} \; e_j \otimes e_{x^j}^\vee$.
\end{itemize}
\end{remarque}

\subsection{Représentations lisses de $\mathrm{GL}_2(\qp)$}

Dans cette section, on énonce quelques résultats classiques de la théorie des représentations lisses de $\mathrm{GL}_2(\qp)$ dont on aura besoin dans la suite. La référence principale est \cite{BH}.

\subsubsection{Facteurs epsilon pour $\mathrm{GL}_1$} \label{replisses}

Commençons par rappeler la définition des facteurs locaux associés à un caractère. Soit $\eta \colon \qpe \to L^\times$ un caractère continu. On dit que $\eta$ est non ramifié si sa restriction à $\zpe$ est triviale et il est ramifié dans le cas contraire. On définit son conducteur par $1$ s'il est non ramifié, et par $p^n$, où $n$ est le plus petit entier tel que la restriction $\eta \mid_{1 + p^n \zp}$ soit triviale, dans le cas contraire. Le choix d'un système compatible $(\zeta_{p^n})_{n \geq 0}$ de racines de l'unité nous permet de fixer un caractère additif $\psi \in \mathrm{Hom}(\qp, L^\times_\infty)$ de niveau $0$ (i.e $\ker \psi= \zp$) par la formule $\psi(x) = \zeta_{p^n}^{p^n x}$ pour n'importe quel $n \geq - v_p(x)$. Fixons aussi $\mu$ la mesure de Haar sur $\qp$ telle que $\mu(\zp) = 1$. Soit $\mu^*$ une mesure de Haar de $\qpe$. Pour $\phi \in \mathrm{LC}_{\rm c}(\qp, L)$ une fonction localement constante à support compact dans $\qp$, la fonction \footnote{On fixe un isomorphisme $\overline{\Q}_p \cong \C$, de sorte que l'on puisse voir une extension finie $L$ de $\qp$ comme un sous-corps de $\C.$}
$$ \zeta(\phi, \eta, s) = \int_{\qpe} \phi(x) \eta(x) |x|^s \cdot d\mu^*(x) $$
converge pour $\mathrm{Re} \; s \gg 0$ et admet un prolongement analytique à $\C$ tout entier. Les facteurs $L$ et $\epsilon$ associés à $\eta$ sont donnés par les formules \footnote{ cf. \cite[\S 23.4]{BH} pour la première formule et \cite[\S 23.5 Th., Lem. 1]{BH} pour se ramener au cas du caractère $\psi$ de niveau zero pour la deuxième.} $$ L(\eta, s) = \left\{
  \begin{array}{cc}
    (1 - \eta(p) p^{-s})^{-1} & \quad \text{ si $\eta$ n'est pas ramifié} \\
    1 & \quad \text{si $\eta$ est ramifié}  \\
  \end{array}
\right. $$

$$ \epsilon(\eta, s) = \left\{
  \begin{array}{cc}
    1 & \quad \text{ si $\eta$ n'est pas ramifié} \\
    p^{-ns} \eta(p)^n G(\eta^{-1}) & \quad \text{si $\eta$ est ramifié}  \\
  \end{array}
\right. $$
Le facteur epsilon satisfait l'équation fonctionnelle $$ \epsilon(\eta, s) \epsilon(\eta^{-1}, 1 - s) = \eta(-1). $$ Enfin, on a une équation fonctionnelle pour tout $\phi \in \mathrm{LC}_{\rm c}(\qp, L)$, où les deux membres sont des polynômes en $p^{-s}$, $$ \frac{\zeta(\hat{\phi}, \eta^{-1}, 1 - s)}{L(\eta^{-1}, 1 - s)} = \epsilon(\eta, s) \frac{\zeta(\phi, \eta, s)}{L(\eta, s)}, $$ où $\hat{\phi} = \int_\qp \phi(y) \psi(xy) d \mu(y)$ dénote la transformée de Fourier de $\phi$. On notera dans la suite $\epsilon(\eta) := \epsilon(\eta, 1/2) = p^{-n/2} \eta(p)^n G(\eta^{-1})$. 
 
\subsubsection{Facteurs epsilon pour $\mathrm{GL}_2$} Soit $\pi$ une représentation lisse irréductible de $G = \mathrm{GL}_2(\qp)$ à coefficients dans $\C$ et notons $\check{\pi}$ sa contragrédiente. Notons $K = \mathrm{M}_2(\zp)$ et $A = \mathrm{M}_2(\qp)$, $\mathrm{LC}_{\rm c}(A, L)$ les fonctions localement constantes à support compact dans $A$ et $\mathfrak{C}(\pi)$ l'espace des coefficients de la représentation $\pi$: c'est le $L$ espace vectoriel engendré par les fonctions $g \in G \mapsto \langle \check{v}, g \cdot v \rangle$, $v \in \pi, \check{v} \in \check{\pi}$. Si $\eta \colon \qpe \to L^\times$ est un caractère localement constant, on note $\pi \otimes \eta$ \footnote{On voit $e_\eta$ comme un générateur du $L$ espace vectoriel de dimension $1$ que l'on munit d'une action de $G$ par la formule $g \cdot e_\eta = \eta(\det g) \; e_\eta$. On a un isomorphisme d'espaces vectoriels $v \mapsto c \otimes e_\eta$ de $\pi$ sur $\pi \otimes \eta$.} la tordue de $\pi$ par $\eta$: si $v \in \pi$ et $g \in G$ alors $g (v \otimes e_\eta) = \eta(\det g) (gv \otimes e_\eta)$. On suppose dans la suite que \textit{$\pi$ est une représentation supercuspidale} \footnote{Rappelons qu'une représentation lisse irréductible $\pi$ de $\mathrm{GL}_2(\qp)$ est dite supercuspidale si elle n'est pas une sous-représentation d'une induite parabolique ou, ce qui revient au même (\cite[\S 9.1 Prop.]{BH}), si son module de Jacquet $V_N := V / V(N)$, où $V$ dénote l'espace ambiant de $\pi$ et $V(N)$ est le sous espace engendré par les vecteurs de la forme $v - {\matrice 1 x 0 1} v$, $x \in \qp$, est nul.}.

Fixons $d\mu$ la mesure de Haar sur $A$ normalisée par $\mu(K) = 1$. La transformée de Fourier, définie par $$ \phi \in \mathrm{LC}_{\rm c}(A, L) \mapsto \hat{\phi}(g) = \int_A \phi(h) \psi(\mathrm{tr}(gh)) d \mu(h), $$ est alors auto-duale: $\hat{\hat{\phi}}(x) = \phi(-x)$. Enfin, si $f \in \mathfrak{C}(\pi)$, la formule $\check{f}(g) = f(g^{-1})$ définit un élément $\check{f}$ dans $\mathfrak{C}(\hat{\pi})$. La fonction $$ \zeta(\phi, f, s) = \int_G \phi(g) f(g) || \det g ||^s \cdot d\mu^*(g), \;\;\; \phi \in \mathrm{LC}_{\rm c}, f \in \mathfrak{C}(\pi), $$ où $\mu^*$ est une mesure de Haar sur $G$, converge pour $\mathrm{Re} \; s \gg 0$ et définit une fonction rationnelle en la variable $p^{-s}$ (\cite[\S 24.2 Th. 1]{BH}).

Il existe (\cite[\S 24.2 Th. 2 + Corollary]{BH}) une constante $\epsilon(\pi) \in L$ tel que, si l'on pose $\epsilon(\pi, s) = p^{- c(\pi) (s-1/2)} \epsilon(\pi), $ où $c(\pi)$ est le conducteur de $\pi$ \footnote{Le niveau $c(\pi)$ de $\pi$ est défini comme le plus petit entier $n$ tel que $\pi$ possède un élément fixe par les matrices de la forme $K_n = \{ {\matrice a b c d} \in K$, $c = d - 1 = 0$ mod $p^n \}$. On a $\dim_L \pi^{K_{c(\pi)}} = 1$ et on appelle $p^{c(\pi)}$ le \textit{conducteur} de la représentation $\pi$. }, alors pour tous $\phi \in \mathrm{LC}_{\rm c}(A, L)$ et $f \in \mathfrak{C}(\pi)$, on a une équation fonctionnelle $$ \zeta(\hat{\phi}, \check{f}, \frac{3}{2} - s) = \epsilon(\pi, s) \zeta(\phi, f, \frac{1}{2} + s). $$ Observons que, si $j \in \Z$, alors $\epsilon(\pi, j + 1/2) = p^{-c(\pi) j} \epsilon(\pi) = \epsilon(\pi \otimes | \cdot |^j)$. La fonction $\epsilon(\pi, s)$ est le facteur epsilon associé à la représentation $\pi$ et elle satisfait l'équation fonctionnelle $$ \epsilon(\pi, s) \epsilon(\check{\pi}, 1 - s) = \omega_\pi(-1), $$ où $\omega_\pi$ est le caractère central de la représentation $\pi$.

%

\subsubsection{Modèle de Kirillov et facteurs locaux} \label{Kirillovaa}

Rappelons qu'un modèle de Kirillov d'une représentation lisse $\pi$ de caractère central $\omega_\pi$ est une injection $B$-équivariante de $\pi$ dans l'espace $\mathrm{LC}_{\rm rc}(\qpe, L_\infty)$ des fonctions localement constantes sur $\qpe$ à support compact dans $\qp$ (i.e $\phi(p^{-n} \zpe) = 0$ pour tout $n \gg 0$) \footnote{$\mathrm{rc}$ dénote "relativement compact".}. L'action du Borel sur ce dernier espace est donnée par la formule $$ ({\matrice a b 0 d} \cdot \phi)(x) = \omega_\pi(d) \psi(bx/d) \phi(ax/d). $$ La décomposition de Bruhat $G = B \cup BwB$ montre que l'action de $G$ est déterminée par celle du Borel et par l'action de l'involution $w = {\matrice 0 1 1 0}$, et c'est un fait remarquable que ce dernier opérateur puisse être décrit en termes des facteurs locaux de la représentation, comme on le verra à continuation.

Si $\eta \colon \qpe \to L^\times$ est un caractère localement constant et $m \in \Z$, on définit une fonction $\xi_{\eta, m} \in \mathrm{LC}_{c}(\qpe, L_\infty)$ par la formule $$ \xi_{\eta, m}(x) = \left\{
  \begin{array}{ccc}
    \eta(x) & \quad \text{ si } v_p(x) = m \\
    0 & \quad \text{sinon}  \\
  \end{array}
\right. $$ 

\begin{prop} [{\cite[Th. 37.3]{BH}}] \label{BHeqfonct}
Soit $\eta \colon \qpe \to L^\times$ un caractère et soit $m \in \Z$. Alors $$ w \cdot \xi_{\eta, m} = \eta(-1) \epsilon(\pi \otimes \eta^{-1}) \, \xi_{\eta^{-1}\omega_\pi, -c(\pi \otimes \eta^{-1}) - m}. $$
\end{prop}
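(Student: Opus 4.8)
The plan is to reduce the statement to a \emph{local functional equation in the Kirillov model} and to evaluate that equation on the elementary functions $\xi_{\eta, m}$, whose twisted Mellin transforms are single monomials. First I would use that, since $\pi$ is supercuspidale, its Kirillov model $\mathcal{K}(\pi, \psi)$ is exactly the space of $\phi \in \mathrm{LC}_{\rm rc}(\qpe, L_\infty)$ vanishing near $0$ (equivalently, with compact support in $\qpe$), so that $\xi_{\eta, m}$ and $w \cdot \xi_{\eta, m}$ both lie in it; and that such a $\phi$ is determined by its twisted Mellin transforms $\Psi(\phi, \nu, s) = \int_{\qpe} \phi(x) \nu(x) |x|^{s - 1/2}\, d^\times x$ as $\nu$ runs over the continuous characters of $\qpe$ and $s$ over $\C$ (decompose $\qpe = p^{\Z} \times \zpe$: $\Psi(\phi, \nu, \cdot)$ is then a Laurent polynomial in $p^{-s}$ whose coefficients recover, by Fourier inversion over $\zpe$, the restrictions $\phi|_{p^j \zpe}$). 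A direct computation gives $\Psi(\xi_{\eta, m}, \nu, s) = 0$ unless $\nu\eta$ is trivial on $\zpe$, in which case $\Psi(\xi_{\eta, m}, \nu, s) = \mathrm{vol}(\zpe)\,(\eta\nu)(p)^m\, p^{-m(s - 1/2)}$, and likewise for $\xi_{\eta^{-1}\omega_\pi, M}$; so the theorem becomes, for each $\nu$, an identity between two monomials, whose content is the exponent of $p^{-s}$ and the leading constant.

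The key step is the $\mathrm{GL}_2 \times \mathrm{GL}_1$ local functional equation: for $\phi$ in the Kirillov model and $\nu$ a character of $\qpe$,
\[ \Psi\big(w \cdot \phi,\ \nu^{-1}\omega_\pi^{-1},\ 1 - s\big) = \nu(-1)\,\epsilon(\pi \otimes \nu, s, \psi)\, \Psi(\phi, \nu, s), \]
where the local constant is the $\epsilon$-factor $\epsilon(\pi \otimes \nu, s, \psi) = p^{-c(\pi \otimes \nu)(s - 1/2)}\epsilon(\pi \otimes \nu)$ --- the $\gamma$-factor reduces to $\epsilon$ because $\pi \otimes \nu$ is supercuspidale, all $L$-factors being trivial, exactly as the zeta integrals recalled above are polynomials in $p^{-s}$. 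This is the Kirillov-model form of the Godement--Jacquet functional equation recalled above: one applies that equation to a matrix coefficient $f \in \mathfrak{C}(\pi \otimes \nu)$ built from $\phi$ and a fixed vector of the contragredient, and to a test function in $\mathrm{LC}_{\rm c}(\mathrm{M}_2(\qp), L)$ concentrated near the orbit of the mirabolic subgroup, so that the $\mathrm{GL}_2$-integral unfolds to $\Psi(\phi, \nu, \cdot)$ while the self-dual Fourier transform on $\mathrm{M}_2(\qp)$ restricts, along that orbit, to the Fourier transform on $\qp$ realising the action of $w = \matrice 0 1 1 0$ on $\mathcal{K}(\pi, \psi)$; alternatively one invokes \cite[Th. 37.3]{BH} directly. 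The sign $\nu(-1)$ and the precise shape of the formula come from keeping track of the level-zero additive character $\psi$, the choice $w = \matrice 0 1 1 0$ rather than $\matrice 0 {-1} 1 0$, the Haar measures, and the identity $\epsilon(\sigma, s, \psi)\epsilon(\check\sigma, 1 - s, \psi) = \omega_\sigma(-1)$; compatibility with $w^2 = 1$ is a convenient consistency check.

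Finally I would put $\phi = \xi_{\eta, m}$ into this functional equation. Its Mellin transform is supported on the $\nu$ with $\nu|_{\zpe} = \eta^{-1}|_{\zpe}$; for such $\nu$ the twist $\pi \otimes \nu$ differs from $\pi \otimes \eta^{-1}$ by the unramified character $\nu\eta$, so, writing $c = c(\pi \otimes \eta^{-1})$, one has $c(\pi \otimes \nu) = c$ and $\epsilon(\pi \otimes \nu, s, \psi) = (\nu\eta)(p)^{c}\, p^{-c(s - 1/2)}\,\epsilon(\pi \otimes \eta^{-1})$, while $\nu(-1) = \eta^{-1}(-1) = \eta(-1)$ as $-1 \in \zpe$. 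Thus
\[ \Psi\big(w \cdot \xi_{\eta, m},\ \nu^{-1}\omega_\pi^{-1},\ 1 - s\big) = \eta(-1)\,\epsilon(\pi \otimes \eta^{-1})\,\mathrm{vol}(\zpe)\,(\eta\nu)(p)^{m + c}\, p^{-(m + c)(s - 1/2)}, \]
whereas the Mellin transform of $\xi_{\eta^{-1}\omega_\pi, M}$ at $(\nu^{-1}\omega_\pi^{-1}, 1 - s)$ is supported on exactly the same set of $\nu$ and equals $\mathrm{vol}(\zpe)\,(\eta\nu)(p)^{-M}\, p^{M(s - 1/2)}$. Matching the two monomials forces $M = -(m + c) = -c(\pi \otimes \eta^{-1}) - m$ and leading constant $\eta(-1)\epsilon(\pi \otimes \eta^{-1})$, so by injectivity of the Mellin transform one gets $w \cdot \xi_{\eta, m} = \eta(-1)\,\epsilon(\pi \otimes \eta^{-1})\, \xi_{\eta^{-1}\omega_\pi,\, -c(\pi \otimes \eta^{-1}) - m}$, which is the claim.

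The main difficulty is the second step: producing the Kirillov-model functional equation with the correct normalising constant, i.e. the unfolding of the Godement--Jacquet integral to the $\mathrm{GL}_1$ Rankin--Selberg integral along the mirabolic orbit and the matching of the two Fourier transforms, together with the mechanical but delicate bookkeeping of all the $\pm 1$ and power-of-$p$ factors. The first and third steps are elementary once it is available; and in fact the statement is literally \cite[Th. 37.3]{BH}, which one may simply quote.
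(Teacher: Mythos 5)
Your argument is correct, but it is organised quite differently from the paper's. The paper's proof is a three-line matrix computation: it quotes \cite[Th.\ 37.3]{BH} verbatim, which gives the action of $\matrice 0 1 {-1} 0$ on $\xi_{\eta,m}$, and then converts to $w=\matrice 0 1 1 0$ via the decomposition $\matrice 0 1 {-1} 0 = \matrice {-1} 0 0 {-1}\matrice {-1} 0 0 1 w$, using that the central matrix acts by $\omega_\pi(-1)$ and $\matrice{-1}001$ by $\phi\mapsto\phi(-x)$; the two $\omega_\pi(-1)$'s cancel and the surviving $\eta(-1)$ is exactly the discrepancy between the statement and \cite[Th.\ 37.3]{BH}. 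You instead assert the Kirillov-model functional equation directly for $w$, namely $\Psi(w\cdot\phi,\nu^{-1}\omega_\pi^{-1},1-s)=\nu(-1)\,\epsilon(\pi\otimes\nu,s,\psi)\,\Psi(\phi,\nu,s)$, and recover the formula by computing the (monomial) Mellin transforms of the $\xi$'s and invoking injectivity of the Mellin transform; this is correct (I checked that your $\nu(-1)$ is precisely what the conversion from $\matrice 0 1 {-1} 0$ to $w$ produces), and it has the merit of explaining \emph{why} $w\cdot\xi_{\eta,m}$ is again a single function of the form $\xi_{\nu,M}$ rather than taking that for granted. But note two things. First, the "main difficulty" you defer --- pinning down the constant $\nu(-1)$ in the $w$-form of the functional equation --- is not an unfolding of Godement--Jacquet at all: it is exactly the elementary matrix bookkeeping above, and it is the entire content of the paper's proof. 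Second, your closing remark that "the statement is literally \cite[Th.\ 37.3]{BH}, which one may simply quote" is not accurate: BH's theorem concerns $\matrice 0 1 {-1} 0$, and the factor $\eta(-1)$ in the proposition is precisely what the change of matrix contributes, so a bare citation would miss it.
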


\begin{proof}
La formule de \cite[Th. 37.3]{BH} donne \[ {\matrice 0 1 {-1} 0} \cdot \xi_{\eta, m} = \epsilon(\pi \circ \eta^{-1}) \; \xi_{\eta^{-1} \omega_\pi, -c(\pi \otimes \eta^{-1}) - m}. \]
Observons que ${\matrice 0 1 {-1} 0} = {\matrice {-1} 0 0 {-1}} {\matrice {-1} 0 0 1} w $, et que les matrices ${\matrice {-1} 0 0 {-1}}$ et ${\matrice {-1} 0 0 1}$ agissent, respectivement, via multiplication par $\omega_\pi(-1)$ et par $({\matrice {-1} 0 0 1} \cdot \phi)(x) = \phi(-x)$, d'où 
\begin{eqnarray*}
w \cdot \xi_{\eta, m} &=& \epsilon(\pi \circ \eta^{-1}) \; {\matrice {-1} 0 0 {-1}} {\matrice {-1} 0 0 1} \cdot \xi_{\eta^{-1} \omega_\pi, -c(\pi \otimes \eta^{-1}) - m} \\
&=& \epsilon(\pi \circ \eta^{-1}) \omega_\pi(-1) \eta(-1) \; {\matrice {-1} 0 0 {-1}} \cdot \xi_{\eta^{-1} \omega_\pi, -c(\pi \otimes \eta^{-1}) - m} \\
&=& \eta(-1) \epsilon(\pi \circ \eta^{-1}) \; \xi_{\eta^{-1} \omega_\pi, -c(\pi \otimes \eta^{-1})}.
\end{eqnarray*}
\end{proof}

\subsection{Autour de la correspondance de Langlands $p$-adique pour $\mathrm{GL}_2(\qp)$} \label{LLpadique}

Dans cette section, on rappelle certains résultats concernant la correspondance de Langlands $p$-adique pour $\mathrm{GL}_2(\qp)$, notamment, la théorie du modèle de Kirillov-Colmez et les techniques de `changement de poids' introduites dans \cite{ColmezPoids}, qui seront cruciales dans la preuve de notre résultat principal.

\subsubsection{La correspondance}

Rappelons brièvement la construction de la correspondance de Langlands $p$-adique (\cite{ColmezMirabolique}, \cite{ColmezPhiGamma}, \cite{ColmezPoids}). Soit $D \in \Phi\Gamma^{\text{ét}}(\mathscr{O}_\E)$ de dimension $2$. Rappelons que $D^\sharp$ désigne le plus grand sous-$\mathscr{O}_L[[T]]$-module compact de $D$ stable par $\psi$ et sur lequel $\psi$ est surjectif (\cite[\S II.4]{ColmezMirabolique}), et $D^\natural$ est le plus petit sous-$\mathscr{O}_L[[T]]$-module compact de $D$ stable par $\psi$ et engendrant $D$ (\cite[\S II.5]{ColmezMirabolique}). Si $D \in \Phi\Gamma^{\text{ét}}(\E)$ et $D_0 \subseteq D$ est un $\mathscr{O}_\E$-réseau stable par $\varphi$ et $\Gamma$, on pose $D^\sharp = L \otimes_{\mathscr{O}_L} D_0^\sharp$, $D^\natural = L \otimes_{\mathscr{O}_L} D_0^\natural$. Si $D$ est absolument irréductible de dimension $\geq 2$, on a (\cite[Cor. II.5.21]{ColmezMirabolique}) $D^\natural = D^\sharp$.

Soit $G = \mathrm{GL}_2(\qp)$. Rappelons (\cite[\S III.1.2]{ColmezMirabolique}) que, si $U \subseteq \zp$ est un ouvert compact, en traduisant les opérations de restriction d'une mesure, on peut définir des opérateurs de restriction $\mathrm{Res}_U$ sur $D$. Ces opérateurs nous fournissent un faisceau ${\matrice {\zp - \{0\}} \zp 0 1}$-équivariant $U \mapsto D \boxtimes U$ sur $\zp$, et on a $D = D \boxtimes \zp$, $D \boxtimes \zpe = D^{\psi = 0}$. Soient $w_D = m_{\omega_D^{-1}} \circ w_* \colon D \boxtimes \zpe \to D \boxtimes \zpe$, où $m_{\omega_D^{-1}}$ est la multiplication par $\omega_D^{-1}$, et $w_* \colon D \boxtimes \zpe \to D \boxtimes \zpe$ sont définies dans \cite[\S V.2.1 et \S V.1]{ColmezPhiGamma}. En s'inspirant des formules provenant de l'analyse $p$-adique, on construit (\cite[\S II.1]{ColmezPhiGamma}) un faisceau $G$-équivariant $U \mapsto D \boxtimes U$ sur $\PP^1(\qp)$ dont les sections globales sont données par $$ D \boxtimes \PP^1 = \{ (z_1, z_2) \in D \times D : w_D(\mathrm{Res}_\zpe(z_1)) = \mathrm{Res}_\zpe(z_2) \} $$ et les sections sur $\zp$ sont $D \boxtimes \zp = D$. On définit (\cite[\S II.2]{ColmezPhiGamma}) le module $D^\natural \boxtimes \PP^1 = \{ z \in D \boxtimes \PP^1: \mathrm{Res}_\zp( {\matrice {p^n} 0 0 1} \cdot z) \in D^\natural \;\;\; \forall n \in \Z \}$. Ce module est stable par l'action de $G$ (\cite[Th. II.3.1(i)]{ColmezPhiGamma}) et on pose $$ \Pi(D) = D \boxtimes \PP^1 / D^\natural \boxtimes \PP^1, $$ qui est une $L$-représentation de Banach admissible et topologiquement irréductible de $G$. La représentation $\Pi(D)$ est celle associée à $D$ par la correspondance de Langlands $p$-adique. L'accouplement $[\; , \;]$ sur $D$ s'étend (\cite[Lem. II.1.12]{ColmezPoids}) en un accouplement parfait et $G$-équivariant $[\; , \; ]_{\PP^1}$ sur $D \boxtimes \PP^1$ en posant
\[ [z, z']_{\PP^1} = [\mathrm{Res}_\zp(z), \mathrm{Res}_\zp (z')] + \omega_\Delta(-1) [\mathrm{Res}_{p \zp} (w \cdot z), \mathrm{Res}_{p \zp} (w \cdot z')], \]
pour lequel $\Pi(D)^* \otimes \omega_D$ est son propre orthogonal (\cite[\S 2.1]{ColmezPoids}). On a (\cite[Th. II.3.1(ii)]{ColmezPhiGamma}) une suite exacte de $G$-modules topologiques
\[ 0 \to \Pi(D)^* \otimes \omega_D \to D \boxtimes \PP^1 \to \Pi(D) \to 0. \]

Soient $D^\dagger \in \Phi \Gamma^{\text{ét}}(\E^\dagger)$ et $D_{\rm rig} = \Robba \otimes_{\E^\dagger} D^\dagger \in \Phi \Gamma^{\text{ét}} (\Robba)$ les modules correspondants à $D$ par l'équivalence de catégories de la proposition \ref{equivPhiGamma} (cf. \cite[\S V.1.1]{ColmezPhiGamma} pour des rappels des constructions). Le sous-module $D^\dagger \boxtimes \PP^1 = \{ (z_1, z_2) \in D \boxtimes \PP^1 : z_1, z_2 \in D^\dagger\}$ de $D \boxtimes \PP^1$ est stable par l'action de $G$ (\cite[Prop. V.2.8]{ColmezPhiGamma}) et l'action de $G$ s'étend par continuité (\cite[Prop. V.2.9]{ColmezPhiGamma}) en une action sur \[ D_{\rm rig} \boxtimes \PP^1 := \{ (z_1, z_2) \in D_{\rm rig} \times D_{\rm rig} : w_D(\mathrm{Res}_\zpe(z_1)) = \mathrm{Res}_\zpe(z_2) \}, \] où l'action de $w_D$ sur $D_{\rm rig} \boxtimes \zpe = D_{\rm rig}^{\psi = 0}$ est définie par $\Gamma$-antilinéarité en utilisant l'identité $D_{\rm rig} \boxtimes \zpe = \Robba(\Gamma) \otimes_{\E^\dagger(\Gamma)} D^\dagger \boxtimes \zpe$ . Ceci permet de définir un faisceau $G$-équivariant $U \mapsto D_{\rm rig} \boxtimes U$ sur $\PP^1(\qp)$ et on a (\cite[Th. V.2.20]{ColmezPhiGamma}) une suite exacte
\[ 0 \to (\Pi(D)^{\rm an})^* \otimes \omega_D \to D_{\rm rig} \boxtimes \PP^1 \to \Pi(D)^{\rm an} \to 0, \]
où $\Pi(D)^{\rm an}$ dénote les vecteurs localement analytiques de la représentation $\Pi(D)$, ce qui rend naturel de poser $\Pi(D_{\rm rig}) = \Pi(D)^{\rm an}$.

Plus généralement, si $D \in \Phi \Gamma(\Robba)$ n'est pas triangulin, il est isocline et il existe un caractère $\delta$ tel que $D(\delta)$ soit étale. On définit alors $U \mapsto D \boxtimes U$ et $\Pi(D)$ par torsion à partir de $D(\delta) \boxtimes U$ et $\Pi(D(\delta))$; en particulier $\Pi(D) = \Pi(D(\delta)) \otimes (\delta^{-1} \circ \det)$. On obtient ainsi une $L$-représentation localement analytique $\Pi = \Pi(D)$ de caractère central $\omega_D$ et une suite exacte comme ci-dessus.

\subsubsection{Le modèle de Kirillov, d'après Colmez} \label{Kirillov}

Décrivons le modèle de Kirillov construit dans \cite[\S 2.4.1-2]{ColmezPoids}. Soit $D \in \Phi \Gamma(\Robba)$ irréductible de rang $2$ et soit $\Pi = \Pi(D)$ la représentation de $G$ décrite dans la section précédente. Notons $B = {\matrice * * 0 *} \subseteq \mathrm{GL}_2(\qp)$ le Borel supérieur et soit $\mathscr{Y}$ un $L_\infty[[t]]$-module muni d'une action de l'algèbre de distributions $\mathscr{D}(\Gamma) = \mathscr{Robba}^+(\Gamma)$ de $\Gamma$. On définit $$ \mathrm{LA}_{\mathrm{rc}}(\qpe, \mathscr{Y})^\Gamma $$ comme l'espace des fonctions $\phi \colon \qpe \to \mathscr{Y}$ localement analytiques à support compact dans $\qp$, vérifiant $\sigma_a(\phi(x)) = \phi(ax)$ pour tout $a \in \zpe$. On munit $\mathrm{LA}_{\mathrm{rc}}(\qp, \mathscr{Y})^\Gamma$ d'une action de $B$ par la formule \footnote{Si $r \in \qp$ et $n \in \N$ est tel que $rp^n \in \zp$, on pose $[\epsilon^{r}] = \varphi^{-n}((1 + T)^{p^n r}) \in L_n[[t]]$.} $$ ({\matrice a b 0 d} \cdot \phi)(x) = \omega_D(d) [\epsilon^{bx / d}] \phi(ax/d). $$

Soit $Y$ un $B$-module de caractère central $\omega_D$. On dit que $Y$ admet un modèle de Kirillov s'il existe un $B$-module $\mathscr{Y}$ comme ci-dessus et une injection $B$-équivariante de $Y$  dans $\mathrm{LA}_{\mathrm{rc}}(\qpe, \mathscr{Y})^\Gamma$. Si $Y$ admet un modèle de Kirillov, on note $Y_{\rm c}$ l'image inverse dans $Y$ du sous-espace des fonctions $\phi \in \mathrm{LA}_{\mathrm{c}}(\qpe, \mathscr{Y})^\Gamma \subseteq \mathrm{LA}_{\mathrm{rc}}(\qpe, \mathscr{Y})^\Gamma$ dont le support est compact dans $\qpe$ ($\phi(p^n \zpe) = 0$ pour tout $|n| \gg 0$).

Rappelons que l'algèbre de Lie $\mathfrak{g} = \mathfrak{gl}_2$ de $G$ agit sur le module $D \boxtimes \PP^1$ (cf. \cite{Dospinescu}) et l'action de $u^+ = {\matrice 0 1 0 0} \in \mathfrak{g}$ sur $D$ est donnée par $ u^+ z = tz$. Notons $\Pi^{u^+-\mathrm{fini}}$ l'ensemble des $v \in \Pi$ tués par une puissance de $u^+$, qui en est un sous $B$-module. Si $v \in \Pi^{u^+-\mathrm{fini}}$ et $\tilde{v} \in D \boxtimes \PP^1$ en est un relèvement, la condition $v \in \Pi^{u^+-\mathrm{fini}}$ se traduit donc par l'existence de $N, k \geq 0$ tels que $({\matrice 1 {p^N} 0 1} - 1)^k \tilde{v} \in \Pi^* \otimes \omega_D$.

Notons $D_{\rm dif} = \Ddif(D)$, $D_{\rm dif}^+ = \Ddifp(D)$ et $D_{\rm dif}^- = D_{\rm dif} / D_{\rm dif}^+$ et rappelons que l'on dispose des applications de localisation $\varphi^{-n} \colon D^{]0, r_n]} \to D_{\rm dif}$. L'image de $\Pi^* \otimes \omega_D$ dans $D$ par l'application $\mathrm{Res}_\zp$ est incluse dans $D^{]0, r_{m(D)}]}$ (en effet, elle est incluse dans $D^{]0, r_a]}$ pour un certain $a$, car elle est l'image d'un Fréchet dans une limite inductive de Fréchets, et stable par $\psi$ car $\Pi$ l'est par ${\matrice {p^{-1}} 0 0 1}$). Ceci nous permet de poser, pour $n \geq m(D)$ et $N, k$ comme ci-dessus, 
\[ \varphi^{-n}(\mathrm{Res}_\zp( {\matrice {{p^j} a} 0 0 1} \tilde{v})) = \frac{1}{\varphi^{N + j - n}(\sigma_a(T))^k} \varphi^{-n}(\mathrm{Res}_\zp({\matrice {{p^j} a} 0 0 1} ({\matrice 1 {p^N} 0 1} - 1)^k \tilde{v})) \in t^{-k} D_{\mathrm{dif}, n}^+. \]
Si $x \in \qpe$, l'image de $\varphi^{-n}(\mathrm{Res}_\zp( {\matrice {{p^n} x} 0 0 1} \tilde{v}))$ dans $D_{\mathrm{dif}}^-$ ne dépend ni du choix de $\tilde{v}$ ni du choix de $n$ assez grand (cf. \cite[\S 2.4.2]{ColmezPoids}). On a donc une application bien définie $$ x \mapsto \mathscr{K}_v(x) \in \mathrm{LA}_{\rm rc}(\qpe, D_{\rm dif}^-)^\Gamma. $$


\begin{prop} [{\cite[Prop. 2.10]{ColmezPoids}}] L'application $\mathscr{K}_v$ ci-dessus définit un modèle de Kirillov pour $\Pi^{u^+ - \text{fini}}$. 
\end{prop}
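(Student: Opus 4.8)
Le plan est de montrer que $\mathscr{K} \colon v \mapsto \mathscr{K}_v$ est une injection $B$-\'equivariante de $\Pi^{u^+-\mathrm{fini}}$ dans $\mathrm{LA}_{\mathrm{rc}}(\qpe, D_{\rm dif}^-)^\Gamma$. Comme on sait d\'ej\`a que $\mathscr{K}_v$ est bien d\'efinie et tombe dans cet espace, il reste deux points: (i) la $B$-\'equivariance, pour l'action $({\matrice a b 0 d}\cdot\phi)(x) = \omega_D(d)[\varepsilon^{bx/d}]\phi(ax/d)$ sur le but; (ii) l'injectivit\'e. Dans les deux cas je partirais de l'\'ecriture $\mathscr{K}_v(x) = \varphi^{-n}(\mathrm{Res}_\zp({\matrice {p^n x} 0 0 1}\tilde v)) \bmod D_{\rm dif}^+$, valable pour $n \gg 0$ et tout rel\`evement $\tilde v \in D_{\rm rig}\boxtimes\PP^1$ de $v$.

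Pour la $B$-\'equivariance, j'utiliserais que $B$ est engendr\'e par les matrices unipotentes ${\matrice 1 b 0 1}$ et par le tore, avec ${\matrice a 0 0 d} = {\matrice d 0 0 d}\,{\matrice {a/d} 0 0 1}$: la matrice scalaire ${\matrice d 0 0 d}$ agit sur $\Pi$ par le caract\`ere central $\omega_D$ (donc par $\phi \mapsto \omega_D(d)\phi$ sur les fonctions), la matrice ${\matrice u 0 0 1}$ pour $u \in \zpe$ donne la $\Gamma$-\'equivariance $\sigma_u\mathscr{K}_v(x) = \mathscr{K}_v(ux)$ d\'ej\`a acquise, et ${\matrice p 0 0 1}$ induit sur $\mathscr{K}_v$ le simple translat\'e $x \mapsto \mathscr{K}_v(px)$ (les deux membres r\'esultent de la m\^eme formule, \`a un d\'ecalage de $n$ pr\`es). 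Ne reste donc que le cas unipotent: l'identit\'e ${\matrice {p^n x} 0 0 1}{\matrice 1 b 0 1} = {\matrice 1 {p^n bx} 0 1}{\matrice {p^n x} 0 0 1}$, le fait que ${\matrice 1 c 0 1}$ agit sur $D_{\rm rig}$ par multiplication par $(1+T)^c$ (et commute \`a $\mathrm{Res}_\zp$) d\`es que $c \in \zp$, et l'\'egalit\'e $\varphi^{-n}((1+T)^{p^n r}) = [\varepsilon^{r}]$ donnent $\mathscr{K}_{{\matrice 1 b 0 1}v}(x) = [\varepsilon^{bx}]\mathscr{K}_v(x)$, comme voulu. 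Ce point ne repose que sur les relations de commutation entre l'action de $G$ sur $D_{\rm rig}\boxtimes\PP^1$, les op\'erateurs $\mathrm{Res}_U$ et les localisations $\varphi^{-n}$ de \cite{ColmezPhiGamma}, \cite{ColmezPoids}, et est donc essentiellement formel.

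L'injectivit\'e est le point d\'elicat. Si $\mathscr{K}_v = 0$, l'hypoth\`ese signifie que $\varphi^{-n}(\mathrm{Res}_\zp({\matrice {p^m a} 0 0 1}\tilde v)) \in D_{\mathrm{dif},n}^+$ pour tous $m \in \Z$, $a \in \zpe$ et $n \gg 0$; comme ${\matrice {p^m a} 0 0 1}$ parcourt le sous-groupe diagonal $\{{\matrice x 0 0 1} : x \in \qpe\}$, cela contr\^ole $\mathrm{Res}_\zp(\tilde v)$ ainsi que toutes ses torsions par les $\sigma_a$ et tous ses it\'er\'es par $\psi$. On voudrait en d\'eduire, gr\^ace \`a la caract\'erisation de $(\Pi^{\rm an})^*\otimes\omega_D$ \`a l'int\'erieur de $D_{\rm rig}\boxtimes\PP^1$ (condition de type $D^\natural$, traduite en conditions sur les $\varphi^{-n}(\cdot)\bmod D_{\rm dif}^+$ \`a tous les niveaux, analogue faisceautique de la formule $D = \{x \in \Nrig(D)[1/t] : \varphi^{-n}(x) \in \mathrm{Fil}^0(L_n((t))\otimes\DdR(D))\ \forall n \gg 0\}$ rappel\'ee plus haut), que $\mathrm{Res}_\zp(\tilde v)$ provient d\'ej\`a de la structure enti\`ere, puis, en utilisant la $G$-\'equivariance (l'action de $w$) pour contr\^oler la composante de $\tilde v$ sur $\PP^1\setminus\zp$, que $\tilde v \in (\Pi^{\rm an})^*\otimes\omega_D$, c'est-\`a-dire $v = 0$. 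C'est cette derni\`ere implication qui concentre la difficult\'e: elle demande une description assez pr\'ecise du sous-module $(\Pi^{\rm an})^*\otimes\omega_D$, et surtout le fait que, pour un vecteur $u^+$-fini, l'information port\'ee par $\mathscr{K}_v$ sur $\zpe$ d\'etermine aussi le comportement en $\infty$ (c'est pr\'ecis\'ement l\`a qu'intervient le passage de $\Pi^{\rm an}$ \`a $\Pi^{u^+-\mathrm{fini}}$, via l'action de $u^+ = {\matrice 0 1 0 0}$). Les v\'erifications restantes (bonne d\'efinition, appartenance au but, $B$-\'equivariance) ne sont que des compatibilit\'es entre les structures d\'ej\`a disponibles.
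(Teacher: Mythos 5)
The paper itself gives no proof of this proposition: it is quoted verbatim from \cite[Prop. 2.10]{ColmezPoids}, so there is no internal argument to compare yours against; your attempt has to be judged on its own. The formal half is fine. Your verification of the $B$-equivariance --- reduction to the center, to $\matrice u 0 0 1$ with $u \in \zpe$, to $\matrice p 0 0 1$ and to the unipotent matrices, then the commutation $\matrice {p^n x} 0 0 1 \matrice 1 b 0 1 = \matrice 1 {p^n bx} 0 1 \matrice {p^n x} 0 0 1$ combined with $\varphi^{-n}((1+T)^{p^n r}) = [\epsilon^r]$ --- is exactly the computation required, and you are entitled to take the well-definedness of $\mathscr{K}_v$ and its membership in $\mathrm{LA}_{\rm rc}(\qpe, D_{\rm dif}^-)^\Gamma$ from the paragraph preceding the statement.

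The gap is the injectivity, which is the actual content of the proposition and which you explicitly leave open (``on voudrait en d\'eduire\dots'', ``c'est cette derni\`ere implication qui concentre la difficult\'e''). Two things are missing there. First, $v=0$ in $\Pi$ means $\tilde v \in \Pi^*\otimes\omega_D$, and $\Pi^*\otimes\omega_D$ is cut out inside $D_{\rm rig}\boxtimes\PP^1$ by a condition of type $D^\natural$ (boundedness of the $\mathrm{Res}_\zp(\matrice {p^m} 0 0 1 \cdot)$), not by the de Rham integrality condition $\varphi^{-n}(\cdot)\in D^+_{{\rm dif},n}$ that the vanishing of $\mathscr{K}_v$ gives you; passing from the latter to the former is a genuine step (this is where the $u^+$-finiteness, i.e. the fact that a power of $t$ kills the class of $\mathrm{Res}_\zp(\tilde v)$, and the fine structure of $\Pi^*\otimes\omega_D$ are actually used), and you only gesture at it by analogy with the formula for $D$ inside $\Nrig(D)[1/t]$. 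Second, the localizations $\varphi^{-n}(\mathrm{Res}_\zp(\matrice {p^m a} 0 0 1 \tilde v))$ only see the restriction of $\tilde v$ to $\qp = \PP^1\smallsetminus\{\infty\}$, so even granting the first step you still have to rule out a contribution supported at $\infty$; you name the tool ($G$-equivariance and the action of $w$) but do not carry out the argument. As it stands the proposal proves that $\mathscr{K}$ is a $B$-equivariant map, not that it is a Kirillov model.
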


\subsubsection{Dualité et modèle de Kirillov} \label{dualKir} Le choix d'un isomorphisme $\wedge^2 D = (\Robba \frac{dT}{1 + T}) \otimes \omega_D$ induit un isomorphisme $\wedge^2 D_{\rm dif} = (L_\infty((t)) dt) \otimes \omega_D$  de $L_\infty((t))$-espaces vectoriels munis d'une action de $\Gamma$ \footnote{L'action de $\Gamma$ sur $dt$ est donnée par $\sigma_a(dt) = a \, dt$.}. On note 
\[ [\;,\;]_{\rm dif} \colon D_{\rm dif} \times D_{\rm dif} \to L \]
l'accouplement défini par la formule $$ [x, y]_{\rm dif} = \text{rés}_L( (\sigma_{-1} (x) \wedge y) \otimes e_{\omega_D}^\vee ), $$ où $\text{rés}_L \colon L_\infty((t)) dt \to L$ est défini comme la composée de l'application résidu $L_\infty((t)) \to L_\infty : \sum_{n \in \Z} a_n t^n \mapsto a_{-1}$ et la trace de Tate normalisée $T_L := \lim_{n \to +\infty} \frac{1}{[L_n:L]} \mathrm{Tr}_{L_n / L} \colon L_\infty \to L$. L'accouplement $[\;,\;]_{\rm dif}$ satisfait $[\sigma_a x, \sigma_a y]_{\rm dif} = \omega_D(a) [x, y]_{\rm dif}$ et on a donc aussi 
\begin{equation} \label{accdif}
[x, y]_{\rm dif} = \omega_D(-1) \text{rés}_L( (x  \wedge \sigma_{-1} (y)) \otimes e_{\omega_D}^\vee ).
\end{equation}
Comme $[D_{\rm dif}^+, D_{\rm dif}^+]_{\rm dif} = 0$, l'accouplement $[\;,\;]_{\rm dif}$ induit un accouplement $$ [\;,\;]_{\rm dif} \colon D_{\rm dif}^+ \times D_{\rm dif}^- \to L. $$

Si $\phi \in \mathrm{LA}_{\rm c}(\qpe, D_{\rm dif}^-)^\Gamma$, $z \in \Pi^* \otimes \omega_D$ et $N \gg 0$, la formule (cf. \cite[\S 2.4.3]{ColmezPoids}) 
\begin{equation} \label{formulemagique} [z, \phi] = \sum_{i \in \Z} \omega_D(p^{-i}) [\varphi^{-N} \big( \mathrm{Res}_\zp \big( {\matrice {p^{i + N}} 0 0 1} z \big) \big), \phi(p^i)]_{\rm dif}
\end{equation} est bien définie pour $N \geq m(D)$, ne dépend pas du choix $N$ et définit une forme linéaire continue (car la somme est finie, $\phi$ étant à support compact) sur $\Pi^* \otimes \omega_D$ et fournit donc un plongement $ \iota \colon \mathrm{LA}_{\rm c}(\qpe, D_{\rm dif}^-)^\Gamma \to \Pi $ caractérisé par $$ [z, \iota(\phi)]_{\PP^1} = [z, \phi] $$ pour tout $z \in \Pi^* \otimes \omega_D$ (cf. \cite[\S 2.4.3]{ColmezPoids}).

\begin{prop} [{\cite[Prop. $2.13$]{ColmezPoids}}]
L'image de $\iota$ est incluse dans $\Pi^{u^+-\text{fini}}$ et la composition \[ \mathscr{K} \circ \iota \colon \mathrm{LA}_{\rm c}(\qpe, D_{\rm dif}^-)^\Gamma \to \mathrm{LA}_{\rm rc}(\qpe, D_{\rm dif}^-)^\Gamma \] est l'inclusion naturelle.
\end{prop}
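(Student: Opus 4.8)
The plan is to prove the two assertions separately. \emph{First, $u^+$-finiteness.} I would record that $\iota$ is $B$-equivariant: this is immediate from the $G$-equivariance of $[\;,\;]_{\PP^1}$ and the $B$-equivariance of the pairing $(z,\phi)\mapsto[z,\phi]$ of \eqref{formulemagique}, the latter checked by substituting $({\matrice a b 0 d}\cdot\phi)(x)=\omega_D(d)[\epsilon^{bx/d}]\phi(ax/d)$. Differentiating the action of ${\matrice 1 b 0 1}$ at $b=0$, and using that $[\epsilon^{bx}]=\zeta_{p^n}^{p^n bx}e^{tbx}$ has derivative $tx$ there (the factor $\zeta_{p^n}^{p^n bx}$ contributing nothing, since $\log\zeta_{p^n}=0$), shows that $u^+={\matrice 0 1 0 0}$ acts on $\mathrm{LA}_{\rm c}(\qpe,D_{\rm dif}^-)^\Gamma$ by $\phi\mapsto(x\mapsto tx\,\phi(x))$. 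As $\phi$ is locally analytic with compact support in $\qpe$ while $D_{\rm dif}^-$ is a $t$-power torsion $L_\infty[[t]]$-module, compactness of the support yields an integer $k$ with $t^k\phi\equiv 0$, whence $(u^+)^k\iota(\phi)=\iota((u^+)^k\phi)=0$, i.e. $\iota(\phi)\in\Pi^{u^+-\text{fini}}$.

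\emph{Next, that $\mathscr{K}\circ\iota$ is the natural inclusion.} Both $\mathscr{K}_{\iota(\phi)}$ and $\phi$ lie in $\mathrm{LA}_{\rm rc}(\qpe,D_{\rm dif}^-)^\Gamma$ and satisfy $f(ax)=\sigma_a(f(x))$, so it is enough to compare them at one point of each orbit $p^j\zpe$. The core of the proof is the following general formula: for every $v\in\Pi^{u^+-\text{fini}}$ with lift $\tilde v\in D\boxtimes\PP^1$ and every $z\in\Pi^*\otimes\omega_D$,
\[ [z,v]_{\PP^1}=\sum_{i\in\Z}\omega_D(p^{-i})\,[\varphi^{-N}(\mathrm{Res}_\zp({\matrice {p^{i+N}} 0 0 1}z)),\,\mathscr{K}_v(p^i)]_{\rm dif}\qquad(N\gg 0), \]
which I would obtain by unwinding the definition of $[\;,\;]_{\PP^1}$: split $\PP^1$ into $\zp$ and its complement, expand the pairing $[\;,\;]$ on $D$ through the localization maps $\varphi^{-n}$, and identify $\varphi^{-n}(\mathrm{Res}_\zp({\matrice {p^{n+i}} 0 0 1}\tilde v))\bmod D_{\rm dif}^+$ with $\mathscr{K}_v(p^i)$; this is precisely the computation yielding \eqref{formulemagique}, so \eqref{formulemagique} is the same formula with $\phi$ substituted for $\mathscr{K}_v$, defining the functional $z\mapsto[z,\phi]$. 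Applying the general formula to $v=\iota(\phi)$ and using the defining relation $[z,\iota(\phi)]_{\PP^1}=[z,\phi]$ gives
\[ \sum_{i\in\Z}\omega_D(p^{-i})\,[\varphi^{-N}(\mathrm{Res}_\zp({\matrice {p^{i+N}} 0 0 1}z)),\,\mathscr{K}_{\iota(\phi)}(p^i)-\phi(p^i)]_{\rm dif}=0 \]
for all $z\in\Pi^*\otimes\omega_D$ and all $N\gg 0$.

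\emph{Conclusion.} To finish I would argue that, for each index $i_0$, one can choose $z\in\Pi^*\otimes\omega_D$ so that every term with $i\neq i_0$ vanishes, and that the elements $\varphi^{-N}(\mathrm{Res}_\zp({\matrice {p^{i_0+N}} 0 0 1}z))$ arising this way exhaust enough of $D_{\rm dif}^+$ to separate the points of $D_{\rm dif}^-$ under the non-degenerate residue pairing $[\;,\;]_{\rm dif}\colon D_{\rm dif}^+\times D_{\rm dif}^-\to L$. Both facts rest on $\mathrm{Res}_\zp(\Pi^*\otimes\omega_D)$ being a $\psi$-stable submodule of $D^{]0,r_{m(D)}]}$ that generates $D$ (recalled in the construction of $\mathscr{K}$), closed moreover under the natural $\Gamma$-action and under multiplication by the $(1+T)^b$, $b\in\zp$. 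This yields $\mathscr{K}_{\iota(\phi)}(p^{i})=\phi(p^{i})$ for every $i$, and $\Gamma$-equivariance extends the equality over each orbit $p^i\zpe$, so $\mathscr{K}\circ\iota$ is the natural inclusion.

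I expect the main obstacle to be the general formula of the second paragraph: one must check that unwinding $[\;,\;]_{\PP^1}$ really does reproduce, term by term, the right-hand side of \eqref{formulemagique} with $\mathscr{K}_v$ in place of $\phi$ — with the correct factors $\omega_D(p^{-i})$, the correct passage from $\varphi^{-n}$ to $\psi$ on the cells of negative valuation, and only finitely many nonzero terms (finiteness coming from the $u^+$-finiteness of $v$ on one side and from the compact support of $\phi$ on the other). The choices of $z$ in the separation argument above are the other delicate point; the $B$- and $\Gamma$-equivariance verifications are routine.
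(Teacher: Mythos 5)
Le texte ne démontre pas cet énoncé : il est repris tel quel de \cite[Prop.~2.13]{ColmezPoids}, si bien qu'il n'y a pas de preuve interne à laquelle comparer la vôtre. Votre première moitié est correcte et correspond à l'argument standard : la $B$-équivariance de $\iota$ se ramène à celle de $(z,\phi)\mapsto[z,\phi]$, l'action infinitésimale de $u^+$ sur le modèle de Kirillov est bien $\phi\mapsto tx\,\phi(x)$, et comme une fonction $\Gamma$-équivariante à support compact dans $\qpe$ est déterminée par un nombre fini de valeurs dans le module de $t$-torsion $D_{\rm dif}^-$, un $t^k\phi$ s'annule identiquement, d'où la $u^+$-finitude de $\iota(\phi)$.

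La seconde moitié comporte en revanche une lacune réelle. Votre « formule générale » $[z,v]_{\PP^1}=\sum_i\omega_D(p^{-i})[\varphi^{-N}(\mathrm{Res}_\zp({\matrice{p^{i+N}}0 0 1}z)),\mathscr{K}_v(p^i)]_{\rm dif}$ est énoncée pour tout $v\in\Pi^{u^+-\text{fini}}$, la finitude de la somme étant attribuée à la $u^+$-finitude de $v$. Or la $u^+$-finitude ne borne que l'ordre de $t$-torsion des valeurs $\mathscr{K}_v(p^i)$, pas le support : pour un vecteur $u^+$-fini général, $\mathscr{K}_v$ n'appartient qu'à $\mathrm{LA}_{\rm rc}$, donc $\mathscr{K}_v(p^i)$ peut être non nul pour tout $i$ grand. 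En prenant par exemple $z$ fixé par ${\matrice p 0 0 1}$ (de tels éléments existent et jouent un rôle central dans ce même article), tous les termes $\varphi^{-N}(\mathrm{Res}_\zp({\matrice{p^{i+N}}0 0 1}z))$ coïncident et la somme n'a aucune raison d'être finie ni convergente. Comme vous ne savez a priori que $\mathscr{K}_{\iota(\phi)}\in\mathrm{LA}_{\rm rc}$, appliquer cette formule à $v=\iota(\phi)$ suppose déjà que $\mathscr{K}_{\iota(\phi)}$ est à support compact dans $\qpe$, ce qui est pratiquement l'énoncé à démontrer. L'argument de séparation est l'autre point manquant : $\Pi^*\otimes\omega_D$ n'est pas stable par les opérateurs $\mathrm{Res}_U$, on ne peut donc pas localiser $z$ sur une cellule $p^{i_0}\zpe$ pour isoler un terme, et c'est précisément là que l'irréductibilité de $D$ et les propriétés fines de $\mathrm{Res}_\zp(\Pi^*\otimes\omega_D)$ dans $D^{]0,r_{m(D)}]}$ doivent intervenir. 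Une voie plus sûre consiste à calculer directement $\mathscr{K}_{\iota(\phi)}(p^{i_0})$ à partir d'un relèvement explicite de $\iota(\phi)$ dans $D\boxtimes\PP^1$, plutôt que de postuler la formule pour tout vecteur $u^+$-fini.
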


Terminons en remarquant (cf. \cite[Rem. 2.14]{ColmezPoids}) que, si $D$ est de Rham non triangulin, alors $$ \Pi(D)^{u^+-\text{fini}} = \mathrm{LA}_{\rm c}(\qpe, D_{\rm dif}^-)^\Gamma. $$ 

\subsubsection{$(\varphi, \Gamma)$-modules de de Rham non triangulins de dimension $2$} \label{phiGammadR}

Rappelons que, d'après \S \ref{eqdiff}, on a une recette permettant de reconstruire, à partir de l'équation différentielle $\Nrig(D)$ et de la filtration de Hodge, tout $(\varphi, \Gamma)$-module $D \in \Phi \Gamma(\Robba)$ de rang $2$ qui est de Rham à poids de Hodge-Tate $0$ et $k \geq 1$. Précisément, on a
\[ D = \{ z \in \Nrig(D): \varphi^{-n}(z) \in L_n[[t]] \otimes \mathscr{L} + t^k L_n[[t]] \otimes \DdR(D) \text{ pour } n \gg 0 \}, \] où $\mathscr{L} := \mathrm{Fil}^0 \, \DdR(D) \subseteq \DdR(D)$ dénote la droite de la filtration de Hodge de $D$.


Réciproquement, soit $\Delta \in \Phi \Gamma(\Robba)$ de rang $2$, de Rham, non triangulin à poids de Hodge-Tate $0$ et $0$, et notons dorénavant $$ M_{\rm dR} := \DdR(\Delta), $$ qui est un $L$-espace vectoriel de dimension $2$ (sans filtration). On a $\D_{\rm dif, n}^+(\Delta) = L_n[[t]] \otimes M_{\rm dR}$ pour $n \geq m(\Delta)$. Si $k \geq 1$ et $\mathscr{L} \subseteq M_{\rm dR}$ est une droite, on pose 
\[ \Delta_{k, \mathscr{L}} = \{ z \in \Delta: \varphi^{-n}(z) \in L_n[[t]] \otimes \mathscr{L} + t^k L_n[[t]] \otimes M_{\rm dR} \text{ pour } n \gg 0 \}, \]
qui est un $(\varphi, \Gamma)$-module de rang $2$ sur $\Robba$, de Rham à poids de Hodge-Tate $0$ et $k$ tel que $\Nrig(\Delta_{k, \mathscr{L}}) = \Delta$. Tout $(\varphi, \Gamma)$-module $D$ de rang $2$ sur $\Robba$, de Rham, pas triangulin et à poids de Hodge-Tate distincts est (\cite[\S 3.1]{ColmezPoids}), à torsion près par une puissance du caractère cyclotomique, de la forme $\Delta_{k, \mathscr{L}}$ pour un unique choix de $\Delta$, $k$ et $\mathscr{L}$.

\begin{remarque} \label{dpst}
D'après \cite[Th. V.2.3]{Berger08} (cf. aussi \cite[\S 3.1]{ColmezPoids}), la donnée de $\Delta = \Nrig(D)$ est équivalente à la donnée d'un $(\varphi, N, \mathscr{G}_\qp)$-module $M$ (dit \textit{l'espace de solutions} de $\Delta$, dans la terminologie de \textit{loc. cit.}). Si $D$ est étale et $V = \mathbf{V}(D) \in \mathrm{Rep}_L \mathscr{G}_\qp$ est la représentation galoisienne associée à $D$ par l'équivalence de catégories de la proposition \ref{equivPhiGamma}, alors le $(\varphi, N, \mathscr{G}_\qp)$-module $M$ en question n'est rien d'autre que le module potentiellement semi-stable $\D_{\rm pst}(V)$ de $V$ de la théorie de Hodge $p$-adique. Le $(\varphi, \Gamma)$-module $\Delta$ est non triangulin si et seulement si l'action de l'inertie de $\mathscr{G}_\qp$ sur $M$ est absolument irréductible.
\end{remarque}



\subsubsection{Techniques de changement de poids} \label{Changement}

Nous aurons besoin dans la suite de certains résultats de Colmez (\cite{ColmezPoids}) concernant l'étude des vecteurs localement algébriques des représentations de $\mathrm{GL}_2(\qp)$ associées aux représentations de de Rham par la correspondance de Langlands $p$-adique.

Soit $\Delta \in \Phi \Gamma(\Robba)$ de rang $2$, de Rham, non triangulin à poids de Hodge-Tate nuls comme dans \S \ref{phiGammadR}. Dans \cite{ColmezPoids} (cf., par exemple, \cite[Th. 0.6]{ColmezPoids}), on construit, pour $k \in \Z$, une représentation localement analytique irréductible $\Pi(\Delta, k)$ de $G$, à caractère central $x^k \omega_\Delta$, en tordant convenablement l'action de $G$ sur le module $\Delta \boxtimes \PP^1$. Plus précisément, si $a, c \in \qp$ ne sont pas tous les deux nuls, on montre que l'opérateur $\partial$ sur $\Delta$ s'étend en un opérateur sur $\Delta \boxtimes \PP^1$ (\cite[Lem. 3.3(i)]{ColmezPoids}) et que $c \partial + a$ y est bijectif (\cite[Rem. 3.5]{ColmezPoids}) et on définit (\cite[Lem. 3.9]{ColmezPoids}), pour n'importe quel entier $k \in \Z$, une action de $G$ sur $\Delta \boxtimes \PP^1$ par la formule \footnote{Dans \cite{ColmezPoids}, la formule donnée pour l'action de $G$ est $ {\matrice a b c d} \ast_k v = (-c \partial + a)^k \cdot ({\matrice a b c d } \cdot v)$. Le changement de signe ci-dessous est justifié par les différentes normalisations entre \cite{ColmezPoids} et ce travail (qui reprend plutôt les normalisations de \cite{ColmezPhiGamma}!) pour la représentation $\mathrm{Sym}^{k-1}$. Ce changement élimine certaines nuisances des signes. } $$ {\matrice a b c d} \ast_k v = (c \partial + a)^k \cdot ({\matrice a b c d } \cdot v). $$ Par exemple, si $w = {\matrice 0  1 1 0}$ dénote l'involution, on a \[ w \ast_k v = \partial^k \cdot (w \cdot v). \] On note $(\Delta \boxtimes \PP^1)[k]$ le $G$-module $\Delta \boxtimes \PP^1$ muni de l'action $\ast_k$. Le sous-module $\Pi(\Delta)^* \otimes \omega_\Delta$ de $\Delta \boxtimes \PP^1$ est stable par l'action $\ast_k$ de $G$ (\cite[Corollaire 3.17]{ColmezPoids}) et on note $\Pi(\Delta, k)$ le quotient de $(\Delta \boxtimes \PP^1)[k]$ par $\Pi(\Delta)^* \otimes \omega_\Delta$, qui est une représentation de $G$ de type analytique \footnote{Une représentation de type analytique de $G$ est un espace de type \textrm{LF} (limite inductive d'espaces de Fréchet), muni d'une action continue de $H$ qui s'étend en une action de l'algèbre $\mathscr{D}(H)$ des distributions sur $H$.}.

La représentation $\Pi(\Delta, k)$ vérifie les propriétés suivantes (cf. \cite[Lem. 3.19]{ColmezPoids} pour le premier point et \cite[Th. 3.31]{ColmezPoids} pour les autres):
\begin{itemize}
\item $\Pi(\Delta)^* \otimes \omega_\Delta$, vu comme sous-module de $(\Delta \boxtimes \PP^1)[k]$, est isomorphe à $\Pi(\Delta, -k)^* \otimes \omega_\Delta$, d'où une suite exacte de $G$-modules $$ 0 \to \Pi(\Delta, -k)^* \otimes \omega_\Delta \to (\Delta \boxtimes \PP^1)[k] \to \Pi(\Delta, k) \to 0. $$
\item Il existe une représentation lisse  $\mathrm{LL}_p(\Delta)$ de $G$, qui ne dépend ni du poids ni de la filtration, de caractère central $x \omega_\Delta$, telle que $$ \Pi(\Delta, k)^{\rm alg} = (\mathrm{LL}_p(\Delta) \otimes \mathrm{Sym}^{k - 1}) \otimes M_{\rm dR}, $$ où $\Pi(\Delta, k)^{\rm alg}$ dénote les vecteurs localement algébriques de la représentation $\Pi(\Delta, k)$, et $\mathrm{Sym}^{k-1}$ est la puissance symétrique de la représentation standard de $G$ (cf. remarque \ref{kiralg} ci-dessous).
\item $\Pi(\Delta_{k, \mathscr{L}}) = \Pi(\Delta, k) / ((\mathrm{LL}_p(\Delta) \otimes \mathrm{Sym}^{k - 1}) \otimes \mathscr{L}) $.
\item $\Pi(\Delta_{k, \mathscr{L}})^{\rm alg} = (\mathrm{LL}_p(\Delta) \otimes \mathrm{Sym}^{k - 1}) \otimes (M_{\rm dR} / \mathscr{L})$.
\end{itemize}

\subsubsection{Modèles de Kirillov} \label{Kirillovtordu}

Les représentations $\Pi(\Delta)$ et $\Pi(\Delta, k)$ sont isomorphes, à torsion par un caractère près, en tant que $B$-représentations et l'application $v \mapsto \mathscr{K}_v$ de la section précédente fournit donc un modèle de Kirillov pour $\Pi(\Delta, k)^{u^+-\text{fini}}$ dans $\mathrm{LA}_{\rm rc}(\qpe, \Delta_{\rm dif}^-)^\Gamma$ muni d'une action de $B$ définie par la formule
\begin{equation} \label{actionk}
({\matrice a b 0 d} \ast_k \phi)(x) = a^k \omega_\Delta(d) [\epsilon^{bx / d}] \phi(ax/d).
\end{equation} Ceci induit aussi des modèles de Kirillov pour $\Pi(\Delta, k)^{\rm alg}$, ainsi que pour $\mathrm{LL}_p(\Delta)$ et $\Pi(D)^{\rm alg}$.

Soient $\mathscr{L}_1, \mathscr{L}_2 \subseteq M_{\rm dR}$ deux droites distinctes et soit $W_{\mathscr{L}_i} = \ker(\Pi(\Delta, k) \to \Pi(\Delta_{k, \mathscr{L}_i})) = (\mathrm{LL}_p(\Delta) \otimes \mathrm{Sym}^{k - 1}) \otimes \mathscr{L}_i \subseteq \Pi(\Delta, k)^{\rm alg}$, qui est une représentation localement algébrique de $G$. On a $\Pi(\Delta, k)^{\rm alg} = W_{\mathscr{L}_1} \oplus W_{\mathscr{L}_2}$ et $W_{\mathscr{L}_i} = \mathrm{LL}_p(\Delta) \otimes \mathrm{Sym}^{k - 1} \otimes \mathscr{L}_i$. Enfin, on a le diagramme commutatif suivant, où les flèches horizontales sont des injections et les verticales des isomorphismes de $G$-modules, qui met en scène tous les personnages introduits ci-dessus et qui sera très utile pour nos calculs futurs.

\begin{equation} \label{Kirillovdiag}
\xymatrix{
    W_{\mathscr{L}_i} \ar[r] \ar[d]^\wr & \Pi(\Delta, k)^{\rm alg} \ar[r] \ar[d]^\wr & \Pi(\Delta, k)^{u^+-\text{fini}} \ar[d]^\wr \\
    \mathrm{LA}_{\rm c}(\qpe, L^{-k}_\infty \otimes \mathscr{L}_i) ^\Gamma\ar[r] & \mathrm{LA}_{\rm c}(\qpe, L^{-k}_\infty \otimes M_{\rm dR}) ^\Gamma \ar[r] & \mathrm{LA}_{\rm c}(\qpe, \Delta_{\rm dif}^-) ^\Gamma, }
\end{equation}
où l'on a noté $L^{-k}_\infty = (t^{-k} L_\infty[t] / L_\infty[t])$.


\begin{remarque} \label{kiralg} Si $e_1, e_2$ la base canonique de $L^2$ sur $L$, on a $ \mathrm{Sym}^{k - 1} = \oplus_{j = 0}^{k - 1} L \cdot e_1^j e_2^{k - 1 - j}$ et l'action de $G$ est donnée par $$ {\matrice a b c d} \cdot e_1^j e_2^{k-1-j} = (a e_1 + c e_2)^j (b e_1  + d e_2)^{k-1-j}. $$ En particulier, on a \[ w \cdot e_1^j e_2^{k - 1 - j} = e_1^{k - 1 - j} e_2^j. \] On a (cf. \cite[\S VI.2.5]{ColmezPhiGamma}), pour chaque $i = 1, 2$, un isomorphisme $B$-équivariant \[ \iota_i \colon \mathrm{LC}_{\rm c}(\qpe, L_\infty)^\Gamma \otimes \mathrm{Sym}^{k - 1} \otimes \det^{-k} \xrightarrow{\sim} \mathrm{LA}_{\rm c}(\qpe, L^{-k}_\infty \otimes \mathscr{L}_i)^\Gamma  , \] \[ \phi \otimes e_1^j e_2^{k - 1 - j} \mapsto [x \mapsto (k - 1 - j)! \phi(x) (xt)^{j-k} \otimes f_i], \] où $B$ agit sur $\mathrm{LC}_{\rm c}(\qpe, L_\infty)^\Gamma$ via la formule $$ ({\matrice a b 0 d} \cdot \phi)(x) = (x \omega_\Delta)(d) \psi(bx/d) \phi(ax/d) $$ et sur le module de droite par l'action $\ast_k$ décrite dans la formule \eqref{actionk}. Cet isomorphisme devient un isomorphisme $G$-équivariant si l'on munit ces espaces d'une action de $G$ via les bijections $\mathrm{LL}_p(\Delta) \xrightarrow{\sim} \mathrm{LC}_{\rm c}(\qpe, L_\infty)^\Gamma$ et $W_{\mathscr{L}_i} \xrightarrow{\sim} \mathrm{LA}_{\rm c}(\qpe, L^{-k}_\infty \otimes \mathscr{L}_i)^\Gamma$.
\end{remarque}

\begin{lemme} \leavevmode \label{propinv} Soient $k, j \in \Z$. Alors
\begin{enumerate}
\item On a $w \cdot \partial = \partial^{-1} \cdot w$ sur $\Delta \boxtimes \PP^1$.
\item $[g \ast_{-k} x, y ]_{\PP^1} = \omega_\Delta(\det \; g) [x, g^{-1} \ast_k y ]_{\PP^1}, $ pour $x, y \in \Delta \boxtimes \PP^1$, $g \in G$.
\end{enumerate}
\end{lemme}

\begin{proof}
Le premier point est \cite[Prop. 3.6]{ColmezPoids}. Le deuxième point est une réécriture de  la formule $[g \ast_{-k} x, g \ast_{k} y]_{\PP^1} = [g \cdot x, g \cdot y]_{\PP^1}$ de \cite[Prop. 3.13.(iii)]{ColmezPoids} à l'aide de l'identité $[g \cdot x, g \cdot y]_{\PP^1} = \omega_\Delta(\det \; g) [x, y]_{\PP^1}$. En effet, on a
\begin{align*}
[ g \ast_{-k} x, y]_{\PP^1} &= [ g \ast_{-k} x, g \ast_k (g^{-1} \ast_k y)]_{\PP^1} \\
&= [g \cdot x, g \cdot (g^{-1} \ast_k y)]_{\PP^1} \\
&= \omega_\Delta(\det \; g) [ x, g^{-1} \ast_{-k} y]_{\PP^1}.  \qedhere \\
\end{align*}
\end{proof}

\subsection{Une équation fonctionnelle locale: préliminaires}

Cette section contient les calculs techniques qui sont à la base des démonstrations des résultats principaux de cet article (théorèmes \ref{eqfonct1} et \ref{eqfonct1b}).

\subsubsection{Vecteurs propres de $\psi$} \label{psiinvsec}

Dans toute cette sous-section, on considère $D \in \Phi \Gamma^{\text{ét}}(\Robba)$ de rang $2$, de Rham non triangulin et à poids de Hodge-Tate $0$ et $k \geq 1$ et on note $\Delta = \Nrig(D)$. On aura besoin d'établir le lien entre $\Delta^{\psi = 1}$ et la représentation $\Pi(\Delta)^* \otimes \omega_\Delta$.

\begin{lemme} \label{dualiteinv}
L'image de $(1 - \varphi) \Delta^{\psi = 1}$ par l'involution $w_\Delta$ est égale à $(1 - \omega_\Delta(p)^{-1} \varphi) \Delta^{\psi = \omega_\Delta(p)^{-1}}$ \footnote{Ce dernier module s'identifiant à $(1 - \varphi) \check{\Delta}^{\psi = 1}$ via l'identification entre $\check{\Delta}$ et $\Delta \otimes \omega_\Delta^{-1}$.}.
\end{lemme}

\begin{proof}
Les ingrédients pour démontrer ce lemme se trouvent déjà dans \cite[\S VI]{ColmezPhiGamma}, où on renvoie le lecteur pour les détails techniques des résultats cités. Rappelons d'abord que l'on a (cf. \S \ref{eqdiff}) des inclusions $t^k \Delta \subseteq D \subseteq \Delta$.

Soit $\nabla_k = \prod_{j = 0}^{k - 1} (\nabla - j)$, où $\nabla$ est l'opérateur introduit dans \S \ref{generalites}. Cet opérateur joue un grand rôle dans l'étude des vecteurs localement algébriques de la représentation de $\mathrm{GL}_2(\qp)$ associée à $D$ dans \cite[\S VI]{ColmezPhiGamma} et donc dans la preuve de la compatibilité entre la correspondance de Langlands classique et $p$-adique. On a les propriétés suivantes:
\begin{itemize}
\item $w_D \circ \nabla_k = (-1)^k \nabla_k \circ w_D$ sur $D \boxtimes \zpe$ (\cite[Lem. VI.4.3(ii)]{ColmezPhiGamma}).
\item D'après \cite[Lem. VI.6.14]{ColmezPhiGamma}. l'opérateur $\nabla_k$ induit des isomorphismes $(1 - \varphi) \Delta^{\psi = 1} \xrightarrow{\sim} (1 - \varphi) (t^k \Delta)^{\psi = 1}$ et $(1 - \omega_D(p)^{-1} \varphi) \Delta^{\psi = \omega_D(p)^{-1}} \xrightarrow{\sim} (1 - \varphi) (t^k \Delta)^{\psi = \omega_D(p)^{-1}}$. Notons que les modules $(1 - \varphi) (t^k \Delta)^{\psi = 1}$ et $(1 - \varphi) (t^k \Delta)^{\psi = \omega_D(p)^{-1}}$ sont inclus dans $D^{\psi = 0}$.
\item On a $w_D((1 - \varphi) (t^k \Delta)^{\psi = 1}) = (1 - \varphi) (t^k \Delta)^{\psi = \omega_D(p)^{-1}}$ (\cite[Th. VI.6.8 et Lem. VI.6.14]{ColmezPhiGamma}).
\item Dans les notations de \S \ref{Changement}, on a $D = \Delta_{k, \mathscr{L}}$, où $\mathscr{L} = \mathrm{Fil}^0 \, \DdR(D)$, et on sait que $w_\Delta = \partial^k \circ w_D$, $\omega_D = \omega_\Delta x^k$, et que $\partial^k$ induit (car il est bijectif sur $\Delta$, cf. \cite[\S 3.2]{ColmezPoids}) un isomorphisme
\[ (1 - \omega_D(p)^{-1} \varphi) \Delta^{\psi = \delta_D(p)^{-1}} \xrightarrow{\sim} (1 - \omega_D(p)^{-1} p^k \varphi) \Delta^{\psi = \omega_D(p)^{-1} p^k} = (1 - \omega_\Delta(p)^{-1} \varphi) \Delta^{\psi = \omega_\Delta(p)^{-1}}. \]
\end{itemize}

Si $z \in (1 - \varphi) \Delta^{\psi = 1}$, l'action de $w_\Delta$ sur $z$ est donc donnée par la composée
\begin{align*} 
w_\Delta \colon (1 - \varphi) \Delta^{\psi = 1} &\xrightarrow{(-1)^k \nabla_k} (1 - \varphi) (t^k \Delta)^{\psi = 1} \xrightarrow{w_D} (1 - \omega_D(p)^{-1} \varphi) (t^k \Delta)^{\psi = \omega_D(p)^{-1}} \\
&\xrightarrow{\nabla_k^{-1}} (1 - \omega_D(p)^{-1} \varphi) \Delta^{\psi = \omega_D(p)^{-1}} \xrightarrow{\partial^k} (1 - \omega_\Delta(p)^{-1} \varphi) \Delta^{\psi = \omega_\Delta(p)^{-1}},
\end{align*}
ce qui permet de conclure.
\end{proof}

\begin{lemme} \label{phiinv2}
On a $\Pi(D)^{{\matrice p 0 0 1} = p^{-r}} = 0$ pour tout $r > 0$.
\end{lemme}

\begin{proof}
Par dualité, il suffit de montrer que l'adhérence de $({\matrice p 0 0 1} - p^{-r}) (\Pi(D)^* \otimes \omega_D)$ dans $\Pi(D)^* \otimes \omega_D$ est égal au module $\Pi(D)^* \otimes \omega_D$ tout entier. Il suffit donc de montrer que $\Pi(D)^* \otimes \omega_D$ contient un sous-espace dense sur lequel ${\matrice p 0 0 1} - p^{-r}$ agit de manière surjective.

Considérons (cf. \cite[\S V.1.2]{ColmezPhiGamma}) le module \footnote{Soit $\widetilde{\mathbf{E}}_\qp^+$ le complété de la clôture radicielle de $\F_p((T))$, $\widetilde{\mathbf{A}}_\qp^+ = W(\widetilde{\mathbf{E}}_\qp^+)$ son anneau des vecteurs de Witt et posons $\widetilde{\mathscr{O}}^+_\E = \mathscr{O}_L \cdot \widetilde{\mathbf{A}}^+_\qp$. L'anneau $\widetilde{\mathbf{E}}_\qp^+$ est muni naturellement d'actions de $\varphi$ et $\Gamma$ commutant entre elles (et coïncidant avec celles sur $\F_p((T))$ inclus dans le corps résiduel $k_L((T))$ de $\mathscr{O}_\E$), qui s'étendent de manière unique à $\widetilde{\mathbf{A}}_\qp^+$ et par $\mathscr{O}_L$-linéarité à $\widetilde{\mathscr{O}}^+_\E$, l'action de $\varphi$ devenant bijective, et $\mathscr{O}_\E$ s'identifie naturellement au sous-anneau de $\widetilde{\mathscr{O}}^+_\E$ engendré topologiquement par $[1 + T] - 1$ (que l'on identifie à $T \in \mathscr{O}_\E$) et son inverse. De la même manière que l'on obtient $\Robba$ à partir de $\mathscr{O}^+_\E = \mathscr{O}_L[[T]]$ ($\Robba = \cup_{b \geq 1} \cap_{a \geq b} \E^{[r_a, r_b]}$, où $\E^{[r_a, r_b]} = \mathscr{O}_\E^+[\frac{T^{n_a}}{p}, \frac{p}{T^{n_b}}]^\wedge[1/p]$, où $n_a = r_a^{-1}, n_b = r_b^{-1}$ et la complétion est prise par rapport à la topologie $p$-adique, cf. \S \ref{anneaux} ou bien \cite[\S I.1.2]{ColmezPhiGamma}), l'on définit l'anneau $\widetilde{\Robba}$ à partir de $\widetilde{\mathscr{O}}^+_\E$. L'intérêt de ces objets réside dans le fait qu'à partir d'eux l'on peut construire des objets où $\varphi$ dévient inversible. cf. \cite[\S V.1.1]{ColmezPhiGamma} pour plus de détails et des exemples.} \footnote{Un ensemble $X$ dans un espace vectoriel topologique sur $L$ est borné si pour tout voisinage $U$ de zero il existe $\alpha \in L$ tel que $X \subseteq \alpha U$. En particulier, si une suite $(x_n)_{n \in \N}$ est bornée, alors $p^n x_n \to 0$ quand $n \to +\infty$.} \[ \widetilde{D}^+ := \{ x \in \widetilde{\Robba} \otimes D : (\varphi^n(x))_n \text{ est une suite bornée } \} \subseteq \Pi(D)^* \otimes \omega_D. \] Alors $\widetilde{D}^+$ est dense (cf. \cite[note (65)]{ColmezPhiGamma}, ceci suit essentiellement de l'irréductibilité de $D$ et de \cite[Lem. IV.2.2]{ColmezMirabolique}), et ${\matrice p 0 0 1} - p^{-r}$, qui agit comme $\varphi - p^{-r}$, y est surjectif, car il admet un inverse défini par $\sum_{i \geq 0} p^{ir} \varphi^i$ (qui converge sur $\widetilde{D}^+$ car le terme général de la série tend vers $0$, les $\varphi^i(x)$ étant bornés). Ceci permet de conclure.
\end{proof}

\begin{lemme} \label{phiinv}
On a $\Pi(\Delta)^{{\matrice p 0 0 1} = 1} = 0$.
\end{lemme}

\begin{proof}
Soit $\mathscr{L} = \mathrm{Fil}^0 \, \DdR(D)$ de sorte que $D = \Delta_{k, \mathscr{L}}$. L'opérateur $\partial^{-k - 1}$ induit une bijection entre $\Pi(\Delta)^{{\matrice p 0 0 1} = 1}$ et $\Pi(\Delta)^{{\matrice p 0 0 1} = p^{-k - 1}}$ et il suffit donc de montrer que ce dernier module est nul. D'après \S \ref{Changement} (voir l'avant dernier point dans la liste de propriétés), on a une suite exacte
\[ 0 \to \mathscr{L} \otimes (\mathrm{LL}_p(\Delta) \otimes \mathrm{Sym}^{k-1}) \to \Pi(\Delta, k) \to \Pi(D) \to 0. \] Comme $ \Pi(\Delta, k)$ s'identifie (\cite[Rem. 3.18]{ColmezPoids}) à $\Pi(\Delta)$ muni de l'action définie par ${\matrice a b c d} \cdot_k v := (\partial c + a)^k ({\matrice a b c d} \cdot v)$, on a une identification $\Pi(\Delta)^{{\matrice p 0 0 1} = p^{-k - 1}} = \Pi(\Delta, k)^{{\matrice p 0 0 1} = p^{-1}}$. Il suffit donc de montrer que $W_\mathscr{L}^{{\matrice p 0 0 1} = p^{-1}} = \Pi(D)^{{\matrice p 0 0 1} = p^{-1}} = 0$, où l'on a noté $W_\mathscr{L} = \mathscr{L} \otimes (\mathrm{LL}_p(\Delta) \otimes \mathrm{Sym}^{k-1})$.

Le modèle de Kirillov induit un isomorphisme entre $W_\mathscr{L}$ et $\mathrm{LA}_{\rm c}(\qpe, L^{-k}_\infty \otimes \mathscr{L}_i)$, qui n'a évidement pas de vecteurs satisfaisant ${\matrice p 0 0 1} \cdot v = p v$ (si $\phi \in \mathrm{LA}_{\rm c}(\qpe, L^{-k}_\infty \otimes \mathscr{L}_i)$ est telle que ${\matrice p 0 0 1} \cdot \phi = p \phi$, alors $\phi(x) = p^n ({\matrice p 0 0 1}^n \phi)(x) = p^{2n} \phi(p^n x) = 0$ si $n \gg 0$ car $\phi$ est à support compact). On conclut en utilisant le lemme \ref{phiinv2}.
\end{proof}

\begin{cor} \label{invphicor}
On a $(\Pi(\Delta)^* \otimes \omega_\Delta)^{{\matrice p 0 0 1} = 1} = (\Delta \boxtimes \PP^1)^{{\matrice p 0 0 1} = 1}$.
\end{cor}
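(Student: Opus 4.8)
The plan is to read off the statement directly from the fundamental exact sequence of $G$-modules
\[ 0 \to \Pi(\Delta)^* \otimes \omega_\Delta \to \Delta \boxtimes \PP^1 \to \Pi(\Delta) \to 0 \]
recalled in \S\ref{LLpadique} (and established in \cite[Th. II.3.1(ii)]{ColmezPhiGamma} and its analytic refinement \cite[Th. V.2.20]{ColmezPhiGamma}), together with Lemme \ref{phiinv}. First I would observe that the operator ${\matrice p 0 0 1}$ acts on all three terms of the sequence and that the maps are ${\matrice p 0 0 1}$-equivariant, so applying the left-exact functor $(-)^{{\matrice p 0 0 1} = 1}$ yields an exact sequence
\[ 0 \to (\Pi(\Delta)^* \otimes \omega_\Delta)^{{\matrice p 0 0 1} = 1} \to (\Delta \boxtimes \PP^1)^{{\matrice p 0 0 1} = 1} \to \Pi(\Delta)^{{\matrice p 0 0 1} = 1}. \]

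Next I would invoke Lemme \ref{phiinv}, which gives $\Pi(\Delta)^{{\matrice p 0 0 1} = 1} = 0$, so the inclusion $(\Pi(\Delta)^* \otimes \omega_\Delta)^{{\matrice p 0 0 1} = 1} \hookrightarrow (\Delta \boxtimes \PP^1)^{{\matrice p 0 0 1} = 1}$ is in fact a bijection. Concretely, an element $z \in \Delta \boxtimes \PP^1$ fixed by ${\matrice p 0 0 1}$ has image in $\Pi(\Delta)$ fixed by ${\matrice p 0 0 1}$, hence zero, so $z$ lies in the submodule $\Pi(\Delta)^* \otimes \omega_\Delta$; the reverse inclusion is trivial. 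This establishes the claimed equality.

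There is no real obstacle here: the corollary is a purely formal consequence of the exact sequence and the vanishing proved in Lemme \ref{phiinv} (which in turn relied on Lemme \ref{phiinv2} and the Kirillov model description of the locally algebraic vectors from \S\ref{Changement}). The only point deserving a line of care is the ${\matrice p 0 0 1}$-equivariance of the maps in the exact sequence, which is immediate since the sequence is one of $G$-modules and ${\matrice p 0 0 1} \in G$.
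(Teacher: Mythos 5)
Your proposal is correct and coincides with the paper's own proof: the corollary is obtained by taking ${\matrice p 0 0 1}$-invariants of the exact sequence $0 \to \Pi(\Delta)^* \otimes \omega_\Delta \to \Delta \boxtimes \PP^1 \to \Pi(\Delta) \to 0$ and applying Lemme \ref{phiinv}. Your added remarks on equivariance and left-exactness just make explicit what the paper leaves implicit.
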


\begin{proof}
Il suffit de prendre les invariants par ${\matrice p 0 0 1}$ de la suite exacte \[ 0 \to \Pi(\Delta)^* \otimes \omega_\Delta \to \Delta \boxtimes \PP^1 \to \Pi(\Delta) \to 0 \] et utiliser le lemme \ref{phiinv} ci-dessus.
\end{proof}

Rappelons que l'action de $\mathrm{GL}_2(\qp)$ sur $\Delta \boxtimes \PP^1$ est déterminée par le \textit{squelette d'action} (\cite[II.1.2]{ColmezPhiGamma}). En particulier, si $z = (z_1, z_2) \in \Delta \boxtimes \PP^1$, on a
\[ {\matrice p 0 0 1} \cdot z = (\varphi(z_1) + \omega_\Delta(p) w_\Delta( \mathrm{Res}_\zpe (\psi(z_2)), \omega_\Delta(p) \psi(z_2)). \]
On en déduit que $z$ est invariant par ${\matrice p 0 0 1}$ si et seulement si $z_2 \in \Delta^{\psi = \omega_\Delta(p)^{-1}}$ et $(1 - \varphi) z_1 = w_\Delta ( (1 - \omega_\Delta(p)^{-1} \varphi) z_2)$.

\begin{lemme} \label{psidist}
L'application $\mathrm{Res}_{\zp}$ induit un isomorphisme $$ (\Pi(\Delta)^* \otimes \omega_\Delta)^{{\matrice p 0 0 1} = 1} \xrightarrow{\sim} \Delta^{\psi = 1}. $$
\end{lemme}

\begin{proof}
Comme l'on a déjà mentionné, l'injectivité suit de \cite[Prop. 2.20]{ColmezPoids}. Si $z \in \Delta^{\psi = 1}$ alors, par le lemme \ref{dualiteinv} ci-dessus, $w_\Delta((1 - \varphi) z) \in (1 - \omega_\Delta(p)^{-1} \varphi) \Delta^{\psi = \omega_\Delta(p)^{-1}}$ et il existe donc $z' \in \Delta^{\psi = \omega_\Delta(p)^{-1}}$ tel que $w_\Delta ((1 - \varphi) z) = (1 - \omega_\Delta(p)^{-1} \varphi) z'$. Alors $\tilde{z} = (z, z') \in (\Delta \boxtimes \PP^1)^{{\matrice p 0 0 1} = 1} = (\Pi(\Delta)^* \otimes \omega_\Delta)^{{\matrice p 0 0 1} = 1}$ (corollaire \ref{invphicor}) et satisfait $\mathrm{Res}_\zp(\tilde{z}) = z$, ce qui permet de conclure.

%
\end{proof}

\begin{remarque} 
Soit $\alpha \in L^\times$ tel que $v_p(\alpha) \in \Z$. En appliquant $\partial^{-r}$, $r = v_p(\alpha)$, et en tordant par le caractère non-ramifié $\delta \colon \qpe \to \mathscr{O}_L^\times$ tel que $\delta(p) = \alpha^{-1} p^r$ à l'égalité du lemme \ref{psidist} (ou bien en répétant la même preuve), on montre que $\mathrm{Res}_{\zp}$ induit un isomorphisme
\[ (\Pi(\Delta)^* \otimes \omega_\Delta)^{{\matrice p 0 0 1} = \alpha} \xrightarrow{\sim} \Delta^{\psi = \alpha^{-1}}. \] Ceci s'applique en particulier à $\alpha = \omega_\Delta(p)^{-1} = \omega_D(p)^{-1} p^k$ car $\omega_D(p) \in \mathscr{O}_L^\times$ (car $D$ est étale et donc son caractère centrale est unitaire). Pour un tel $z \in \Delta^{\psi = \alpha^{-1}}$, on notera $\tilde{z} \in (\Pi(\Delta)^* \otimes \omega_\Delta)^{{\matrice p 0 0 1} = \alpha}$ l'élément correspondant.
\end{remarque}


%

\subsubsection{Lois de réciprocité (encore)}

Dans cette section, on paraphrase les lois des réciprocité de \S \ref{loisrec} sous une forme appropriée pour les calculs futurs. Plus précisément, on tient compte de toutes les identifications faites entre les modules de de Rham d'un $(\varphi, \Gamma)$, de son dual de Tate et de ses différents tordus par des caractères localement algébriques. Ceci nous permettra de comparer tous ces éléments sans peine dans un même module de de Rham.

Soit $\Delta \in \Phi \Gamma(\Robba)$ de rang $2$, de Rham non triangulin à poids de Hodge-Tate tous nuls comme dans \S \ref{phiGammadR}.

\begin{lemme} \label{expd1}
Soient $z \in \Delta^{\psi = \omega_\Delta(p)^{-1}}$, $\eta \colon \zpe \to L^\times$ un caractère d'ordre fini, $k \geq 1$ un entier, $j \in \Z$ tel que $j \geq -k$ et $m \in \Z$. Notons $\check{z} = z \otimes e_{\omega_\Delta}^\vee \in \check{\Delta}^{\psi = 1}$. Alors
\[ \exp^*(\int_\Gamma \eta \chi^{-j-k} \cdot \mu_{\check{z}}) \otimes \mathbf{e}^{\rm dR, \vee}_{\eta, -j-k, \omega_\Delta^\vee} = \frac{\omega_\Delta(p)^n}{(j + k -1)!} p^{-n({\j} + k)} \, \mathrm{Tr}_{L_n / L} ( G(\eta)^{-1} \Omega^{-1} [ \varphi^{-n} \partial^{{\j} + k - 1} z ]_0). \]
\end{lemme}

\begin{proof}
Par la proposition \ref{loirecexpd} on a, pour $n \gg 0$,
\begin{eqnarray*} \exp^*(\int_\Gamma \eta \chi^{-j-k} \cdot \mu_{\check{z}}) &=& \exp^*(\int_\Gamma 1 \cdot (\mu_z \otimes e_\eta \otimes e_{-j-k} \otimes e_{\omega_\Delta}^\vee )) \\
&=& p^{-n} \, \mathrm{Tr}_{L_n / L}( [\varphi^{-n} (z \otimes \mathbf{e}_{\eta, -j-k, \omega_\Delta^\vee} )]_0), \\
\end{eqnarray*}
Comme $ \varphi^{-n}(\mathbf{e}_{\eta, -j-k, \omega_\Delta^\vee}) = \omega_\Delta(p)^n \mathbf{e}_{\eta, -j-k, \omega_\Delta^\vee}$, on a $$ [ \varphi^{-n} (z \otimes \mathbf{e}_{\eta, -j-k, \omega_\Delta^\vee}) ]_0 = \omega_\Delta(p)^n [ \varphi^{-n} z \otimes \mathbf{e}_{\eta, -j-k, \omega_\Delta^\vee}]_0. $$ Clarifions la notation utilisée. L'élément $\varphi^{-n} z \otimes \mathbf{e}_{\eta, -j-k, \omega_\Delta^\vee}$ appartient à $$ L_n[[t]] \otimes \DdR(\Delta(\eta \chi^{-j-k} \omega_\Delta^{-1})) = L_n[[t]] \otimes M_{\rm dR} \otimes L \cdot \mathbf{e}^{\rm dR}_{\eta, -j-k, \omega_\Delta^\vee}$$ et peut être exprimé sous la forme $ x \otimes \mathbf{e}^{\rm dR}_{\eta, -j-k, \omega_\Delta^\vee}$, $x = G(\eta)^{-1} t^{1-k-j} \Omega^{-1} \varphi^{-n} z \in L_n((t)) \otimes M_{\rm dR}$. Si on écrit $\varphi^{-n} z$ sous la forme $\sum_{l \geq 0} a_l t^l$, $a_l \in L_n \otimes M_{\rm dR}$, on a alors $$[\varphi^{-n} z \otimes \mathbf{e}_{\eta, -j-k, \omega_\Delta^\vee}]_0 = G(\eta)^{-1} \Omega^{-1} a_{j+k-1} \otimes \mathbf{e}^{\rm dR}_{\eta, -j-k, \omega_\Delta^\vee}. $$ Notons que l'élément $\mathbf{e}^{\rm dR}_{\eta, -j-k, \omega_\Delta^\vee}$ est invariant par $\Gamma$ et commute donc à $\mathrm{Tr}_{L_n / L}$. On en déduit
$$ \mathrm{Tr}_{L_n / L} ( [\varphi^{-n} (z \otimes \mathbf{e}_{\eta, -j-k, \omega_\Delta^\vee})]_0) = \omega_\Delta(p)^n \, \mathrm{Tr}_{L_n / L} ( G(\eta)^{-1} \Omega^{-1} [ t^{-j -k + 1} \varphi^{-n} z ]_0) \otimes \mathbf{e}^{\rm dR}_{\eta, -j-k, \omega_\Delta^\vee} $$
En utilisant la formule $[ t^{-{\j} -k + 1} \varphi^{-n} z ]_0 = [ \varphi^{-n} z ]_{{\j} + k - 1} = \frac{p^{-n({\j} + k - 1)}}{({\j} + k -1)!} [\varphi^{-n} \partial^{{\j} + k - 1} z]_0$ on en déduit 
$$ \exp^*(\int_\Gamma \eta \chi^{-j-k} \cdot \mu_{\check{z}}) \otimes \mathbf{e}^{\rm dR, \vee}_{\eta, -j-k, \omega_\Delta^\vee} = \frac{\omega_\Delta(p)^n}{(j + k -1)!} p^{-n({\j} + k)} \, \mathrm{Tr}_{L_n / L} ( G(\eta)^{-1} \Omega^{-1} [ \varphi^{-n} \partial^{{\j} + k - 1} z ]_0), $$
ce qui permet de conclure.
\end{proof}

\begin{lemme} \label{exp1}
Soient $z \in \Delta^{\psi = 1}$, $\eta \colon \zpe \to L^\times$ un caractère d'ordre fini et ${\j} \geq 1$. Alors
\[ \exp^{-1}(\int_\Gamma \eta \chi^{\j} \cdot \mu_z) \otimes \mathbf{e}^{\rm dR, \vee}_{\eta, j} = (-1)^{\j} ({\j}-1)! p^{n(j-1)}  \mathrm{Tr}_{L_n / L} ( G(\eta)^{-1} [\varphi^{-n} \partial^{-{\j}} z]_0). \]
\end{lemme}

\begin{proof}
Par la proposition \ref{loirecexp} on a, pour $n \gg 0$,
\begin{eqnarray*}
\exp^{-1}(\int_\Gamma \eta \chi^{\j} \cdot \mu_z) &=& \exp^{-1}(\int_\Gamma \chi^{\j} \cdot (\mu_z \otimes e_\eta)) \\
&=& (-1)^{\j} ({\j}-1)! p^{-n} \, \mathrm{Tr}_{L_n / L} ( [\varphi^{-n} (\partial^{-j} z \otimes e_\eta \otimes t^{-j} e_j)]_0) \\
\end{eqnarray*}
et, comme $\varphi^{-n}(e_\eta \otimes t^{-j} e_j) = p^{nj} \, e_\eta \otimes t^{-j} e_j$, on a $$ \mathrm{Tr}_{L_n / L} ( [\varphi^{-n} (\partial^{-{\j}} z \otimes e_\eta \otimes t^{-j} e_j)]_0) = p^{nj} \, \mathrm{Tr}_{L_n / L} ( [\varphi^{-n} \partial^{-{\j}} z \otimes e_\eta \otimes t^{-j} e_j]_0). $$ 
Comme précédemment, l'élément $\varphi^{-n} \partial^{-{\j}} z \otimes e_\eta \otimes t^{-j} e_j$ appartient à $L_n[[t]] \otimes \DdR(\Delta(\eta\chi^{\j})) = L_n[[t]] \otimes M_{\rm dR} \otimes L \cdot \mathbf{e}^{\rm dR}_{\eta, j} $ et peux donc être exprimé sous la forme $ x \otimes \mathbf{e}^{\rm dR}_{\eta, j}$, avec $x = G(\eta)^{-1} \varphi^{-n} \partial^{-j} z \in L_n[[t]] \otimes M_{\rm dR}$. On a alors $$ [\varphi^{-n} \partial^{-{\j}} z \otimes e_\eta \otimes t^{-{\j}} e_{\j}]_0 = G(\eta)^{-1} [\varphi^{-n} \partial^{-j} z]_0 \otimes \mathbf{e}^{\rm dR}_{\eta, j}. $$ Notons finalement que l'élément $\mathbf{e}^{\rm dR}_{\eta, j}$ est invariant par $\Gamma$ et commute donc à la trace, ce qui donne
$$ \mathrm{Tr}_{L_n / L} ( [\varphi^{-n} (\partial^{-{\j}} z \otimes e_\eta \otimes t^{-{\j}} e_{\j})]_0) = p^{n{\j}} \, \mathrm{Tr}_{L_n / L}( G(\eta)^{-1} [\varphi^{-n} \partial^{-{\j}} z]_0) \otimes \mathbf{e}^{\rm dR}_{\eta, j}, $$ ce qui donne
\[ \exp^{-1}(\int_\Gamma \eta \chi^{\j} \cdot \mu_z) \otimes \mathbf{e}^{\rm dR, \vee}_{\eta, j} = (-1)^{\j} ({\j}-1)! p^{n(j-1)}  \mathrm{Tr}_{L_n / L} ( G(\eta)^{-1} [\varphi^{-n} \partial^{-{\j}} z]_0), \]
et permet de conclure.
\end{proof}

\subsubsection{Dualité et modèle de Kirillov (encore)}

On revient à la situation de \S \ref{psiinvsec} et on considère $D \in \Phi \Gamma^{\text{ét}}(\Robba)$ de rang $2$, de Rham non triangulin à poids de Hodge-Tate distincts et on note $\Delta = \Nrig(D)$. Considérons dans la suite le modèle de Kirillov de $\Pi(\Delta)^{u^+-\text{fini}}$ à valeurs dans $\mathrm{LA}_{\rm c}(\qpe, \Delta_{\rm dif}^+)^\Gamma$. Soit $\alpha \in L^\times$ et soit $z \in \Delta^{\psi = \alpha}$ obtenu par restriction à $\zp$ d'un élément $ \tilde{z} \in (\Pi(\Delta)^* \otimes \omega_\Delta)^{{\matrice p 0 0 1} = \alpha^{-1}}$, i.e. $z = \mathrm{Res}_\zp (\tilde{z})$. On a donc $\mathrm{Res}_\zp({\matrice {p^{i + n}} 0 0 1} \tilde{z}) = \mathrm{Res}_\zp(\alpha^{-(i+n)} \tilde{z}) = \alpha^{-(i+n)} z$ et, si $\phi \in \mathrm{LA}_{\rm c}(\qpe, \Delta_{\rm dif}^-)^\Gamma$, la formule \eqref{formulemagique} pour l'accouplement prend la forme très simple suivante:
\[ [\tilde{z}, \phi] = \sum_{i \in \Z} \omega_\Delta(p^{-i}) \alpha^{-(i+n)} [\varphi^{-n}(z), \phi(p^i) )]_{\rm dif} \;\;\; (n \gg 0). \]
Plus généralement, si $j \in \Z$, on peut appliquer $\partial^{\j}$ à $\tilde{z}$ pour obtenir un élément dans $(\Pi(\Delta)^* \otimes \omega_\Delta)^{{\matrice p 0 0 1} = \alpha^{-1}p^{-j}}$ et on a
\begin{equation} \label{formulemagique2}
[\partial^{\j} \tilde{z}, \phi] = \sum_{i \in \Z} \omega_\Delta(p^{-i}) \alpha^{-(i + n)} p^{-j (i + n)} [\varphi^{-n} \partial^{\j} z, \phi(p^i) )]_{\rm dif}.
\end{equation}

Rappelons que l'on a fixé une base $f_1, f_2$ de $M_{\rm dR}$. Les éléments $1 \otimes f_1, 1 \otimes f_2$ forment une base du $L_\infty((t))$-espace vectoriel $\Delta_{\rm dif} = L_\infty((t)) \otimes M_{\rm dR}$ et l'on identifie $(1 \otimes f_1) \wedge (1 \otimes f_2) = \Omega^{-1} dt \otimes e_{\omega_\Delta}$ sous l'isomorphisme $\wedge^2 \Delta_{\rm dif} = (L_\infty((t)) dt) \otimes \omega_\Delta$. Nous étendons l'accouplement $\langle \;,\; \rangle_{\rm dR}$ par $L_\infty((t))$-linéarité en un accouplement $\langle \;,\; \rangle_{\rm dR} \colon \Delta_{\rm dif} \times \Delta_{\rm dif} \to L_\infty((t))$ satisfaisant
\begin{equation} \label{accdR} x \wedge (h \otimes f_i) = h \langle x, 1 \otimes f_{3-i} \rangle_{\rm dR} ((1 \otimes f_{3-i}) \wedge (1 \otimes f_i)) = h \langle x, 1 \otimes f_{3-i} \rangle_{\rm dR} (-1)^{i} \Omega^{-1} dt \otimes e_{\omega_\Delta},
\end{equation}
pour $x \in \Delta_{\rm dif}$, $h \in L_\infty((t))$ et $i = 1, 2$.

\subsubsection{Exponentielle duale et accouplement $[\;,\;]_{\PP^1}$} \label{accexpd}

En s'inspirant de Nakamura (\cite[\S 3.4.5]{Nakamura2}) et de la formule de la proposition \ref{BHeqfonct} permettant de retrouver les facteurs epsilon d'une représentation lisse à partir de son modèle de Kirillov, on définit, pour $\eta \colon \zpe \to L^\times$ un caractère d'ordre fini, $k \geq 1$, $i \in \{1, 2\}$ et $m \in \Z$, la fonction $f^i_{\eta, k, m} \in \mathrm{LA}_{\mathrm{c}}(\qpe, \Delta_{\rm dif}^-)^\Gamma$ par la formule $$ f^i_{\eta, k, m}(p^n a) = \left\{
  \begin{array}{c c}
   (-1)^{i + 1} (k-1)! \sigma_a (G(\eta)^{-1} t^{-k}) \otimes f_i & \quad \text{ si $n = m$} \\
    0 & \quad \text{si $n \neq m$,}  \\
  \end{array}
\right. $$ où $a \in \zpe$, $n \in \Z$. Notons que, pour $a \in \zpe$, on a tout simplement \[ f^i_{\eta, k, m}(p^m a) = (-1)^{i+1} (k-1)! \eta(a) a^{-k} G(\eta)^{-1} t^{-k} \otimes f_i. \]

\begin{lemme} \label{expd2}
Soient $z \in \Delta^{\psi = \omega_\Delta(p)^{-1}}$, $\eta \colon \zpe \to L^\times$ un caractère d'ordre fini, $k \geq 1$ un entier, $j \in \Z$ tel que $j > -k$, $i \in \{ 1, 2 \}$ et $m \in \Z$. Notons $\check{z} = z \otimes e_{\omega_\Delta}^\vee \in \check{\Delta}^{\psi = 1}$. Alors \footnote{Rappelons que $\tilde{z} \in (\Pi(\Delta)^* \otimes \omega_\Delta)^{{\matrice p 0 0 1} = \omega_\Delta(p)}$ l'élément correspondant par le lemme \ref{psidist}}
\small{
\begin{equation*} 
[ \partial^{\j} \tilde{z}, f^{3 - i}_{\eta, k, m}]_{\PP^1} = (-1)^{i + 1} \frac{p}{p-1} \omega_\Delta(p)^n p^{-j(m+n)} \omega_\Delta(-1) \eta(-1)(-1)^k p^{-n k} \mathrm{Tr}_{L_n / L} \big(  G(\eta)^{-1} \Omega^{-1} \langle [  \varphi^{-n} \partial^{{\j}+k-1} z]_0, f_i \rangle_{\rm dR} \big).
\end{equation*}
} \normalsize
\end{lemme}

\begin{proof}
Traitons le cas $i = 1$, l'autre étant évidement analogue. Rappelons que $\partial^{\j} \tilde{z} \in (\Pi(\Delta)^* \otimes \omega_\Delta)^{{\matrice p 0 0 1} = \omega_\Delta(p) p^{-{\j}}}$ et que, si $n \gg 0$, on a
\small{\begin{eqnarray*}
[ \partial^{\j} \tilde{z}, f^2_{\eta, k, m}]_{\PP^1} &=& \sum_{i \in \Z} \omega_\Delta(p)^n p^{-j(i+n)} [\varphi^{-n} \partial^{\j} z,  f^2_{\eta, k, m}(p^i) )]_{\rm dif} \\
&=& - \omega_\Delta(p)^n p^{-j(m+n)} (k-1)! [ \varphi^{-n} \partial^{\j} z, G(\eta)^{-1} t^{-k} \otimes f_2 ]_{\rm dif},
\end{eqnarray*}}\normalsize
où la première égalité suit de la formule \eqref{formulemagique2} et la deuxième égalité du fait que $f_{\eta,k, m}^2(p^i) = 0$ dès que $i \neq m$. Par définition de l'accouplement $[ \;,\;]_{\rm dif} $ (cf. formule \eqref{accdif}), et en notant que $\sigma_{-1}(G(\eta)^{-1}t^{-k} \otimes f_2) = \eta(-1)(-1)^k (G(\eta)^{-1} t^{-k} \otimes f_2$, on a aussi
\[ [ \varphi^{-n} \partial^{\j} z, G(\eta)^{-1} t^{-k} \otimes f_2 ]_{\rm dif} = \omega_\Delta(-1) \eta(-1) (-1)^k \text{rés}_L( (\varphi^{-n} \partial^{\j} z \wedge (G(\eta)^{-1} t^{-k} \otimes f_2)) \otimes e_{\omega_\Delta}^\vee). \] On en déduit
\small \begin{equation} \label{eaea11}
[ \partial^{\j} \tilde{z}, f^2_{\eta, k, m}]_{\PP^1} = - \omega_\Delta(p)^n p^{-j(m+n)} \omega_\Delta(-1) \eta(-1) (-1)^k (k-1)! \text{rés}_L( (\varphi^{-n} \partial^{\j} z \wedge (G(\eta)^{-1} t^{-k} \otimes f_2)) \otimes e_{\omega_\Delta}^\vee).
\end{equation} \normalsize
En utilisant la formule de l'équation \eqref{accdR}, on peut écrire
\begin{eqnarray*}
\text{rés}_L( (\varphi^{-n} \partial^{\j} z \wedge (G(\eta)^{-1} t^{-k} \otimes f_2) \otimes e_{\omega_\Delta}^\vee) &=& - (-1)^{i + 1} \text{rés}_L( G(\eta)^{-1} \Omega^{-1} t^{-k+1} \langle \varphi^{-n} \partial^{\j} z, 1 \otimes f_1 \rangle_{\rm dR} t^{-1} dt) \\
&=& - (-1)^{i + 1} T_L( G(\eta)^{-1} \Omega^{-1} \langle [ \varphi^{-n} \partial^{\j} z]_{k - 1}, f_1 \rangle_{\rm dR}).
\end{eqnarray*}
Enfin, la formule \eqref{eaea11}, l'identité $[\varphi^{-n} z]_{k - 1} = \frac{p^{-n(k - 1)}}{(k - 1)!} [\varphi^{-n} \partial^{k - 1} z]_0$ et l'égalité $T_L = \frac{p}{p-1} p^{-n} \mathrm{Tr}_{L_n / L}$ en donnent
\small{ \begin{equation*} 
[ \partial^{\j} \tilde{z}, f^2_{\eta, k, m}]_{\PP^1} = (-1)^{i + 1} \frac{p}{p-1} \omega_\Delta(p)^n p^{-j(m+n)} \omega_\Delta(-1) \eta(-1)(-1)^k p^{-n k} \mathrm{Tr}_{L_n / L} \big(  G(\eta)^{-1} \Omega^{-1} \langle [  \varphi^{-n} \partial^{{\j}+k-1} z]_0, f_1 \rangle_{\rm dR} \big),
\end{equation*}
} \normalsize ce qui permet de conclure.
\end{proof}

\begin{prop} \label{formexpd} Soient $z \in \Delta^{\psi = \omega_\Delta^{-1}(p)}$, $\eta \colon \zpe \to L^\times$ un caractère d'ordre fini, $k \geq 1$ un entier, $j \in \Z$ tel que $j \geq -k$, $i \in \{ 1, 2 \}$ et $m \in \Z$. Notons $\check{z} = z \otimes e_{\omega_\Delta}^\vee \in \check{\Delta}^{\psi = 1}$. Alors $$ \langle \exp^*(\int_\Gamma \eta \chi^{-j-k} \cdot \mu_{\check{z}}) \otimes \mathbf{e}^{\rm dR, \vee}_{\eta, -j-k, \omega_\Delta^\vee}, f_i \rangle_{\rm dR} = (-1)^{i + 1} \frac{p-1}{p} p^{mj} \eta(-1) \frac{\omega_\Delta(-1) (-1)^k}{(j+k-1)!} [ \partial^{\j} \tilde{z}, f^{3-i}_{\eta, k, m}]_{\PP^1}. $$
\end{prop}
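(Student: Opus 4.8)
The statement compares the de Rham value $\langle \exp^*(\int_\Gamma \eta \chi^{-j-k} \cdot \mu_{\check{z}}) \otimes \mathbf{e}^{\rm dR, \vee}_{\eta, -j-k, \omega_\Delta^\vee}, f_i \rangle_{\rm dR}$ against the pairing $[\partial^j \tilde z, f^{3-i}_{\eta,k,m}]_{\PP^1}$. Both quantities have already been computed separately: Lemma \ref{expd1} expresses the left-hand side in terms of $\mathrm{Tr}_{L_n/L}(G(\eta)^{-1}\Omega^{-1}[\varphi^{-n}\partial^{j+k-1}z]_0)$, and Lemma \ref{expd2} expresses the right-hand side in terms of $\mathrm{Tr}_{L_n/L}(G(\eta)^{-1}\Omega^{-1}\langle[\varphi^{-n}\partial^{j+k-1}z]_0, f_i\rangle_{\rm dR})$. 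So the proof is essentially a matter of pairing Lemma \ref{expd1} against $f_i$ and matching the resulting scalar factors with those produced by Lemma \ref{expd2}.

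\medskip

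\textbf{Step 1: Apply $\langle -, f_i\rangle_{\rm dR}$ to Lemma \ref{expd1}.} Starting from
\[
\exp^*(\textstyle\int_\Gamma \eta \chi^{-j-k} \cdot \mu_{\check{z}}) \otimes \mathbf{e}^{\rm dR, \vee}_{\eta, -j-k, \omega_\Delta^\vee} = \frac{\omega_\Delta(p)^n}{(j + k -1)!} p^{-n(j + k)} \, \mathrm{Tr}_{L_n / L} ( G(\eta)^{-1} \Omega^{-1} [ \varphi^{-n} \partial^{j + k - 1} z ]_0),
\]
I would pair both sides with $f_i$ using the ($L_\infty((t))$-bilinear, $\Gamma$-equivariant in the appropriate sense) extended pairing $\langle\,,\,\rangle_{\rm dR}$. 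Since $\langle\,,\,\rangle_{\rm dR}$ commutes with $\mathrm{Tr}_{L_n/L}$ — the trace acts on the $L_n$-coefficients while $f_i$ is a fixed element of $M_{\rm dR}$ — this yields
\[
\langle \exp^*(\textstyle\int_\Gamma \eta \chi^{-j-k} \cdot \mu_{\check{z}}) \otimes \mathbf{e}^{\rm dR, \vee}_{\eta, -j-k, \omega_\Delta^\vee}, f_i \rangle_{\rm dR} = \frac{\omega_\Delta(p)^n}{(j + k -1)!} p^{-n(j + k)} \, \mathrm{Tr}_{L_n / L} ( G(\eta)^{-1} \Omega^{-1} \langle [ \varphi^{-n} \partial^{j + k - 1} z ]_0, f_i\rangle_{\rm dR}).
\]

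\textbf{Step 2: Invert Lemma \ref{expd2}.} The formula of Lemma \ref{expd2} reads
\[
[ \partial^{j} \tilde{z}, f^{3 - i}_{\eta, k, m}]_{\PP^1} = (-1)^{i + 1} \tfrac{p}{p-1} \omega_\Delta(p)^n p^{-j(m+n)} \omega_\Delta(-1) \eta(-1)(-1)^k p^{-n k} \mathrm{Tr}_{L_n / L} ( G(\eta)^{-1} \Omega^{-1} \langle [ \varphi^{-n} \partial^{j+k-1} z]_0, f_i \rangle_{\rm dR}),
\]
so I would solve this for the trace term:
\[
\mathrm{Tr}_{L_n / L} ( G(\eta)^{-1} \Omega^{-1} \langle [ \varphi^{-n} \partial^{j+k-1} z]_0, f_i \rangle_{\rm dR}) = (-1)^{i+1} \tfrac{p-1}{p} \omega_\Delta(p)^{-n} p^{j(m+n)} \omega_\Delta(-1)^{-1} \eta(-1)^{-1} (-1)^k p^{nk} [ \partial^{j} \tilde{z}, f^{3 - i}_{\eta, k, m}]_{\PP^1},
\]
using $\omega_\Delta(-1)^{-1}=\omega_\Delta(-1)$ and $\eta(-1)^{-1}=\eta(-1)$ and $(-1)^{-k}=(-1)^k$ since these are signs.

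\textbf{Step 3: Substitute and simplify.} Plugging the expression from Step 2 into the right-hand side of Step 1, the powers of $\omega_\Delta(p)^{\pm n}$ cancel, the powers $p^{-n(j+k)}\cdot p^{jn}\cdot p^{nk} = p^{0}$ cancel, leaving the factor $p^{jm}$, and the $(-1)^{i+1}$, $\eta(-1)$, $\omega_\Delta(-1)$, $(-1)^k$, $\frac{p-1}{p}$ and $\frac{1}{(j+k-1)!}$ factors assemble into exactly the claimed constant $(-1)^{i+1}\frac{p-1}{p}p^{mj}\eta(-1)\frac{\omega_\Delta(-1)(-1)^k}{(j+k-1)!}$. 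This gives the identity of the proposition.

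\medskip

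\textbf{Main point of care.} Lemma \ref{expd2} is stated for $j > -k$ whereas Proposition \ref{formexpd} is stated for $j \geq -k$, so the boundary case $j = -k$ needs a separate word: here $\partial^{j+k-1}z = \partial^{-1}z$, and one checks directly (or by a limiting/continuity argument in the cyclotomic variable, as in the proof of Lemma \ref{lemme26}) that both sides still agree, the reciprocity law of Proposition \ref{loirecexpd} applying since $\mathrm{Fil}^{-h}\DdR = \DdR$ for $h = k-1 \geq 0$ gives $j+k \geq 0 \geq -h$ when $j \geq -k$. Beyond that edge case, the only genuine verification is the bookkeeping of the scalar factors in Step 3, which is purely mechanical; the conceptual content — the reciprocity law and the Kirillov-model computation of $[\,,\,]_{\PP^1}$ — has already been done in Lemmas \ref{expd1} and \ref{expd2}.
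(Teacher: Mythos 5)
Your proof is correct and follows exactly the paper's own argument: project Lemma \ref{expd1} onto the line spanned by $f_i$, then combine with (the inverse of) Lemma \ref{expd2} and check that the powers of $p$ and $\omega_\Delta(p)$ cancel to leave the stated constant. Your remark on the boundary case $j=-k$ (where Lemma \ref{expd2} as stated requires $j>-k$) is a point of care the paper itself passes over silently, but it does not change the substance of the argument.
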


\begin{proof}
En projetant l'égalité du lemme \ref{expd1} sur la droite engendrée par $f_i$ on obtient la formule
\small{\begin{equation} \label{eaea12}
\langle \exp^*(\int_\Gamma \eta \chi^{-{\j}} \cdot \mu_{\check{z}}) \otimes \mathbf{e}^{\rm dR, \vee}_{\eta, -j, \omega_\Delta^\vee} , f_i \rangle_{\rm dR} = \frac{\omega_\Delta(p)^n}{({\j} + k -1)!} p^{-n({\j} + k)} \mathrm{Tr}_{L_n / L} ( G(\eta)^{-1} \Omega^{-1} \, \langle [ \varphi^{-n} \partial^{{\j} + k - 1} z ]_0 , f_i \rangle_{\rm dR} ).
\end{equation}} \normalsize

L'égalité du lemme \ref{expd2} et la formule \eqref{eaea12} ci-dessus donnent
\small{ \begin{eqnarray*} 
[ \partial^{\j} \tilde{z}, f^{3 - i}_{\eta, k, m}]_{\PP^1} &=& (-1)^{i + 1} \frac{p}{p-1} \omega_\Delta(p)^n p^{-j(m+n)} \omega_\Delta(-1) \eta(-1)(-1)^k p^{-n k} \mathrm{Tr}_{L_n / L} \big(  G(\eta)^{-1} \Omega^{-1} \langle [  \varphi^{-n} \partial^{{\j}+k-1} z]_0, f_i \rangle_{\rm dR} \big) \\
&=& (-1)^{i + 1} \frac{p}{p-1} p^{-m{\j}} \omega_\Delta(-1) \eta(-1) (-1)^k (j + k - 1)! \langle \exp^*(\int_\Gamma \eta \chi^{-{\j}} \cdot \mu_{\check{z}}) \otimes \mathbf{e}^{\rm dR, \vee}_{\eta, -j, \omega_\Delta^\vee} , f_i \rangle_{\rm dR},
\end{eqnarray*}} \normalsize ce qui permet de conclure.
\end{proof}

\subsubsection{Exponentielle et accouplement $[\;,\;]_{\PP^1}$} \label{accexp}

On définit, pour $\eta$ un caractère d'ordre fini, $i = 1, 2$ et $m \in \Z$, la fonction $g^i_{\eta, m} \in \mathrm{LA}_{\mathrm{c}}(\qp, \Delta_{\rm dif}^-)^\Gamma$ par la formule $$ g^i_{\eta, m}(p^n a) = \left\{
  \begin{array}{c c}
    (-1)^{i + 1} \sigma_a (G(\eta)^{-1} \Omega t^{-1}) \otimes f_i & \quad \text{ si $n = m$} \\
    0 & \quad \text{si $n \neq m$}  \\
  \end{array}
\right. $$
Pour $a \in \zpe$, on a donc \[ g^i_{\eta, m}(p^m a) = (-1)^{i + 1} \eta(a) \det_\Delta(a) \cdot (G(\eta)^{-1} \Omega) \cdot (at)^{-1} \otimes f_i. \] 

\begin{lemme} \label{exp2}
Soient $z \in \Delta^{\psi = 1}$, $\eta \colon \zpe \to L^\times$ un caractère d'ordre fini, $i \in \{ 1, 2\}$ et ${\j} \geq 1$. Alors \footnote{Rappelons une dernière fois que $\tilde{z} \in (\Pi(\Delta)^* \otimes \omega_\Delta)^{{\matrice p 0 0 1} = 1}$ l'élément correspondant à $z$ par le lemme \ref{psidist}.}
\[ [ \partial^{-{\j}} \tilde{z}, g^{3 - i}_{\eta, m}]_{\PP^1} = (-1)^{i + 1} \frac{p}{p-1} \omega_\Delta(p)^{-m} p^{j(m + n)} \eta(-1) p^{-n} \, \mathrm{Tr}_{L_n / L}( G(\eta)^{-1} \langle [\varphi^{n} \partial^{-j} z]_0, f_i \rangle_{\rm dR} ). \]
\end{lemme}

\begin{proof}
Il suffit de traiter le cas $i = 1$. On a $\tilde{z} \in (\Pi(\Delta)^* \otimes \omega_\Delta)^{{\matrice p 0 0 1} = 1}$, $\partial^{-{\j}} \tilde{z} \in (\Pi(\Delta)^* \otimes \omega_\Delta)^{{\matrice p 0 0 1} = p^j}$ et on a, pour $n \gg 0$,
\begin{eqnarray*} [ \partial^{-{\j}} \tilde{z}, g^2_{\eta, m}]_{\PP^1} &=& \sum_{i \in \Z} \omega_\Delta(p^{-i}) p^{j(i + n)} [\varphi^{-n} \partial^{-j} z,  g^2_{\eta, m}(p^i) )]_{\rm dif} \\
&=& - \omega_\Delta(p)^{-m} p^{j(m + n)} [ \varphi^{-n} \partial^{-j} z, G(\eta)^{-1} \Omega t^{-1} \otimes f_2 )]_{\rm dif},
\end{eqnarray*}
où la première égalité est la formule pour l'accouplement $[\;,\;]_{\PP^1}$ en termes du modèle de Kirillov (cf. formule \eqref{formulemagique2}), et la deuxième suit de ce que $g^2_{\eta, m}(p^i) = 0$ dès que $i \neq m$. Par définition de l'accouplement $[\;,\;]_{\rm dif}$ (cf. formule \eqref{accdif}), et en notant que $\sigma_{-1}(G(\eta)^{-1}) = \eta(-1) G(\eta)^{-1}$, $\sigma_{a}(\Omega t^{-1}) = \det_\Delta(a) a^{-1} \Omega t^{-1} = \omega_\Delta(a) \, \Omega t^{-1}$, on obtient 
\[ [ \varphi^{-n} \partial^{-j} z, G(\eta)^{-1} \Omega t^{-1} \otimes f_2 )]_{\rm dif}  = \eta(-1) \, \text{rés}_L \big( (  \varphi^{-n} \partial^{-j} z \wedge (G(\eta)^{-1} \Omega t^{-1} \otimes f_2 )) \otimes e_{\omega_\Delta}^\vee \big). \]
Or, la formule \[ x \wedge (h \otimes f_i) = h \langle x, 1 \otimes f_{3-i} \rangle_{\rm dR} ((1 \otimes f_{3-i}) \wedge (1 \otimes f_i)) = h \langle x, 1 \otimes f_{3-i} \rangle_{\rm dR} (-1)^{i} \Omega^{-1} dt \otimes e_{\omega_\Delta}, \] $x \in \Delta_{\rm dif}$, $h \in L_\infty((t))$ et $i = 1, 2$, et la définition de $\text{rés}_L$ comme la composée du résidu avec la trace de Tate normalisée donnent
$$ \text{rés}_L \big( (  \varphi^{-n} \partial^{-j} z \wedge (G(\eta)^{-1} \Omega t^{-1} \otimes f_2 )) \otimes e_{\omega_\Delta}^\vee \big) = - \frac{p}{p-1} p^{-n} \, \mathrm{Tr}_{L_n / L} ( G(\eta)^{-1} \langle  [ \varphi^{-n} \partial^{-j} z ]_0, f_1 \rangle_{\rm dR}). $$ On conclut alors que
$$ [ \partial^{-{\j}} \tilde{z}, g^2_{\eta, m}]_{\PP^1} = \frac{p}{p-1} \omega_\Delta(p)^{-m} p^{j(m + n)} \eta(-1) p^{-n} \, \mathrm{Tr}_{L_n / L}( G(\eta)^{-1} \langle [\varphi^{n} \partial^{-j} z]_0, f_1 \rangle_{\rm dR} ), $$ ce qui finit la preuve du lemme.
\end{proof}

\begin{prop} \label{formexp} Soient $z \in \Delta^{\psi = 1}$, $\eta \colon \zpe \to L^\times$ un caractère d'ordre fini, $i \in \{ 1, 2\}$ et ${\j} \geq 1$. Alors $$ \langle \exp^{-1}(\int_\Gamma \eta \chi^{\j} \cdot \mu_z) \otimes \mathbf{e}^{\rm dR, \vee}_{\eta, j} , f_i \rangle_{\rm dR} = (-1)^{i + 1} \frac{p - 1}{p} (-1)^{j} (j - 1)! \eta(-1) \omega_\Delta(p)^m p^{-jm} [ \partial^{-j} \tilde{z}, g^{3 - i}_{\eta, m}]_{\PP^1}. $$
\end{prop}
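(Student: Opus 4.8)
The plan is to deduce this from the reciprocity law of Lemma \ref{exp1} together with the Kirillov-model computation of Lemma \ref{exp2}, in exact parallel with the deduction of Proposition \ref{formexpd} from Lemmas \ref{expd1} and \ref{expd2}. First I would apply the $L$-bilinear form $\langle\,\cdot\,, f_i\rangle_{\rm dR}$ to both sides of the identity in Lemma \ref{exp1}. Since $\mathrm{Tr}_{L_n/L}$ is $L$-linear and $\langle\,\cdot\,,\cdot\,\rangle_{\rm dR}$ has been extended by $L_\infty((t))$-linearity, pairing against the fixed element $f_i \in M_{\rm dR}$ commutes with the trace, and one obtains
\[ \langle \exp^{-1}(\int_\Gamma \eta \chi^{j} \cdot \mu_z) \otimes \mathbf{e}^{\rm dR, \vee}_{\eta, j}, f_i \rangle_{\rm dR} = (-1)^{j} (j-1)!\, p^{n(j-1)}\, \mathrm{Tr}_{L_n / L}\big( G(\eta)^{-1} \langle [\varphi^{-n} \partial^{-j} z]_0, f_i\rangle_{\rm dR}\big). \]

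Next I would rewrite the identity of Lemma \ref{exp2} as an expression for the trace term occurring on the right, namely
\[ \mathrm{Tr}_{L_n / L}\big( G(\eta)^{-1} \langle [\varphi^{-n} \partial^{-j} z]_0, f_i\rangle_{\rm dR}\big) = (-1)^{i+1}\frac{p-1}{p}\,\omega_\Delta(p)^{m}\,p^{-j(m+n)}\,\eta(-1)\,p^{n}\,[\partial^{-j}\tilde z, g^{3-i}_{\eta,m}]_{\PP^1}, \]
using that $\eta$ has finite order, so that $\eta(-1)^{-1} = \eta(-1)$, and that $(-1)^{i+1}$ is its own inverse. Substituting this into the previous display and collecting the powers of $p$ — the factor $p^{n(j-1)}$ from Lemma \ref{exp1} combines with the factors $p^{-j(m+n)}$ and $p^{n}$ from Lemma \ref{exp2} to give $p^{-jm}$, with no remaining $n$-dependence — yields precisely the asserted identity.

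There is no real difficulty here: the substantive input (the explicit reciprocity law of \S\ref{loisrec} and the formula \eqref{formulemagique2} expressing the pairing $[\,\cdot\,,\cdot\,]_{\PP^1}$ through the Kirillov model) has already been packaged into Lemmas \ref{exp1} and \ref{exp2}, and what remains is bookkeeping. The only points deserving a moment's attention are tracking the sign conventions and checking that the spurious $n$-dependence cancels, which it must, since the left-hand side of the proposition involves neither $n$ nor — the apparent $m$-dependence of the right-hand side being absorbed into the $m$-dependence of the Kirillov pairing $[\partial^{-j}\tilde z, g^{3-i}_{\eta,m}]_{\PP^1}$ — any choice of large $m$.
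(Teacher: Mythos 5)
Your proposal is correct and follows essentially the same route as the paper: pair the identity of Lemma \ref{exp1} with $f_i$, then combine with Lemma \ref{exp2}, the powers of $p$ cancelling to leave $p^{-jm}$ and the signs and $\eta(-1)$ being involutions. The only (cosmetic) difference is that you solve for the de Rham pairing directly while the paper states the final comparison as an expression for $[\partial^{-j}\tilde z, g^{3-i}_{\eta,m}]_{\PP^1}$ and then inverts.
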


\begin{proof}
En accouplant la formule du lemme \ref{exp1} avec $f_i$, on en déduit la formule 
\begin{equation} \label{eaea1} \langle \exp^{-1}(\int_\Gamma \eta \chi^{\j} \cdot \mu_z) \otimes \mathbf{e}^{\rm dR, \vee}_{\eta, j}, f_i \rangle_{\rm dR}  = (-1)^{\j} ({\j}-1)! p^{n(j-1)}  \mathrm{Tr}_{L_n / L} ( G(\eta)^{-1} \langle [\varphi^{-n} \partial^{-{\j}} z]_0, f_i \rangle_{\rm dR}).
\end{equation}

Par ailleurs, $\tilde{z} \in (\Pi(\Delta)^* \otimes \omega_\Delta)^{{\matrice p 0 0 1} = 1}$, $\partial^{-{\j}} \tilde{z} \in (\Pi(\Delta)^* \otimes \omega_\Delta)^{{\matrice p 0 0 1} = p^j}$ et, par le lemme \ref{exp2} on a, pour $n \gg 0$,
$$ [ \partial^{-{\j}} \tilde{z}, g^{3 - i}_{\eta, m}]_{\PP^1} = (-1)^{i + 1} \frac{p}{p-1} \omega_\Delta(p)^{-m} p^{j(m + n)} \eta(-1) p^{-n} \, \mathrm{Tr}_{L_n / L}( G(\eta)^{-1} \langle [\varphi^{n} \partial^{-j} z]_0, f_i \rangle_{\rm dR} ). $$
Enfin, en comparant avec la formule (\ref{eaea1}) on conclut que
$$ [ \partial^{-{\j}} \tilde{z}, g^{3 - i}_{\eta, m}]_{\PP^1} = (-1)^{i + 1} \frac{p}{(p-1)} \frac{(-1)^{\j}}{ ({\j} - 1)! } \eta(-1) \omega_\Delta(p)^{-m} p^{jm} \langle \exp^{-1}(\int_\Gamma \eta \chi^{\j} \cdot \mu_z) \otimes \mathbf{e}^{\rm dR, \vee}_{\eta, j}, f_i \rangle_{\rm dR}, $$
et on finit la preuve.
\end{proof}


\subsubsection{Vecteurs localement algébriques} \label{vectlalg}

La représentation $\Pi(\Delta)$ ne possède pas de vecteurs localement algébriques et on va tordre l'action de $G$ de sorte que l'on fasse en apparaître, ce qui nous permettra de regarder les fonctions $f^i_{\eta, k, m}$ et $g^i_{\eta, m}$ comme des éléments du modèle de Kirillov d'une certaine représentation localement algébrique.

Reprenons les notations de la section \S \ref{Changement} et notons $\pi = \mathrm{LL}_p(\Delta)$ dans la suite. Rappelons qu'on a la relation $\omega_\pi = x \omega_\Delta$ entre les caractères centraux de $\Pi(\Delta)$ et $\pi$. Soit $k \geq 1$ et soit $i \in \{1, 2\}$. On peut regarder les fonctions $f^i_{\eta, k, m}$ et $g^i_{\eta, m}$ comme des éléments de $\mathrm{LA}_{\rm c}(\qpe, L_\infty^{-k} \otimes M_{\rm dR})^{\Gamma}$ qui, d'après le diagramme (\ref{Kirillovdiag}), s'identifie aux vecteurs localement algébriques $\Pi(\Delta, k)^{\rm alg}$ de la représentation $\Pi(\Delta, k)$.  Rappelons que l'on dispose des isomorphisme de $G$-modules (cf. remarque \ref{kiralg}) \[ \iota_i \colon \mathrm{LC}_{\rm c}(\qpe, L_\infty)^\Gamma \otimes \mathrm{Sym}^{k - 1} \otimes \det^{-k} \xrightarrow{\sim} \mathrm{LA}_{\rm c}(\qpe, L^{-k}_\infty \otimes M_{\rm dR})^\Gamma  , \] \[ \phi \otimes e_1^j e_2^{k - 1 - j} \mapsto [x \mapsto (k - 1 - j)! \phi(x) (xt)^{j-k} \otimes f_i], \;\;\; 0 \leq j \leq k - 1. \] Alors
\begin{itemize}
\item On a \footnote{Observons que $\eta$ est vu comme un caractère sur $\qpe$ en posant $\eta(p) = 1$.} $f^i_{\eta, k, m}(p^n a) = 0$ si $n \neq m$ et \begin{eqnarray*} (-1)^{i + 1} \, f^i_{\eta, k, m}(p^m a) &=& (k-1)! G(\eta)^{-1} \eta(a) (a t)^{-k} \otimes f_i \\
&=& (k-1)! p^{mk} G(\eta)^{-1} \eta(p^m a) (p^mat)^{-k} \otimes f_i \end{eqnarray*} et donc $$ \iota_i^{-1}(f^i_{\eta, k, m}) = (-1)^{i + 1} p^{mk} G(\eta)^{-1} \xi_{\eta, m} \otimes e_1^0 e_2^{k - 1}, $$ où $\xi_{\eta, m}$ est la fonction définie dans \S \ref{Kirillovaa}. 
\item On a $g^i_{\eta^{-1}, m}(p^n a) = 0$ si $n \neq m$ et
\begin{eqnarray*}
(-1)^{i + 1} \, g^i_{\eta^{-1}, m}(p^m a) &=& (G(\eta^{-1})^{-1} \Omega) \, \eta^{-1}(a) \mathrm{det}_\Delta(a) \, (a t)^{-1} \otimes f_i \\
&=& (\mathrm{det}_\Delta(p) p^{-1})^{-m} \, (G(\eta^{-1})^{-1} \Omega) \, \eta^{-1}(p^m a) \mathrm{det}_\Delta(p^m a) \, (p^m a t)^{- 1} \otimes f_i \\
&=& \omega_\Delta(p)^{-m} \, (G(\eta^{-1})^{-1} \Omega) \, \eta^{-1}(p^m a) \, \omega_\pi(p^m a) \, (p^m a t)^{- 1} \otimes f_i,
\end{eqnarray*}
où pour la dernière égalité on a utilisé $\det_\Delta(a) = \omega_\pi(a)$. D'où $$ \iota_i^{-1}(g^i_{\eta^{-1}, m}) = (-1)^{i + 1} \omega_\Delta(p)^{-m} \cdot (G(\eta^{-1})^{-1} \Omega) \cdot \xi_{\eta^{-1} \omega_\pi, m} \otimes e_1^{k - 1} e_2^0. $$
\end{itemize}

\begin{lemme} \label{formkir}
Soient $k \geq 1$ et $i \in \{ 1, 2 \}$. On a $$ w \ast_k f^i_{\eta, k, m} = (-1)^k p^{mk} \eta(-1) \omega_\Delta(p)^{-c(\pi \otimes \eta^{-1}) - m} \Omega^{-1} \frac{G(\eta^{-1})}{G(\eta)} \epsilon(\pi \otimes \eta^{-1}) g^i_{\eta^{-1}, -c(\pi \otimes \eta^{-1}) - m} $$
\end{lemme}

\begin{proof}
Par la discussion précédente et la proposition \ref{BHeqfonct}, on a \footnote{Le facteur $(-1)^k$ vient de l'action de $w$ sur le facteur $\det^{-k}$ du terme de gauche de l'isomorphisme $\iota_i$.}
\begin{eqnarray*}
w \ast_k f^i_{\eta, k, m} &=& \iota \left( {\matrice 0 1 1 0} \cdot \big( (-1)^{i + 1} p^{mk} G(\eta)^{-1} \, (\xi_{\eta, m} \otimes e_1^0 e_2^{k - 1}) \big) \right) \\
&=& (-1)^k (-1)^{i + 1} p^{mk} \eta(-1) G(\eta)^{-1} \epsilon(\pi \otimes \eta^{-1}) \iota \big( \xi_{\eta^{-1}\omega_\pi, -c(\pi \otimes \eta^{-1}) - m} \otimes e_1^{k - 1} e_2^0 \big) \\
&=& (-1)^k p^{mk} \eta(-1) \omega_\Delta(p)^{-c(\pi \otimes \eta^{-1}) - m} \Omega^{-1} \frac{G(\eta^{-1})}{G(\eta)} \epsilon(\pi \otimes \eta^{-1}) g^i_{\eta^{-1}, -c(\pi \otimes \eta^{-1}) - m},
\end{eqnarray*}
où, dans la deuxième égalité on a utilisé le fait que $\det {\matrice 0  1 1 0} = -1.$ Ceci permet de conclure.
\end{proof}

\subsection{L'équation fonctionnelle: le cas de poids de Hodge-Tate nuls}


Soit $\Delta \in \Phi \Gamma(\Robba)$ De Rham, non triangulin et à poids de Hodge-Tate nuls comme dans \S \ref{phiGammadR}. Si $z \in \Delta^{\psi = 1}$, on note $w_\Delta(z) = \mathrm{Res}_\zp(w \cdot \tilde{z}) \in \Delta^{\psi = \omega_\Delta^{-1}(p)}$. L'élément $w_\Delta(z)$ est aussi (cf. lemme \ref{dualiteinv}) l'unique élément $x \in \Delta^{\psi = \omega_\Delta^{-1}(p)}$ tel que $(1 - \omega_\Delta^{-1}(p) \varphi) x = w_\Delta((1 - \varphi) z)$, où $w_\Delta \colon \Delta^{\psi = 0} \to \Delta^{\psi = 0}$ est l'involution définie par restriction à $\zpe$ de l'action de ${\matrice 0 1 1 0}$ sur $\Delta \boxtimes \PP^1$.

On est maintenant en condition de montrer le résultat principal de cette section:


\begin{theorem} \label{eqfonct1}
Soient $z \in \Delta^{\psi = 1}$ et notons $\check{z} = w_\Delta(z) \otimes e_{\omega_\Delta}^\vee \in \check{\Delta}^{\psi = 1}$. Soient $\eta \colon \zpe \to L^\times$ un caractère d'ordre fini, $i \in \{ 1, 2 \}$ et $m \in \Z$. Alors \[ \exp^*(\int_\Gamma \eta \chi^{-j} \cdot \mu_{\check{z}}) \otimes \mathbf{e}^{\rm dR, \vee}_{\eta, -j, \omega_\Delta^\vee} = C(\Delta, \eta, j) \cdot \exp^{-1}(\int_\Gamma \eta^{-1} \chi^{j} \cdot \mu_z) \otimes \mathbf{e}^{\rm dR, \vee}_{\eta^{-1}, j}, \] où \[ C(\Delta, \eta, j) = \Omega^{-1} \frac{(-1)^{j}}{(j - 1)!^2} \, \epsilon(\eta^{-1})^{-2} \epsilon(\pi \otimes \eta^{-1} \otimes | \cdot |^j) \] pour tout $j \geq 1$.
\end{theorem}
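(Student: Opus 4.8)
## Plan de d�monstration du th�or�me \ref{eqfonct1}

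\textbf{Strat�gie g�n�rale.} Le plan est de faire transiter les deux c�t�s de l'�quation fonctionnelle � travers la correspondance de Langlands $p$-adique, en utilisant l'accouplement $[\;,\;]_{\PP^1}$ sur $\Delta \boxtimes \PP^1$ et sa $G$-�quivariance pour transf�rer l'action de l'involution $w$ d'un facteur � l'autre. C'est � ce moment pr�cis qu'appara�tront les facteurs epsilon de la repr�sentation lisse $\pi = \mathrm{LL}_p(\Delta)$, via le lemme \ref{formkir}. Tous les ingr�dients techniques ont �t� pr�par�s dans les sous-sections pr�c�dentes : il reste essentiellement � les assembler en contr�lant soigneusement les constantes.

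\textbf{�tapes.} Je commencerais par fixer $i \in \{1,2\}$ et par projeter les deux c�t�s de l'�galit� cherch�e sur la droite engendr�e par $f_i$ via $\langle\;,\;\rangle_{\rm dR}$ ; il suffit en effet de prouver l'�galit� coordonn�e par coordonn�e dans la base $f_1, f_2$ de $\DdR(\Delta)$. Ensuite :
\emph{(1)} Appliquer la proposition \ref{formexpd} (avec $j$ remplac� par un entier $\geq 1$ et $k=1$, apr�s �ventuellement avoir observ� que le cas poids nuls correspond � $k=1$ dans les notations de \ref{expd2}, ou en adaptant) pour exprimer $\langle \exp^*(\int_\Gamma \eta\chi^{-j}\cdot\mu_{\check z}) \otimes \mathbf{e}^{\rm dR,\vee}_{\eta,-j,\omega_\Delta^\vee}, f_i\rangle_{\rm dR}$ en termes de $[\partial^{\j}\tilde z, f^{3-i}_{\eta,1,m}]_{\PP^1}$.
\emph{(2)} Appliquer la proposition \ref{formexp} pour exprimer $\langle \exp^{-1}(\int_\Gamma \eta^{-1}\chi^{j}\cdot\mu_z) \otimes \mathbf{e}^{\rm dR,\vee}_{\eta^{-1},j}, f_i\rangle_{\rm dR}$ en termes de $[\partial^{-j}\tilde z, g^{3-i}_{\eta^{-1},m'}]_{\PP^1}$, pour un d�calage $m'$ � choisir.
\emph{(3)} Relier les deux accouplements $[\;,\;]_{\PP^1}$ : par d�finition $\check z = w_\Delta(z)\otimes e^\vee_{\omega_\Delta}$ correspond � $w\cdot\tilde z$ (� un twist pr�s), donc par le lemme \ref{propinv}(2) on a $[\partial^{\j}\tilde z, f^{3-i}_{\eta,1,m}]_{\PP^1} = \omega_\Delta(-1)[\tilde z, w\ast_{-j}(\partial^{\j} f^{3-i}_{\eta,1,m})]_{\PP^1}$ ou une variante, en utilisant aussi $w\cdot\partial = \partial^{-1}\cdot w$ (lemme \ref{propinv}(1)). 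On fait alors intervenir le lemme \ref{formkir} : $w\ast_1 f^{3-i}_{\eta,1,m}$ est un multiple explicite (avec facteur $\epsilon(\pi\otimes\eta^{-1})$) de $g^{3-i}_{\eta^{-1},-c(\pi\otimes\eta^{-1})-m}$. Cela transforme l'accouplement du c�t� ``exp$^*$'' en un accouplement du type apparaissant du c�t� ``exp$^{-1}$''.
\emph{(4)} En identifiant les deux accouplements $[\partial^{-j}\tilde z, g^{3-i}_{\eta^{-1},\cdot}]_{\PP^1}$ obtenus (ce qui force le choix $m' = -c(\pi\otimes\eta^{-1})-m$), et en reportant dans les formules de \ref{formexpd} et \ref{formexp}, on obtient l'�galit� cherch�e avec une constante $C(\Delta,\eta,j)$ �gale au produit de toutes les constantes interm�diaires.
\emph{(5)} Il reste � v�rifier que ce produit vaut bien $\Omega^{-1}\frac{(-1)^j}{(j-1)!^2}\epsilon(\eta^{-1})^{-2}\epsilon(\pi\otimes\eta^{-1}\otimes|\cdot|^j)$. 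Ici on utilise $\epsilon(\pi\otimes\eta^{-1}\otimes|\cdot|^j) = p^{-c(\pi\otimes\eta^{-1})j}\epsilon(\pi\otimes\eta^{-1})$ (observation faite en \S\ref{Kirillovaa}), la d�finition $\epsilon(\eta^{-1}) = p^{-c(\eta)/2}\eta^{-1}(p)^{c(\eta)}G(\eta)$ pour faire appara�tre les sommes de Gauss $G(\eta)/G(\eta^{-1})$ et les puissances de $p$, ainsi que l'�quation fonctionnelle $\epsilon(\eta^{-1})\epsilon(\eta)\cdots$ si n�cessaire pour simplifier, et le fait que $\omega_\Delta(p)$ est une unit� compens�e par les facteurs $\omega_\Delta(p)^{\pm(c(\pi\otimes\eta^{-1})+m)}$ qui doivent dispara�tre.

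\textbf{Point d�licat.} Le c\oe{}ur conceptuel (l'apparition du facteur epsilon) est d�j� enti�rement encapsul� dans le lemme \ref{formkir} et la $G$-�quivariance de $[\;,\;]_{\PP^1}$ ; l'obstacle principal est donc de nature comptable : il faut suivre sans erreur toutes les constantes (signes, factorielles $\frac{1}{(j+k-1)!(j-1)!}$ avec $k=1$, puissances de $p$ d�pendant de $n$, $m$, $j$, facteurs $\frac{p-1}{p}$, $\eta(-1)$, $\omega_\Delta(\pm1)$, $\omega_\Delta(p)^{\cdots}$, et surtout les sommes de Gauss $G(\eta)^{\pm1}, G(\eta^{-1})^{\pm1}$) � travers les propositions \ref{formexpd}, \ref{formexp}, les lemmes \ref{propinv} et \ref{formkir}, et v�rifier que les d�pendances en $n$ et $m$ s'annulent bien (comme elles le doivent, puisque le membre de gauche n'en d�pend pas) et que le r�sidu co�ncide avec $C(\Delta,\eta,j)$. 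Un soin particulier sera requis pour la traduction entre les normalisations de \cite{ColmezPoids} et celles du pr�sent texte (cf. la note de bas de page de \S\ref{Changement} sur le changement de signe de $\ast_k$) et pour le choix de $m' = -c(\pi\otimes\eta^{-1})-m$ qui rend les deux accouplements identiques. Une fois ces v�rifications faites, la conclusion est imm�diate, et le twist permettant de passer de \ref{eqfonct1} au th�or�me g�n�ral \ref{eqfonct1b} (poids de Hodge-Tate $0$ et $k$ quelconque) se fait ``facilement'' en appliquant $\partial^{k}$ comme annonc� dans l'introduction.
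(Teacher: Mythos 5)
Votre plan suit essentiellement la d�monstration du texte : projection sur $f_i$, traduction des deux membres en accouplements $[\;,\;]_{\PP^1}$ via les propositions \ref{formexpd} (avec $m=0$) et \ref{formexp} (avec $m'=-c(\pi\otimes\eta^{-1})$), passage de $w$ d'un c�t� � l'autre par le lemme \ref{propinv} sous la forme $\partial^{j}w\cdot\tilde z = w\ast_{-k}\partial^{-j-k}\tilde z$ puis $[w\ast_{-k}x,y]_{\PP^1}=\omega_\Delta(-1)[x,w\ast_k y]_{\PP^1}$, apparition du facteur epsilon par le lemme \ref{formkir}, et simplification des constantes via $\eta(-1)G(\eta^{-1})/G(\eta)=\epsilon(\eta^{-1})^{-2}$ et $\epsilon(\pi\otimes\eta^{-1})p^{-c(\pi\otimes\eta^{-1})(j+k)}=\epsilon(\pi\otimes\eta^{-1}\otimes|\cdot|^{j+k})$. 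La seule diff�rence est cosm�tique (vous fixez $k=1$ l� o� le texte garde $k\geq 1$ arbitraire, les deux param�trages couvrant tous les exposants $\geq 1$), et la formule approximative de votre �tape (3) se corrige pr�cis�ment en la manipulation ci-dessus.
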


\begin{proof}
Il suffit de montrer le résultat en projetant sur $f_i$, pour $i \in \{ 1, 2 \}$. Soient $j \geq 0$ et $k \geq 1$ et notons
\[ A_i = (-1)^{i + 1} \langle \exp^*(\int_\Gamma \eta \chi^{-j-k} \cdot \mu_{\check{z}}) \otimes \mathbf{e}^{\rm dR, \vee}_{\eta, -j-k, \omega_\Delta^\vee}, f_i \rangle_{\rm dR}, \]
\[ B_i = (-1)^{i + 1} \langle \exp^{-1}(\int_\Gamma \eta^{-1} \chi^{j + k} \cdot \mu_z) \otimes \mathbf{e}^{\rm dR, \vee}_{\eta^{-1}, j+k} , f_i \rangle_{\rm dR}. \]

En posant $m = 0$ dans la formule de la proposition \ref{formexpd} on obtient
\[  A_i = \frac{p-1}{p} \eta(-1) \frac{\omega_\Delta(-1) (-1)^k}{(j+k-1)!} [ \partial^{\j} w \cdot \tilde{z}, f^{3-i}_{\eta, k, 0}]_{\PP^1}. \]
Or, on a
\[ \partial^j w \cdot \tilde{z} = w \cdot \partial^{-j} \tilde{z} = w \cdot \partial^k \partial^{-j-k} \tilde{z} = w \ast_{-k} \partial^{-j-k} \tilde{z} \]
et, en utilisant la formule $[g \ast_{-k} x, y]_{\PP^1} = \omega_\Delta(\det \; g)[x, g^{-1} \ast_k y]_{\PP^1}$ du lemme \ref{propinv}, on en déduit
\[ [ \partial^{\j} w \cdot \tilde{z}, f^{3-i}_{\eta, k, 0}]_{\PP^1} = \omega_\Delta(-1) [ \partial^{-j-k} \tilde{z}, w \ast_k f^{3-i}_{\eta, k, 0}]_{\PP^1}. \]
Grâce au lemme \ref{formkir}, on sait que
\[ [ \partial^{-j-k} \tilde{z}, w \ast_k f^{3-i}_{\eta, k, 0}]_{\PP^1} = (-1)^k \eta(-1) \omega_\Delta(p)^{-c(\pi \otimes \eta^{-1})} \Omega^{-1} \frac{G(\eta^{-1})}{G(\eta)} \epsilon(\pi \otimes \eta^{-1}) [ \partial^{-j-k} \tilde{z}, g^{3 - i}_{\eta^{-1}, -c(\pi \otimes \eta^{-1})}]_{\PP^1}. \]
Enfin, en appliquant la proposition \ref{formexp} avec $j = j + k$, on a
\[ [ \partial^{-j-k} \tilde{z}, g^{3 - i}_{\eta^{-1}, -c(\pi \otimes \eta^{-1})}]_{\PP^1} = \frac{p}{p-1} \frac{(-1)^{j + k}}{(j + k - 1)!} \eta(-1) (\omega_\Delta(p) p^{- j- k})^{c(\pi \otimes \eta^{-1})} B_i . \] Observons que $\epsilon(\pi \otimes \eta^{-1}) p^{-c(\pi \otimes \eta^{-1})(j + k)} = \epsilon(\pi \otimes \eta^{-1} \otimes | \cdot |^{j + k})$. En simplifiant toutes ces jolies formules, et en utilisant la formule $\eta(-1) \frac{G(\eta^{-1})}{G(\eta)} = p^n G(\eta)^{-2} = \epsilon(\eta^{-1})^{-2}$, on obtient \[ A_i = \Omega^{-1} \frac{(-1)^{j+k}}{(j + k - 1)!^2} \, \epsilon(\eta^{-1})^{-2} \epsilon(\pi \otimes \eta^{-1} \otimes | \cdot |^{j + k}) \cdot B_i. \] Comme $j \geq 0$ et $k \geq 1$ ont été choisis arbitrairement, ceci achève la démonstration.

\end{proof}

\begin{remarque} \leavevmode
\begin{itemize}
\item Si $\eta \colon \zpe \to L^\times$ est le caractère trivial, la constante $C(\Delta, \eta, j)$ dévient \[ C(\Delta, \eta, j) = \frac{(-1)^{j}}{(j - 1)!^2} \Omega^{-1} \epsilon(\pi \otimes | \cdot |^j). \]
\item La valeur $\Omega^{-1}$ n'est que superflue car elle apparaît aussi dans le terme de gauche comme facteur dans $\mathbf{e}^{\rm dR, \vee}_{\eta, -j, \omega_\Delta^\vee}$. L'équation fonctionnelle du théorème ne dépend donc pas du choix de la base $f_1$ et $f_2$ de $\DdR(\Delta)$.
\end{itemize}
\end{remarque}

\subsection{L'équation fonctionnelle: cas de poids de Hodge-Tate $0$ et $k \geq 1$} \label{eqeq}

L'équation fonctionnelle du théorème \ref{eqfonct1} peut être tordue pour obtenir une équation fonctionnelle analogue quand le $(\varphi, \Gamma)$-module est de Rham à poids de Hodge-Tate $0, k \geq 1$. Commençons par fixer quelques notations.


Soient $k \geq 1$, $\mathscr{L} \subseteq M_{\rm dR}$ une droite et notons $D = \Delta_{k, \mathscr{L}}$. On a alors $\omega_D = \omega_\Delta x^k$ et l'involution $w_D$ sur $D^{\psi = 0}$ est donnée par (la restriction à $D^{\psi = 0}$ de) l'action $w_\Delta \ast_{-k} z =  \partial^{-k} w_\Delta \cdot z =  w_\Delta \partial^k \cdot z$, $z \in D^{\psi = 0}$. Notons \[ \check{\Delta}[k] = \Delta \otimes \omega_\Delta^{-1} x^{-k}. \] Ce module contient tous les duaux de Tate des $(\varphi, \Gamma)$-modules de la forme $\Delta_{k, \mathscr{L}}$ (en effet, comme $\Delta_{k, \mathscr{L}}$ est de dimension $2$, son dual de Tate s'identifie à $\Delta_{k, \mathscr{L}} \otimes \omega_{\Delta_{k, \mathscr{L}}}^{-1} = \Delta_{k, \mathscr{L}} \otimes \omega_\Delta^{-1} x^{-k} \subseteq \Delta \otimes \omega_\Delta^{-1} x^{-k}$). Le module $D$ est à poids de Hodge-Tate $0$ et $k$ et les poids de Hodge-Tate de $\check{D}$ sont dont $1 - k$ et $1$, d'où $\check{D}(k-1)$ est à poids $0, k$.

Soit $z \in D^{\psi = 1} \subseteq \Delta^{\psi = 1}$ et soit $y = (1 - \varphi) z \in D^{\psi = 0}$. Posons
\[ \check{y} = w_\Delta(y) \otimes e_{\omega_\Delta}^\vee \in \check{\Delta}^{\psi = 0}; \;\;\; \check{z} = \mathrm{Res}_\zp (w_\Delta(\tilde{z})) \otimes e_{\omega_\Delta}^\vee \in \check{\Delta}^{\psi = 1}, \]
\[ \check{y}[k] = w_D(y) \otimes e_{\omega_D}^\vee \in \check{\Delta}[k]^{\psi = 0} ; \;\;\; \check{z}[k] = \mathrm{Res}_\zp (w_D(\tilde{z})) \otimes e_{\omega_D}^\vee \in \check{\Delta}[k]^{\psi = 1}, \] On a bien \[ \check{y} = (1 - \varphi) \check{z}, \;\;\; \check{y}[k] = (1 - \varphi) \check{z}[k]. \] En effet,
\begin{eqnarray*}
(1 - \varphi) \check{z} &=& (1 - \varphi) (\mathrm{Res}_\zp (w_\Delta(\tilde{z})) \otimes e_{\omega_\Delta}^\vee ) = (1 - \omega_\Delta(p)^{-1} \varphi) \mathrm{Res}_\zp (w_\Delta(\tilde{z})) \otimes e_{\omega_\Delta}^\vee  \\
&=& \mathrm{Res}_\zpe \mathrm{Res}_\zp (w_\Delta(\tilde{z})) \otimes e_{\omega_\Delta}^\vee = w_\Delta(\mathrm{Res}_\zpe(z)) \otimes e_{\omega_\Delta}^\vee = \check{y}.
\end{eqnarray*}


L'équation fonctionnelle du théorème \ref{eqfonct1} en poids de Hodge-Tate positifs prend la forme suivante:

\begin{theorem} \label{eqfonct1b}
Les notations comme ci-dessus. On a, pour tout $j \geq 0$, \[ \exp^*(\int_\Gamma \eta \chi^{-j} \cdot \mu_{\check{z}[k]}) \otimes \mathbf{e}^{\rm dR, \vee}_{\eta, -j, \omega_D^\vee} = C(D, \eta,  j) \cdot \exp^{-1}(\int_\Gamma \eta^{-1} \chi^{j} \cdot \mu_z) \otimes \mathbf{e}^{\rm dR, \vee}_{\eta^{-1}, j}, \] où \[ C(D, \eta, j) = - \Omega^{-1} \, \frac{\Gamma^*(-j + 1)}{\Gamma^*(j +k)}  \, \epsilon(\eta^{-1})^{-2} \epsilon(\pi \otimes \eta^{-1} \otimes | \cdot |^j). \]
\end{theorem}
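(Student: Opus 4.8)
The plan is to deduce Theorem \ref{eqfonct1b} from Theorem \ref{eqfonct1} by a twisting argument. I would keep the notation of \S\ref{eqeq}: $D = \Delta_{k, \mathscr{L}}$, $\Delta = \Nrig(D)$ has all Hodge--Tate weights zero, $\omega_D = \omega_\Delta\, x^k$, $D[1/t] = \Delta[1/t]$ and $\DdR(D) = \DdR(\Delta) = M_{\rm dR}$, so that $f_1, f_2$ is still a basis of $\DdR(D)$, the reciprocity law of Proposition \ref{loirecexpd} applies to $\check{D}$ (Hodge--Tate weights $1-k$ and $1$) as soon as $j \geq -1$, and that of Proposition \ref{loirecexp} applies to $D$ (weights $0$ and $k$) as soon as $j \geq 1$. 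The structural starting point is the identity $w_D = \partial^{-k} \circ w_\Delta$ on $\Delta \boxtimes \PP^1$ (the twisted action $\ast_{-k}$ recalled in \S\ref{Changement}); together with the compatibility of $\partial$ with $\mathrm{Res}_{\zp}$ it gives $\mathrm{Res}_{\zp}(w_D(\tilde z)) = \partial^{-k}(w_\Delta(z))$, whence $\check{z}[k] = \partial^{-k}(w_\Delta(z)) \otimes e_{\omega_D}^\vee$. Thus $\check{z}[k]$ is obtained from the element $\check{z} = w_\Delta(z) \otimes e_{\omega_\Delta}^\vee$ of Theorem \ref{eqfonct1} by applying $\partial^{-k}$ and replacing the twist $e_{\omega_\Delta}^\vee$ by $e_{\omega_D}^\vee = e_{\omega_\Delta}^\vee \otimes e_{x^{-k}}$.

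The next step is to translate both sides of the asserted identity through the reciprocity laws, exactly as in Lemmas \ref{expd1} and \ref{exp1} but applied now to $\check{D}$ and to $D$ in place of $\check\Delta$ and $\Delta$: each side becomes a trace $\mathrm{Tr}_{L_n/L}$ of an explicit Laurent coefficient $[\varphi^{-n}\partial^{\bullet}(\cdots)]_0$, where on the left the factor $\partial^{-k}$ carried by $\check{z}[k]$ combines with the shift produced by reciprocity at $\chi^{-j}$ so as to reproduce, up to elementary constants (powers of $p$ coming from the localisations $\varphi^{-n}$ on the rank-one twists $e_{x^{-k}}$, $e_\eta$, $e_{-j}$, Gauss sums $G(\eta)$, and the factor $\Omega$), precisely the quantity $A_i$ occurring for the parameters $j, k$ in the proof of Theorem \ref{eqfonct1}; and on the right the term $\exp^{-1}(\int_\Gamma \eta^{-1}\chi^{j}\cdot\mu_z)\otimes\mathbf{e}^{\rm dR,\vee}_{\eta^{-1},j}$ for $D$ is likewise $B_i$ (for $\chi^{j+k}$ over $\Delta$) up to the ratio of factorials $\frac{(-1)^j(j-1)!}{(-1)^{j+k}(j+k-1)!}$, a power of $p$, and the passage from $\epsilon(\pi\otimes\eta^{-1}\otimes|\cdot|^{j+k})$ to $\epsilon(\pi\otimes\eta^{-1}\otimes|\cdot|^{j})$. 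Substituting both translations into the identity $A_i = \Omega^{-1} \frac{(-1)^{j+k}}{(j+k-1)!^2}\,\epsilon(\eta^{-1})^{-2}\,\epsilon(\pi\otimes\eta^{-1}\otimes|\cdot|^{j+k})\,B_i$ established in the course of the proof of Theorem \ref{eqfonct1}, all constants collapse and give the desired equality, with $C(D,\eta,j) = -\Omega^{-1}\frac{\Gamma^*(-j+1)}{\Gamma^*(j+k)}\,\epsilon(\eta^{-1})^{-2}\,\epsilon(\pi\otimes\eta^{-1}\otimes|\cdot|^{j})$; the modified Gamma function $\Gamma^*$ is exactly what packages $(-1)^{j-1}/(j-1)!$ for $j\geq 1$ together with the value $1$ at $j=0$.

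The value $j=0$ must be handled separately, since $\exp^{-1}$ for $D(\eta^{-1})$ is no longer bijective (that module has a nonzero $\mathrm{Fil}^0$); but Theorem \ref{eqfonct1} is available at the pair $(0,k)$ and the right-hand side is then read off from the appropriate reciprocity formula, or by continuity in $j$, which is precisely the reason $\Gamma^*$ rather than a bare factorial appears. The main obstacle is thus not conceptual but purely a matter of bookkeeping: one has to verify, with no slack, that all the normalisations --- signs, powers of $p$ from the localisations on the rank-one modules, Gauss sums $G(\eta^{\pm 1})$, the factor $\Omega$, and above all the factorials and Gamma factors --- recombine so as to carry $C(\Delta,\eta,j+k)$ over to $C(D,\eta,j)$; the one structural input to cite carefully is the compatibility of $\partial$ with the restriction maps $\mathrm{Res}_{\zp}$ on $\Delta\boxtimes\PP^1$ (cf.\ \cite{ColmezPoids}).
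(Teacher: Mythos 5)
Your reduction of Theorem \ref{eqfonct1b} to Theorem \ref{eqfonct1} pivots on the wrong specialization, and this is a genuine gap rather than a bookkeeping issue. You propose to identify the left-hand side $\exp^*(\int_\Gamma \eta\chi^{-j}\cdot\mu_{\check{z}[k]})$, up to elementary constants, with the quantity $A_i$ of the proof of Theorem \ref{eqfonct1} taken at the parameters $(j,k)$ --- that is, with the specialization of $\mu_{\check{z}}$ at $\eta\chi^{-j-k}$ --- and likewise the right-hand side with $B_i$ at $\chi^{j+k}$, so as to invoke the identity $A_i=C\cdot B_i$ at the value $j+k$. Neither identification holds. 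Through the reciprocity laws, the specialization of $\mu_{\check{z}}$ at $\eta\chi^{-j-k}$ extracts (up to explicit scalars) the coefficient $[\varphi^{-n}\partial^{j+k-1}w_\Delta(z)]_0$, whereas $\exp^*(\int_\Gamma \eta\chi^{-j}\cdot\mu_{\check{z}[k]})$, after unwinding $\check{z}[k]=\partial^{-k}\check{z}\otimes e_{x^{-k}}$, extracts $\frac{1}{j(j+1)\cdots(j+k-1)}[\varphi^{-n}\partial^{j-1}w_\Delta(z)]_0$: the factor $t^{-k}$ hidden in the de Rham basis $t^k e_{x^{-k}}$ of $L(x^{-k})$ exactly cancels the degree shift produced by $\partial^{-k}$, so one lands back on the \emph{same} Taylor coefficient as the specialization of $\mu_{\check{z}}$ at $\eta\chi^{-j}$, not the one at $\eta\chi^{-j-k}$. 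The coefficients $[\varphi^{-n}\partial^{j-1}w_\Delta(z)]_0$ and $[\varphi^{-n}\partial^{j+k-1}w_\Delta(z)]_0$ are independent pieces of data, and so are $[\varphi^{-n}\partial^{-j}z]_0$ and $[\varphi^{-n}\partial^{-j-k}z]_0$ on the other side (the latter involves the constants of integration of $\partial^{-k}$, which are invisible from the former); no combination of signs, powers of $p$ from localizations, Gauss sums and factorials can relate them. A concrete symptom that your constants cannot close up: passing from $\epsilon(\pi\otimes\eta^{-1}\otimes|\cdot|^{j+k})$ to $\epsilon(\pi\otimes\eta^{-1}\otimes|\cdot|^{j})$ costs a factor $p^{k\,c(\pi\otimes\eta^{-1})}$, and the conductor $c(\pi\otimes\eta^{-1})$ cannot possibly be produced by the elementary constants you list.

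The correct reduction --- the one the paper carries out --- applies Theorem \ref{eqfonct1} at the \emph{same} twist $j$ (equivalently, the identity $A_i=C\cdot B_i$ at any pair $(j',k')$ with $j'+k'=j$). The only new input is the single computation $[t^{-k}\partial^{-k}\varphi^{-n}(\check{z}\otimes e_\eta\otimes e_{-j})]_0=\frac{1}{j(j+1)\cdots(j+k-1)}[\varphi^{-n}(\check{z}\otimes e_\eta\otimes e_{-j})]_0$, which converts $C(\Delta,\eta,j)=\Omega^{-1}\frac{(-1)^{j}}{(j-1)!^2}\epsilon(\eta^{-1})^{-2}\epsilon(\pi\otimes\eta^{-1}\otimes|\cdot|^{j})$ into $C(D,\eta,j)=-\Omega^{-1}\frac{\Gamma^*(-j+1)}{\Gamma^*(j+k)}\epsilon(\eta^{-1})^{-2}\epsilon(\pi\otimes\eta^{-1}\otimes|\cdot|^{j})$; in particular the $\epsilon$-factor is never shifted. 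Your preliminary steps ($\check{z}[k]=\partial^{-k}(w_\Delta(z))\otimes e_{\omega_D}^\vee$ via $w_D=\partial^{-k}\circ w_\Delta$, the commutation of $\partial$ with $\mathrm{Res}_{\zp}$, and the caveat that $j=0$ requires care --- a fair point, since the displayed factor degenerates there) are correct and are exactly the right starting point; the argument breaks at the pivot to the $(j+k)$-instance of Theorem \ref{eqfonct1}.
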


\begin{proof}
Rappelons que l'opérateur $\partial$ commute à la restriction et l'on a donc $\check{z}[k] = \partial^{-k} \mathrm{Res}_\zp(w_\Delta(\tilde{z})) \otimes e_{\omega_\Delta}^\vee \otimes e_{x^{-k}} = \partial^{-k} \check{z} \otimes e_{x^{-k}}$. On a alors, pour $n \gg 0$,
\begin{eqnarray*}
\exp^*(\int_\Gamma \eta \chi^{-j} \cdot \mu_{\check{z}[k]}) &=&  p^{-n} \, \mathrm{Tr}_{L_n / L}([\varphi^{-n}(\partial^{-k} \check{z} \otimes e_\eta \otimes e_{-j} \otimes e_{x^{-k}})]_0) \\
&=&  p^{-n} \, \mathrm{Tr}_{L_n / L}([ t^{-k} \partial^{-k} \varphi^{-n} (\check{z} \otimes e_\eta \otimes e_{-j}) ]_0) \otimes t^k e_{x^{-k}}.
\end{eqnarray*}
Observons que, comme dans les démonstrations des propositions \ref{formexpd} et \ref{formexp}, le terme $[ t^{-k} \partial^{-k} \varphi^{-n} (\check{z} \otimes e_\eta \otimes e_{-j}) ]_0$ appartient à $\DdR(\Delta \otimes \omega_{\Delta}^\vee \otimes \eta \chi^{-j}) = \DdR(D) \otimes \mathbf{e}_{\eta, -j, \omega_\Delta^\vee}^{\rm dR}$. Si $\varphi^{-n} (\mathrm{Res}_\zp(w_\Delta(\tilde{z}))) = \sum_n a_l t^l$, en utilisant le fait que le terme de degré $l$, $l \geq 0$, en $t$ de $t^{-k} \partial^{-k} (a_l t^l)$ est $\frac{1}{(l+1) (l + 2) \hdots (l + k)} a_l t^l$ on en déduit
\[
[ t^{-k} \partial^{-k} \varphi^{-n} (\check{z} \otimes e_\eta \otimes e_{-j}) ]_0 = \frac{1}{j (j+1) \cdots (j +k -1)} [ \varphi^{-n} (\check{z} \otimes \eta \otimes e_{-j}) ]_0 \]
Comme $\mathbf{e}^{\rm dR, \vee}_{\eta, -j, \omega_D^\vee} \otimes t^k e_{x^{-k}} = \mathbf{e}^{\rm dR, \vee}_{\eta, -j, \omega_\Delta^\vee}$, on en déduit
\[ \exp^*(\int_\Gamma \eta \chi^{-j} \cdot \mu_{\check{z}[k]}) \otimes \mathbf{e}^{\rm dR, \vee}_{\eta, -j, \omega_D^\vee} = \frac{1}{ j (j+1) \cdots (j + k - 1)} \exp^*(\int_\Gamma \eta \chi^{-j} \cdot \mu_{\check{z}}) \otimes \mathbf{e}^{\rm dR, \vee}_{\eta, -j, \omega_\Delta^\vee}, \]

Par le théorème \ref{eqfonct1}, on a 
\[ \exp^*(\int_\Gamma \eta \chi^{-j} \cdot \mu_{\check{z}}) \otimes \mathbf{e}^{\rm dR, \vee}_{\eta, -j, \omega_\Delta^\vee} = C(\Delta, \eta, j) \cdot \exp^{-1}(\int_\Gamma \eta^{-1} \chi^{j} \cdot \mu_z) \otimes \mathbf{e}^{\rm dR, \vee}_{\eta^{-1}, j}, \] où \[ C(\Delta, \eta, j) = \Omega^{-1} \, \frac{(-1)^{j}}{(j - 1)!^2} \, \epsilon(\eta^{-1})^{-2} \epsilon(\pi \otimes \eta^{-1} \otimes | \cdot |^j), \]
d'où le résultat.
\end{proof}

\begin{remarque}
Si $\eta \colon \zpe \to L^\times$ est le caractère trivial, la constante $C(D, \eta, j)$ du théorème \ref{eqfonct1b} dévient \[ C(D, \eta, j) = - \Omega^{-1} \, \frac{\Gamma^*(-j + 1)}{\Gamma^*(j +k)} \, \epsilon(\pi \otimes | \cdot |^j). \]
\end{remarque}

\section{La conjecture $\epsilon$ locale de Kato en dimension $2$}

\subsection{Notations}

Fixons les notations dont on aura besoin dans la suite, pour lesquelles on renvoie à \cite{Nakamura2}, \cite{Nakamura3}.

\begin{itemize}
\item Soit $A$ une $\zp$-algèbre commutative satisfaisant l'une des deux conditions suivantes
\begin{itemize}
\item $A$ est un anneau semi-local noethérien complet pour la topologie définie pour son idéal de Jacobson $\mathrm{Jac}(A)$ et tel que $A / \ \mathrm{Jac}(A)$ est un anneau fini d'ordre une puissance de $p$.
\item $A = L$ est une extension finie de $\qp$.
\end{itemize}
Si $A$ satisfait une des ces conditions, on dit que $A$ est une $\zp$-algèbre de type (*). Pour une telle $A$, on note $\Robba_A$ l'anneau de Robba relatif sur $A$ (cf. \S \ref{phiGammarel}) et notons $\Phi\Gamma^{\text{ét}}(\Robba_A)$ la catégorie des $(\varphi, \Gamma)$-modules étales sur $\Robba_A$.

\item Soit $\D_{\rm perf}(A)$ la catégorie des complexes de $A$-modules parfaits (i.e quasi-isomorphes à un complexe borné de $A$-modules projectifs de type fini). On note \[ \mathrm{Det}_A: \D_{\rm perf}(A) \to \mathfrak{Inv}_A \] le foncteur déterminant (cf. \cite[\S 3A]{Nakamura3}) vers la catégorie des $A$-modules inversibles (i.e localement libres de rang $1$). Si $P$ est un $A$-module projectif, on a $\mathrm{Det}_A(P) = \wedge_A^{r_P} P$, où $r_P: \mathrm{Spec}\,A \to \Z$ dénote le rang de $P$. On note $\mathbf{1}_A = \mathrm{Det}_A(0) = A$ est l'objet unité (relatif au produit induit par le produit tensoriel).

\item Soit $D \in \Phi\Gamma^{\text{ét}}(\Robba_A)$ de rang $2$ et de Rham. On pose \[ \Delta_{A, 1}(D) = \mathrm{Det}_A(\mathscr{C}^\bullet_{\varphi, \gamma}(D)) \] le déterminant de la $\varphi, \gamma$-cohomologie de $D$ (cf. \S \ref{cohomphigamma}). Soit $\E_A$ définit comme précédemment si $A = L$ est une extension finie de $\qp$ et $\E_A = \varprojlim A / \mathrm{Jac}(A)^n [[T]][T^{-1}]$ autrement. Notons $D_0 \in \Phi\Gamma^{\text{ét}}(\E_A)$ le $(\varphi, \Gamma)$-module correspondant à $D$. On définit
\[ \mathscr{L}_A(D) := \{ x \in \det_{\E_A} D_0 \mid \varphi(x) = \det_D(p) x, \; \sigma_a(x) = \det_D(a) \cdot x, \;\;\; a \in \zpe \} = A \cdot e_D, \] et on pose \[ \Delta_{A, 2}(D) = \mathscr{L}_A(D). \]
Enfin, on définit la droite fondamentale locale de $D$ comme \[ \Delta_{A}(D) = \Delta_{A, 1}(D) \otimes \Delta_{A, 2}(D). \] Cette droite est compatible au changement de base, multiplicative pour les suites exactes courtes et se comporte bien sous la dualité.
\end{itemize}

Remarquons que, si $T \in \mathrm{Rep}_A \mathscr{G}_\qp$ et $i \in \{ 1, 2 \}$, on peut définir des modules $\Delta_{A, i}(T) \in \mathfrak{Inv}_A$ (cf. \cite[\S 1.7]{Kato2b}, \cite[\S 2.1.2]{Nakamura2}, \cite[\S 3B]{Nakamura3}) et on a (\cite[Corollary 3.2]{Nakamura3}, \cite[\S 2.2.2]{Nakamura2}) des isomorphismes canoniques
\[ \Delta_{A, i}(T) \cong \Delta_{A, i}(\D(T)). \]

\subsection{L'isomorphisme $\epsilon^{\rm dR}_{L}$}

Soient $A = L$ une extension finie de $\qp$ et $D \in \Phi\Gamma^{\text{ét}}(\Robba)$ de Rham. On a besoin (cf. \cite[\S 2.1.3]{Nakamura2}) de trois ingrédients pour définir l'isomorphisme $\epsilon^{\rm dR}_{L}$:

\begin{itemize}
\item La suite exacte fondamentale et les propriétés d'adjonction entre les applications exponentielle et exponentielle duale induisent (\cite[\S 3C eq. (25)]{Nakamura3}) la suite exacte 
\begin{eqnarray*} 0 &\to& H^0_{\varphi, \gamma}(D) \to \Dcris(D) \to \Dcris(D) \oplus t_D \to H^1_{\varphi, \gamma}(D) \\
&\to& \Dcris(\check{D})^* \oplus t_{\check{D}}^* \to \Dcris(\check{D})^* \to H^0_{\varphi, \gamma}(\check{D})^* \to 0,
\end{eqnarray*}
où $t_D = \DdR(D) / \mathrm{Fil}^0 \, \DdR(D)$. Par dualité, on sait que $H^0_{\varphi, \gamma}(\check{D})^* = H^2_{\varphi, \gamma}(D)$ et $(t_{\check{D}})^* = \mathrm{Fil}^0 \, \DdR(D)$. Le déterminant du complexe ci-dessus nous fournit donc un isomorphisme \[ \theta_L(D): \mathbf{1}_L \xrightarrow{\sim} \Delta_{L, 1}(D) \otimes \mathrm{Det}_L(\DdR(D)). \]

\item On note, pour $r \in \Z$, $n_r = \dim_L \mathrm{gr}^{-r} \DdR(D)$ et on définit \[ \Gamma_L(D) = \prod_{r \in \Z} \Gamma^*(r)^{-n_r}, \] où $ \Gamma^*(r) = \left\{
  \begin{array}{c c}
    (r - 1)! & \quad \text{ si $r > 0$} \\
    \frac{(-1)^r}{(-r)!} & \quad \text{si $r \leq 0$}  \\
  \end{array}.
\right. $

\item Soit $h_D = \sum_{r \in \Z} r \, n_r$. On a un isomorphisme (où on voit les deux espaces dans $\mathrm{Det}_{L_\infty((t))}(D_{\rm dif}))$
\[ \theta_{\rm dR, L}(D): \mathrm{Det}_L (\DdR(D)) \xrightarrow{\sim} \mathscr{L}_L(D), \;\;\; x \mapsto \epsilon(\D_{\rm pst}(D)) t^{h_D} x, \]
où $\D_{\rm pst}(D) = \D_{\rm pst}(\mathbf{V}(D))$ est le $(\varphi, N, \mathscr{G}_\qp)$-module filtré associé à $D$ (cf. \cite[\S 3.2]{Berger08} et remarque \ref{dpst}) et $ \epsilon(\D_{\rm pst}(D))$ dénote le facteur local de la représentation de Weil-Deligne (\cite{Deligne}) \footnote{La définition des facteurs epsilon dépend du choix d'un caractère additif ainsi que d'une mesure de Haar sur $\qp$ que l'on fixe comme dans \S \ref{replisses}. On sait, grâce à la compatibilité locale-globale dans la correspondance de Langlands $p$-adique \cite{Emerton2}, que les facteurs locaux des représentations de Weil-Deligne coïncident avec les facteurs locaux des représentations lisses fournies par la théorie du modèle de Kirillov. On pourrait donc définir les isomorphismes $\epsilon$ de de Rham en utilisant ces derniers et, dans ce cas-là, la preuve de la conjecture présentée dans ce chapitre ne dépendrait pas de telle compatibilité, et elle serait en conséquence purement locale.}. 
\end{itemize}

Avec les applications définies ci-dessus, on définit
\[ \epsilon^{\rm dR}_{L}: \mathbf{1}_L \xrightarrow{\Gamma_L(D) \theta_L(D)} \Delta_{L, 1}(D) \otimes \mathrm{Det}(\DdR(D)) \xrightarrow{1 \otimes \theta_{\rm dR, L}(D)} \Delta_L(D). \]

\subsection{Énoncé de la conjecture}

Rappelons (cf. \cite[\S 2.1.4]{Nakamura2}) la conjecture en question.

\begin{conjecture} \label{eaeaconj} Il existe une unique famille d'isomorphismes \[ \epsilon_{A}(D): \mathbf{1}_A \xrightarrow{\sim} \Delta_A(D), \] pour tout triplet $(A, D)$, avec $A$ une $\zp$ algèbre de type (*) et $D \in \Phi\Gamma^{\text{ét}}(\Robba_A)$, satisfaisant les propriétés suivantes
\begin{enumerate}
\item Pour tout morphisme $A \to A'$ de $\zp$-algèbres, on a
\[ \epsilon_{A}(D) \otimes 1_{A'} = \epsilon_{A'}(D \otimes_A A'). \]
\item Pour toute suite exacte $0 \to D' \to D \to D'' \to 0$ on a \[ \epsilon_{A}(D) = \epsilon_{A}(D') \otimes \epsilon_{A}(D''). \]
\item Pour tout $a \in \zpe$, si l'on remplace le système $(\zeta_{p^n})_{n \in \N}$ par $(\sigma_a(\zeta_{p^n}))_{n \in \N} = (\zeta_{p^n}^a)_{n \in \N}$ et l'on note $\epsilon_{A, \sigma_a(\zeta)}$ la nouvelle famille d'isomorphismes ainsi obtenues, on a
\[ \epsilon_{A, \sigma_a(\zeta)}(D) = \det_D(a) \, \epsilon_{A, \zeta}(D). \]
\item La composition \[ \mathbf{1}_A \xrightarrow{\epsilon_{A, \zeta}(D)} \Delta_A(D) \xrightarrow{\sim} \Delta_A(\check{D})^* \xrightarrow{\epsilon_{A, \sigma_{-1}(\zeta)}(\check{D})^*} \mathbf{1}_A \] donne l'identité, où l'isomorphisme du milieu est celui induit par la dualité locale.
\item Pour toute paire $(L, D)$, où $L$ est une extension finie de $\qp$ et $D \in \Phi \Gamma^{\text{ét}}(\Robba_L)$ est de Rham, on a \[ \epsilon_{L}(D) = \epsilon_{L}^{\rm dR}(D). \]
\end{enumerate}
\end{conjecture}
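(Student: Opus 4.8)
Le plan consiste \`a d\'eduire le th\'eor\`eme de la conjonction des r\'esultats de Nakamura \cite{Nakamura2}, \cite{Nakamura3} et du th\'eor\`eme \ref{eaeaeqfonctintro} ci-dessus. Rappelons que Nakamura a construit, pour tout couple $(A, D)$ avec $A$ de type (*) et $D \in \Phi\Gamma^{\text{\'et}}(\Robba_A)$ de rang $\leq 2$, une famille candidate d'isomorphismes $\epsilon_A(D) \colon \mathbf{1}_A \xrightarrow{\sim} \Delta_A(D)$, et qu'il a \'etabli \emph{inconditionnellement} les propri\'et\'es $(1)$ \`a $(4)$ de la conjecture \ref{eaeaconj} (changement de base, multiplicativit\'e, d\'ependance en $(\zeta_{p^n})$, \'equation fonctionnelle pour la dualit\'e locale), ainsi que l'unicit\'e d'une famille v\'erifiant $(1)$--$(5)$. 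Il ne reste donc qu'\`a v\'erifier la propri\'et\'e $(5)$: $\epsilon_L(D) = \epsilon^{\rm dR}_L(D)$ pour $L$ extension finie de $\qp$ et $D \in \Phi\Gamma^{\text{\'et}}(\Robba_L)$ de rang $2$ et de Rham. Par la propri\'et\'e $(2)$ et la multiplicativit\'e de $\epsilon^{\rm dR}$, jointes au cas de rang $1$ (\cite{Kato2b}, \cite{Nakamura3}), on se ram\`ene au cas o\`u $D$ est irr\'eductible; en tordant par un caract\`ere (propri\'et\'e $(2)$ appliqu\'ee aux caract\`eres de rang $1$, d\'ej\`a connue) on suppose $D$ \`a poids de Hodge-Tate $0$ et $k \geq 1$; enfin, d'apr\`es \cite{Kedlaya}, $D$ est soit triangulin --- auquel cas la conjecture est connue (\cite[Cor.~3.12]{Nakamura3}, \cite[Cor.~3.10]{Nakamura2}) --- soit non triangulin, cas que l'on suppose d\'esormais, et l'on pose $\Delta = \Nrig(D)$.

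Suivant la strat\'egie de Nakamura, on compare alors les deux trivialisations de la droite fondamentale apr\`es sp\'ecialisation en tous les points arithm\'etiques $\eta\chi^j$ ($\eta \colon \zpe \to L^\times$ d'ordre fini, $j \in \Z$): c\^ot\'e Nakamura, $\epsilon_L(D(\eta\chi^j))$ est la sp\'ecialisation en $\eta\chi^j$ de l'isomorphisme $\epsilon_{\Lambda_\infty}(\mathbf{Dfm}(D))$ sur l'alg\`ebre de distributions $\Lambda_\infty$ (propri\'et\'e $(1)$); c\^ot\'e de Rham, les $\epsilon^{\rm dR}_L(D(\eta\chi^j))$ s'interpolent $p$-adiquement en une trivialisation \og \`a la Perrin-Riou \fg{}, gr\^ace aux lois de r\'eciprocit\'e explicites de \S\ref{loisrec}. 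Il suffit donc d'\'etablir $\epsilon_L(D(\eta\chi^j)) = \epsilon^{\rm dR}_L(D(\eta\chi^j))$ pour tout $\eta$ d'ordre fini et tout $j \in \Z$, le cas de $D$ lui-m\^eme correspondant \`a $(\eta, j) = (1, 0)$. En d\'eroulant la d\'efinition de $\epsilon^{\rm dR}_L$ (suite exacte fondamentale reliant la cohomologie \`a $\Dcris$ et $\DdR$, facteurs $\Gamma^*(\,\cdot\,)$ et facteur epsilon de Weil-Deligne $\epsilon(\D_{\rm pst}(\,\cdot\,))$, cf.~remarque \ref{dpst}) et celle de $\epsilon_L$ (involution de Colmez $w_D$, accouplement d'Iwasawa), et en utilisant le cas de rang $1$ et la propri\'et\'e $(4)$, cette \'egalit\'e en $\eta\chi^j$ se ram\`ene \`a l'identit\'e explicite comparant $\exp^*(\int_\Gamma \eta\chi^{-j} \cdot \check{\mu}) \otimes \mathbf{e}^{\rm dR, \vee}_{\eta, -j, \omega_D^\vee}$ et $\exp^{-1}(\int_\Gamma \eta^{-1}\chi^{j} \cdot \mu) \otimes \mathbf{e}^{\rm dR, \vee}_{\eta^{-1}, j}$, avec une constante $C(D, \eta, j)$ faisant intervenir $\epsilon(\pi \otimes \eta^{-1} \otimes |\cdot|^j)$; le point clef \'etant la compatibilit\'e locale-globale de la correspondance de Langlands $p$-adique, qui identifie ce facteur, produit par l'action de $w$ sur le mod\`ele de Kirillov, au facteur epsilon de Weil-Deligne intervenant dans $\epsilon^{\rm dR}$.

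Cette identit\'e est pr\'ecis\'ement le th\'eor\`eme \ref{eqfonct1b} pour $j \geq 1$. Pour $1 - k \leq j \leq 0$, c'est \cite[Prop.~3.14]{Nakamura2} (que l'on pourrait aussi red\'emontrer en adaptant les techniques de la section \ref{equationfonctionnelle}). Enfin, pour $j \leq -k$, la propri\'et\'e $(4)$ de $\epsilon$ (d\'ej\`a \'etablie par Nakamura) ram\`ene l'\'egalit\'e $(5)$ en $D(\eta\chi^j)$ \`a celle en son dual de Tate $\check D(\eta^{-1}\chi^{-j})$; or ce dernier est une tordue de $\check D(\chi^{k-1})$ --- encore de Rham, non triangulin et \`a poids de Hodge-Tate $0$ et $k$ --- par $\eta^{-1}\chi^{1-k-j}$ avec $1-k-j \geq 1$, de sorte que le th\'eor\`eme \ref{eqfonct1b} appliqu\'e \`a $\check D(\chi^{k-1})$ conclut. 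On couvre ainsi tous les $j \in \Z$, ce qui \'etablit $(5)$ en tout point arithm\'etique, donc partout, et ach\`eve la d\'emonstration.

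Le c\oe{}ur de la preuve --- et donc la principale difficult\'e --- est le th\'eor\`eme \ref{eaeaeqfonctintro} pour $j \geq 1$ (th\'eor\`emes \ref{eqfonct1} puis \ref{eqfonct1b}), dont la d\'emonstration occupe l'essentiel de la section \ref{equationfonctionnelle}: il faudra exprimer les deux membres de l'identit\'e via la correspondance de Langlands $p$-adique pour $\mathrm{GL}_2(\qp)$ (lois de r\'eciprocit\'e, mod\`ele de Kirillov de $\Pi(\Delta)^{u^+-\text{fini}}$, accouplement $[\,\cdot\,,\,\cdot\,]_{\PP^1}$), passer aux vecteurs localement alg\'ebriques de $\Pi(\Delta, k)$ par les techniques de changement de poids de \cite{ColmezPoids}, et appliquer la formule du lemme \ref{formkir} d\'ecrivant $w \ast_k f^i_{\eta, k, m}$, o\`u surgit le facteur $\epsilon(\pi \otimes \eta^{-1})$ via les formules du mod\`ele de Kirillov de \cite{BH}.
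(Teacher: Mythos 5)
Votre plan est correct et co\"incide pour l'essentiel avec la d\'emonstration du texte : r\'eduction au cas non triangulin de poids $0$ et $k$, puis v\'erification de la propri\'et\'e $(5)$ sur tous les tordus $D(\eta\chi^j)$ en distinguant les trois r\'egimes de $\mathrm{Fil}^0\,\DdR(D(\eta\chi^j))$ --- le cas interm\'ediaire �tant d\^u � Nakamura, le cas des poids n\'egatifs se ramenant au cas des poids positifs par la dualit\'e $(4)$, et ce dernier cas \'etant exactement l'\'equation fonctionnelle du th\'eor\`eme \ref{eqfonct1b}. La seule diff\'erence est de pr\'esentation : le texte explicite les deux trivialisations $\beta_L(D(j))$ et $\beta^{\rm dR}_L(D(j))$ sur une base de $\mathrm{Det}(H^1_{\varphi,\gamma}(D(j)))$ via les lemmes \ref{lemme26} et \ref{lemmeacc} pour faire appara\^itre litt\'eralement l'identit\'e du th\'eor\`eme \ref{eqfonct1b}, \'etape de calcul que vous d\'ecrivez sans la d\'etailler.
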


\subsection{Construction de l'isomorphisme} \label{constructionisom}

Dans \cite{Nakamura2}, Nakamura construit, pour toute paire $(A, D)$ comme ci-dessus, un candidat pour l'isomorphisme $\epsilon_{A}: \mathbf{1}_A \xrightarrow{\sim} \Delta_A(D)$ et il démontre le résultat suivant:

\begin{theorem} [{\cite[Th. 3.1]{Nakamura2}}]
Les isomorphismes $\epsilon_{A}(D)$, pour $(A, D)$ tel que $D$ est de rang $\leq 2$, satisfont les propriétés $(1), (2)$ et $(3)$ et $(4)$ de la conjecture. De plus, si $(A, D) = (L, D)$ est tel que $D$ est de Rham et soit triangulin soit non-triangulin à poids de Hodge-Tate $k_1 \leq 0$ et $k_2 \geq 1$, alors $\epsilon_{L}(D) = \epsilon_{L}^{\rm dR}(D)$.
\end{theorem}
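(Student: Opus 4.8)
The statement is due to Nakamura; I sketch how the proof runs. The candidate $\epsilon_A(D)$ is built from the cohomology of $D$: one takes the determinant line $\Delta_{A,1}(D) = \mathrm{Det}_A(\mathscr{C}^\bullet_{\varphi,\gamma}(D))$, trivialises it (after tensoring with $\Delta_{A,2}(D) = \mathscr{L}_A(D) = A\cdot e_D$) using Colmez's local Iwasawa pairing $\langle\,,\,\rangle_{\mathrm{Iw}}$ of §\ref{accIw} together with the isomorphism $\mathrm{Exp}^*\colon H^1_{\mathrm{Iw}}(D) \xrightarrow{\sim} D^{\psi=1}$, and obtains $\epsilon_A(D)\colon \mathbf{1}_A \xrightarrow{\sim} \Delta_A(D)$. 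Properties $(1)$–$(4)$ are then formal: $(1)$ follows from the base-change compatibility of $(\varphi,\gamma)$-cohomology and of the determinant functor (Prop. \ref{KPX}); $(2)$ from the multiplicativity of $\mathrm{Det}_A$ along short exact sequences, the additivity of $\mathscr{L}_A$, and the long exact cohomology sequence; $(3)$ from inspecting where the system $(\zeta_{p^n})_n$ enters the construction — only through the Gauss sums $G(\eta)$ and the normalisation of $\mathrm{Exp}^*$ — which yields exactly the factor $\det_D(a)$ when $(\zeta_{p^n})$ is replaced by $(\zeta_{p^n}^a)$; and $(4)$ from local Tate duality for $(\varphi,\Gamma)$-modules (Prop. \ref{KPX}) combined with the $\sigma_{-1}$-behaviour of the Iwasawa pairing (Lemmas \ref{lemmeIwacc} and \ref{accIwphiGamma}), which matches $\epsilon_{A,\sigma_{-1}(\zeta)}(\check D)^*$ with $\epsilon_{A,\zeta}(D)$.

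The substantive point is the equality $\epsilon_L(D) = \epsilon_L^{\mathrm{dR}}(D)$ for $(L,D)$ de Rham. For $D$ of rank $1$ this is Kato's theorem (\cite{Kato2b}, see also \cite{Nakamura3}), via an explicit local class field theory computation; the rank-$2$ triangulin case is reduced to rank $1$ using the multiplicativity $(2)$ and the compatibility of both trivialisations with the extension defining the triangulation. When $D$ is non-triangulin with Hodge–Tate weights $k_1 \le 0$ and $k_2 \ge 1$ one compares the two sides directly. Both are trivialisations of the same invertible $L$-module; unravelling the Iwasawa-theoretic definition of $\epsilon_L(D)$ by the explicit reciprocity laws (Propositions \ref{loirecexpd} and \ref{loirecexp}) expresses it through the Bloch–Kato maps $\exp$ and $\exp^*$ applied to the specialisations $\int_\Gamma \eta\chi^{j}\cdot\mu$ of suitable Iwasawa classes $\mu$, whereas $\theta_{\mathrm{dR},L}(D)$ contributes the de Rham local constant $\epsilon(\D_{\mathrm{pst}}(D))$ and the gamma-factor $\Gamma_L(D)$.

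The crux — and the main obstacle — is that matching the two descriptions requires a functional equation relating $\exp^*(\int_\Gamma \eta\chi^{-j}\cdot\check\mu)$ to $\exp^*(\int_\Gamma \eta^{-1}\chi^{j}\cdot\mu)$ for $1-k \le j \le 0$, where $\check\mu$ is the image of $\mu$ under Colmez's involution $w_D$ on $D^{\psi=1}$, with proportionality constant equal, up to elementary gamma-factors and Gauss sums, to the $\epsilon$-factor $\epsilon(\pi\otimes\eta^{-1}\otimes|\cdot|^{j})$ of the smooth $\mathrm{GL}_2(\qp)$-representation $\pi$ attached to $D$. Producing this $\epsilon$-factor is where the $p$-adic Langlands correspondence for $\mathrm{GL}_2(\qp)$ intervenes: $w_D$ is the restriction of the action of $\matrice 0 1 1 0$ on the Banach representation $\Pi(D)$, so after passing to the Kirillov model of $\Pi(D)$ and using the local–global compatibility between the $p$-adic and classical Langlands correspondences (\cite{Emerton2}) together with Proposition \ref{BHeqfonct}, the relevant scalar is identified with the $\epsilon$-factor of $\pi$. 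For $1-k\le j\le 0$ this functional equation is \cite[Prop. 3.14]{Nakamura2}; it supplies the comparison at enough de Rham twists to force $\epsilon_L(D) = \epsilon_L^{\mathrm{dR}}(D)$, which completes the argument. (Theorem \ref{eqfonct1b} of the present paper provides the complementary functional equation for $j\ge 1$, which is what is needed for the full $\epsilon$-conjecture in dimension $2$; the techniques used here also reprove Prop. 3.14.)
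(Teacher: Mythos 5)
Ce théorème est cité de Nakamura sans démonstration dans l'article : le texte se contente d'en rappeler la construction dans \S \ref{constructionisom} et de préciser, en note de bas de page, que le point clé du cas non triangulin à poids $k_1 \leq 0$, $k_2 \geq 1$ est l'équation fonctionnelle de \cite[Prop. 3.14]{Nakamura2} pour $1 - k \leq j \leq 0$. Votre esquisse est conforme à cette description (trivialisation via l'accouplement d'Iwasawa, réduction du cas triangulin au rang $1$, apparition du facteur $\epsilon(\pi \otimes \eta^{-1} \otimes |\cdot|^j)$ via le modèle de Kirillov et la compatibilité locale-globale), et votre parenthèse finale situe correctement la contribution du présent article (le cas $j \geq 1$, théorème \ref{eqfonct1b}) ; il n'y a donc rien à comparer au-delà de cette concordance.
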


Rappelons brièvement la construction de l'isomorphisme (cf. \cite[\S 3.2.2]{Nakamura2} pour les détails). Soit $D \in \Phi\Gamma^{\text{ét}}(\Robba_A)$ absolument irréductible et notons $\mathbf{Dfm}(D) \in \Phi\Gamma^{\text{ét}}(\Robba^+_A(\Gamma))$ sa déformation cyclotomique (\S \ref{cohomIw}) et
\[ \Delta_{A, i}^{\rm Iw}(D) = \Delta_{\Robba^+_A(\Gamma), i}(\mathbf{Dfm}(D)), \;\;\; i \in \{ 1, 2 \}. \]
L'opérateur $1 - \varphi$ induit (\cite[Corollaire V.1.13(iii)]{ColmezPhiGamma}) un isomorphisme de $\Robba_A(\Gamma)$-modules $D^{\psi = 1} \otimes_{\Robba_A^+(\Gamma)} \Robba_A(\Gamma) \xrightarrow{\sim} D^{\psi = 0}$ et donc un isomorphisme
\[ \Delta_{A, 1}^{\rm Iw}(D) \otimes_{\Robba^+_A(\Gamma)} \Robba_A(\Gamma) \xrightarrow{\sim} \mathrm{Det}_{\Robba_A(\Gamma)}(D^{\psi = 0})^{-1}. \]
Comme $\mathscr{L}_{\Robba^+_A(\Gamma)}(\mathbf{Dfm}(D)) = \mathscr{L}_A(D) \otimes_A \Robba^+_A(\Gamma)$, l'isomorphisme ci-dessus induit
\[ \Delta_{A}^{\rm Iw}(D) \otimes_{\Robba^+(\Gamma)} \Robba_A(\Gamma) \xrightarrow{\sim} (\mathrm{Det}_{\Robba_A(\Gamma)}(D^{\psi = 0}) \otimes_A \mathscr{L}_A(D)^\vee )^{-1}. \]
Enfin, l'accouplement d'Iwasawa (cf. \S \ref{accIw}) nous permet de trivialiser ce dernier module en définissant un isomorphisme
\[ \mathrm{Det}_{\Robba_A(\Gamma)}(D^{\psi = 0}) \otimes_A \mathscr{L}_A(D)^\vee \xrightarrow{\sim} \Robba_A(\Gamma) \]
par $(x \wedge y) \otimes z^\vee \mapsto [\sigma_{-1}] \langle w_D(x) \otimes z^\vee \otimes e_1, y \rangle_{\rm Iw}.$ Ceci induit donc
\[ \eta_{A}(D): \mathbf{1}_{\Robba_A(\Gamma)} \xrightarrow{\sim} \Delta_A^{\rm Iw}(D) \otimes_{\Robba^+_A(\Gamma)} \Robba_A(\Gamma). \]

On montre (cf. \cite[Prop. 3.4]{Nakamura2}) que l'isomorphisme $\eta_{A}(D)$ descend en un isomorphisme
\[ \epsilon^{\rm Iw}_{A}(D): \mathbf{1}_{\Robba_A^+(\Gamma)} \xrightarrow{\sim} \Delta_A^{\rm Iw}(D) \]
et, pour $\delta: \zpe \to A^\times$ un caractère localement analytique, on définit
\[ \epsilon_{A}(D(\delta)) = \epsilon^{\rm Iw}_{A}(D(\delta)) \otimes_{\Robba^+(\Gamma), f_\delta} 1_A: \mathbf{1}_A \xrightarrow{\sim} \Delta_A^{\rm Iw}(D) \otimes_{\Robba^+_A(\Gamma), f_\delta} A \xrightarrow{\sim} \Delta_A(D(\delta)), \]
où on note $f_\delta: \Robba^+_A(\Gamma) \to A$ le morphisme induit par $[\gamma] \mapsto \delta(\gamma)$, $\gamma \in \Gamma$.

Si on déroule les définitions ci-dessus (cf. aussi \cite[Rem. 3.8]{Nakamura2}), on voit que la trivialisation du module $\Delta_A^{\rm Iw}(D) = \mathrm{Det}(D^{\psi = 1}) \otimes_A \mathscr{L}_A(D)^\vee \xrightarrow{\sim} \mathrm{Det}_{\Robba_A(\Gamma)}(D^{\psi = 0}) \otimes_A \mathscr{L}_A(D)^\vee$ construite pour définir l'isomorphisme $\epsilon_{A}(D(\delta))$ est induite par
\[ (x \wedge y) \otimes z^\vee \mapsto \int_\Gamma \delta \cdot \mu_{[\sigma_{-1}] \langle w_D((1 - \varphi)x \otimes z^\vee), (1 - \varphi) y \rangle_{\rm Iw}}. \]

\subsection{Interpolation}


Soit $D \in \Phi\Gamma^{\text{ét}}(\Robba)$ de dimension $2$, de Rham non-triangulin à poids de Hodge-Tate $0$ et $k \geq 0$. Pour $j \in \Z$, notons \footnote{Par un petit abus de notation, on note par $\langle \;,\; \rangle_{\rm dif}$ l'accouplement pour tous le différents tordus de $D$.}
\[ \langle \;,\; \rangle_{\rm dif} \colon \DdR(\check{D}(-j)) \times \DdR(D(j)) \to L \]
l'accouplement naturel induit par restriction de l'accouplement $\langle \;,\; \rangle_{\rm dif}$ entre $\Ddif(\check{D}(-j))$ et $\Ddif(D(j))$ défini par $\langle x, y \rangle_{\rm dif} := T_L(\mathrm{r\acute{\mathrm{e}}s}_0(\langle x, y \rangle))$ pour $n \gg 0$, $x \in \Ddif(\check{D}(-j)) = \mathrm{Hom}(\Ddif(D(j)), L_\infty((t)) dt), y \in \Ddif(D(j))$ (cf. \cite[\S VI.3.4]{ColmezPhiGamma}). Notons que l'accouplement ci-haut n'est rient d'autre que l'accouplement $\{ \; , \; \}_{\rm dR}$ défini dans \S \ref{appsexp}. Rappelons que l'on a une identification entre $D(j) \otimes \omega_{D(j)}^\vee$ et $\check{D}(-j)$, envoyant $x \otimes e_{\omega_{D(j)}^\vee}$ sur la forme linéaire $y \mapsto (x \wedge y) \otimes e_{\omega_{D(j)}^\vee}$. On en déduit des isomorphismes entre $\Ddif(D(j)) \otimes \omega_{D(j)}^\vee$ et $\Ddif(\check{D}(-j))$ envoyant $x \otimes e_{\omega_{D(j)}^\vee}$ sur la forme linéaire $y \mapsto (x \wedge y) \otimes e_{D(j)}^\vee dt$.

Rappelons que l'on a fixé une base $\{ f_1, f_2 \}$ de $\DdR(D)$, ce qui permet de fixer des bases $\{ f_1 \otimes \mathbf{e}^{\rm dR}_{j}, f_2 \otimes \mathbf{e}^{\rm dR}_{j} \}$ des modules $\DdR(D(j)) = \DdR(D) \otimes L \cdot \mathbf{e}^{\rm dR}_j$. Ce choix induit un produit scalaire
\[ \langle \;,\; \rangle_{\rm dR} \colon \DdR(D(j)) \times \DdR(D(j)) \to L \]
défini par la formule $x = \sum_{i = 1, 2} \langle x, f_i \otimes \mathbf{e}^{\rm dR}_j \rangle \, (f_i \otimes \mathbf{e}^{\rm dR}_j)$. Rappelons aussi que $\Omega$ est défini par la formule $ f_1 \wedge f_2 =  \Omega^{-1} t^{-k} e_{\omega_{D}} \cdot dt$.

\begin{lemme} \label{lemmeacc} Sous les identifications faites ci-dessus, on a, pour $x \in \DdR(\check{D}(-j))$, \[ \langle x, f_{i} \otimes \mathbf{e}^{\rm dR}_j \rangle_{\rm dif} = (-1)^{i} \langle x \otimes \mathbf{e}^{\rm dR, \vee}_{-j, \omega_D^\vee}, f_i \rangle_{\rm dR}. \]
\end{lemme}

\begin{proof}
En écrivant $x = x' \otimes e_{\omega_{D(j)}^\vee}$, $x' \in \Ddif(D)$, on a
\[ \langle x, f_{i} \otimes \mathbf{e}^{\rm dR}_j \rangle_{\rm dif} = T_L(\mathrm{r\acute{\mathrm{e}}s}_0( (x' \wedge (f_i \otimes \mathrm{e}_j^{\rm dR})) \otimes e_{D(j)}^\vee dt). \]
Or, \[ (x' \wedge (f_i \otimes \mathrm{e}_j^{\rm dR})) = \langle x', f_{3 - 1} \otimes \mathbf{e}^{\rm dR}_j \rangle_{\rm dR}  ( (f_{3-i} \otimes \mathbf{e}^{\rm dR}_j) \wedge (f_i \otimes \mathbf{e}^{\rm dR}_i)), \] et, comme $(f_{3 - i} \otimes \mathbf{e}^{\rm dR}_j \wedge f_i \otimes \mathbf{e}^{\rm dR}_j) = (f_{3 - i} \wedge f_i) \otimes \mathbf{e}^{\rm dR, \otimes 2}_j = (-1)^{i} \Omega^{-1} t^{-k -2j} e_{D(j)}$, on en déduit
\[ \mathrm{r\acute{\mathrm{e}}s}_0( (x' \wedge (f_i \otimes \mathrm{e}_j^{\rm dR})) \otimes e_{D(j)}^\vee dt = (-1)^i \langle x \otimes \mathbf{e}_{\omega_{D(j)}^\vee}^{\rm dR, \vee}, f_i \otimes \mathbf{e}_j^{\rm dR} \rangle_{\rm dR} = (-1)^i \langle x \otimes \mathbf{e}_{-j, \omega_{D}^\vee}^{\rm dR, \vee}, f_i \rangle_{\rm dR} , \] ce qui permet de conclure.
\end{proof}


\begin{theorem} \label{conjepsilon}
Soit $D \in \Phi \Gamma^{\text{ét}}(\Robba)$ de rang $2$, de Rham non-triangulin. Alors \[ \epsilon_{L}(D) = \epsilon_{L}^{\rm dR}(D). \]
\end{theorem}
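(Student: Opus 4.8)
The strategy is a reduction: by Nakamura's theorem (\cite[Th. 3.1]{Nakamura2}), the isomorphisms $\epsilon_A(D)$ he constructs already satisfy properties $(1)$--$(4)$ of Conjecture \ref{eaeaconj}, and property $(5)$, namely $\epsilon_L(D) = \epsilon_L^{\rm dR}(D)$, is known whenever $D$ is triangulin or when $D$ is non-trianguline with Hodge--Tate weights $k_1 \leq 0$ and $k_2 \geq 1$. So it suffices to treat the remaining case: $D$ non-trianguline of rank $2$, de Rham, with Hodge--Tate weights $0$ and $k$ for some $k \geq 0$ — and, by the earlier footnote remark, after twisting by a power of the cyclotomic character we may as well assume $D = \Delta_{k, \mathscr{L}}$ with $\Delta = \Nrig(D)$ as in \S\ref{phiGammadR}. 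The point is that Theorem \ref{eqfonct1b} supplies exactly the functional equation that \cite[Prop. 3.14]{Nakamura2} was missing for the range $j \geq 1$ (or $j \geq 0$), so that once this input is available, Nakamura's argument showing that $\epsilon_L^{\rm Iw}(D)$ interpolates the de Rham trivialization goes through.

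First I would unwind the definition of $\epsilon_L(D) = \epsilon_L(D_0(\chi^{-j}))$ for a twist $D_0(\chi^{-j})$ of the weight-$(0,k)$ module, using the explicit formula from \S\ref{constructionisom}: the trivialization of $\Delta_L^{\rm Iw}$ at the character $\chi^{-j}$ is given by $(x \wedge y) \otimes z^\vee \mapsto \int_\Gamma \chi^{-j} \cdot \mu_{[\sigma_{-1}]\langle w_D((1-\varphi)x \otimes z^\vee), (1-\varphi)y\rangle_{\rm Iw}}$, and Lemma \ref{lemme26} together with Lemma \ref{accIwphiGamma} rewrites these specializations in terms of Bloch--Kato exponentials and dual exponentials of $\mu_z$ and $\mu_{\check z}$. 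On the other side, $\epsilon_L^{\rm dR}(D_0(\chi^{-j}))$ is built from $\Gamma_L$, the map $\theta_L$ coming from the fundamental exact sequence relating $H^i_{\varphi,\gamma}$, $\Dcris$ and the tangent spaces, and $\theta_{\rm dR,L}$ which contributes the Weil--Deligne $\epsilon$-factor $\epsilon(\D_{\rm pst}(D))$ and a power of $t$. Via the local--global compatibility of the $p$-adic Langlands correspondence (\cite{Emerton2}), $\epsilon(\D_{\rm pst}(D))$ matches $\epsilon(\pi)$, the $\epsilon$-factor of the smooth representation $\pi = \mathrm{LL}_p(\Delta)$. Comparing the two sides reduces, character by character $\eta\chi^{-j}$ with $\eta$ of finite order, to precisely the identity of Theorem \ref{eqfonct1b}: the ratio $\Gamma^*(-j+1)/\Gamma^*(j+k)$, the $\epsilon(\eta^{-1})^{-2}$ factor (Gauss sums), and $\epsilon(\pi \otimes \eta^{-1} \otimes |\cdot|^j)$ are exactly the constituents of the constant $C(D,\eta,j)$.

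Then I would verify that matching $\epsilon_L(D)$ and $\epsilon_L^{\rm dR}(D)$ at the Zariski-dense set of de Rham points $\eta\chi^{-j}$, $j \geq 1$ (or $j \geq 0$) and $\eta$ finite order, forces the equality of the two $\Robba^+(\Gamma)$-trivializations $\epsilon_L^{\rm Iw}(D)$ and (the de Rham counterpart), hence $\epsilon_L(D(\delta)) = \epsilon_L^{\rm dR}(D(\delta))$ for all locally algebraic $\delta$ — this is the same density/rigidity argument already used in \cite{Nakamura2} to deduce the result in the weight $k_1 \leq 0, k_2 \geq 1$ case from his Prop. 3.14. Finally, undo the initial cyclotomic twist using property $(1)$ and the compatibility of $\epsilon^{\rm dR}$ with twists. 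The main obstacle — and the reason the whole second chapter of the paper exists — is not this reduction, which is essentially bookkeeping in the determinant formalism, but the proof of Theorem \ref{eqfonct1b} itself, i.e. establishing the functional equation in the previously inaccessible range; given that theorem, the deduction of $\epsilon_L(D) = \epsilon_L^{\rm dR}(D)$ is the comparatively soft step. One technical point to handle with care is the precise normalization of $\epsilon$-factors and signs (the footnote in \S\ref{Changement} about the $(-1)^k$ and the $w$ versus ${\matrice 0 1 {-1} 0}$ discrepancy in Prop. \ref{BHeqfonct}), since these must line up exactly for the constants to cancel.
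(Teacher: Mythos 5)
Your proposal is correct and follows essentially the same route as the paper: reduce via duality (property (4)) and Nakamura's result to the case of Hodge--Tate weights $0$ and $k\geq 0$, unwind $\beta_L(D(j))$ via the Iwasawa pairing and Lemma \ref{lemme26} and $\beta_L^{\rm dR}(D(j))$ via the fundamental exact sequence and $\epsilon(\D_{\rm pst})$, identify the latter with $\epsilon(\pi\otimes|\cdot|^j)$ by local--global compatibility, and conclude by Theorem \ref{eqfonct1b}. The only (harmless) excess is the final density/rigidity step: since the de Rham comparison is a statement at the single twist $D(j)$, the paper simply checks the identity there directly with $\eta$ trivial, with no interpolation needed.
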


\begin{proof}
Soit $D \in \Phi\Gamma^{\text{ét}}(\Robba)$ de rang $2$, de Rham non-triangulin à poids de Hodge-Tate $0$ et $k \geq 0$. Pour montrer le théorème, il suffit de montrer que les isomorphismes epsilon coïncident sur $D(j)$ pour tout $j \in \Z$. La preuve du théorème consiste à expliciter les deux isomorphismes en question pour se ramener à l'équation fonctionnelle des théorèmes \ref{eqfonct1} et \ref{eqfonct1b}.

La suite exacte définissant le morphisme $\theta_L(D)$ acquiert, dans le cas non-triangulin, la forme suivante: 
\begin{equation} \label{suiteexatedR}
0 \to \DdR(D(j)) / \mathrm{Fil}^0 \, \DdR(D(j)) \xrightarrow{\exp} H^1_{\varphi, \gamma}(D(j)) \xrightarrow{\exp^*} \mathrm{Fil}^0 \, \DdR(D(j)) \to 0.
\end{equation}
De plus, on sait que $H^0_{\varphi, \gamma}(D(j)) = H^2_{\varphi, \gamma}(D(j)) = 0$ et donc $\mathrm{Det}(\mathscr{C}^\bullet_{\varphi, \gamma}(D(j)) = \mathrm{Det}(H^1_{\varphi, \gamma}(D(j))^\vee$. On a trois situations possibles:
\begin{enumerate}
\item Soit $\mathrm{Fil}^0 \, \DdR(D(j)) = 0$ (le cas de poids de Hodge-Tate strictement positifs).
\item Soit $\dim_L \mathrm{Fil}^0 \, \DdR(D(j)) = 1$ (le cas de poids de Hodge-Tate $k_1 \leq 0$ et $k_2 > 0$).
\item Soit $\mathrm{Fil}^0 \, \DdR(D(j)) = \DdR(D(j))$ (le cas de poids de Hodge-Tate négatifs).
\end{enumerate}
Comme on a remarqué, le deuxième cas a été traité par Nakamura, et les deux autres cas sont en dualité (condition $(4)$ de la conjecture). Il suffit donc de montrer le résultat dans le premier cas. Les deux lemmes suivants explicitent les deux isomorphismes en jeu.

\begin{lemme}
Le morphisme $\beta_{L}^{\rm dR}(D(j)) \colon \mathrm{Det}( H^1_{\varphi, \gamma}(D(j))) \to \mathscr{L}_L(D(j))$ déduit de $\epsilon^{\rm dR}_{L}(D(j))$ satisfait \[ \beta^{\rm dR}_{L}(D(j)) \left( (\int_\Gamma \chi^j \cdot \mu_z) \wedge \exp (f_i \otimes \mathbf{e}^{\rm dR}_j) \right) = (-1)^{i} C_1 \, e_{D(j)}, \] où \[ C_1 = \Gamma_L(D(j)) \, \Omega^{-1} \, \epsilon(\D_{\rm pst}(D(j))) \, \langle \exp^{-1}(\int_\Gamma \chi^j \cdot \mu_z) \otimes \mathbf{e}^{\rm dR, \vee}_j, f_{3 - i} \rangle. \]
\end{lemme}

\begin{proof}
Notons $\theta_1'(D(j)) \colon \mathrm{Det}(H^1_{\varphi, \Gamma}(D(j))) \to \mathrm{Det}(\DdR(D(j)))$ le morphisme qui s'en déduit en prenant le déterminant de la suite exacte \eqref{suiteexatedR}. On a alors
\begin{eqnarray*}
\theta_1'(D(j))\left( (\int_\Gamma \chi^j \cdot \mu_z) \wedge \exp(f_i \otimes \mathbf{e}^{\rm dR}_j) \right) &=& \exp^{-1}(\int_\Gamma \chi^j \cdot \mu_z) \wedge (f_i \otimes \mathbf{e}^{\rm dR}_j) \\
&=& \langle \exp^{-1}(\int_\Gamma \chi^j \cdot \mu_z), f_{3 - i} \otimes \mathbf{e}^{\rm dR}_j \rangle \cdot (f_{3 - i} \otimes \mathbf{e}^{\rm dR}_j \wedge f_i \otimes \mathbf{e}^{\rm dR}_j). \\
\end{eqnarray*}
Comme $(f_{3 - i} \otimes \mathbf{e}^{\rm dR}_j \wedge f_i \otimes \mathbf{e}^{\rm dR}_j) = (-1)^{i} \Omega^{-1} t^{-k -2j} e_{D(j)}$, la dernière expression est égale à
\[ (-1)^{i} \Omega^{-1}  \, t^{-k - 2j} \, \langle \exp^{-1}(\int_\Gamma \chi^j \cdot \mu_z) \otimes \mathbf{e}^{\rm dR, \vee}_j, f_{3 - i} \rangle \, e_{D(j)}. \]

On conclut à partir de la formule $\beta^{\rm dR}_{L} = \theta_2 \circ \theta_1 = \Gamma_L \cdot \theta_2 \circ \theta_1'$ que $\beta^{\rm dR}_{L}(D(j) \left( (\int_\Gamma \chi^j \cdot \mu_z) \wedge (f_i \otimes \mathbf{e}^{\rm dR}_j) \right)$ est donné par
\[ (-1)^{i} \Gamma_L(D(j)) \, \Omega^{-1} \, \epsilon(\D_{\rm pst}(D(j))) \, \langle \exp^{-1}(\int_\Gamma \chi^j \cdot \mu_z) \otimes \mathbf{e}^{\rm dR, \vee}_j, f_{3 - i} \rangle \, e_{D(j)}, \] ce qui finit la preuve du lemme.
\end{proof}

\begin{lemme}
Le morphisme $\beta_{L}(D(j)) \colon \mathrm{Det}( H^1_{\varphi, \gamma}(D(j))) \to \mathscr{L}_L(D(j))$ déduit de $\epsilon_{L}(D(j))$ satisfait \[ \beta_{L}(D(j)) \left( (\int_\Gamma \chi^j \cdot \mu_z) \wedge \exp (f_i \otimes \mathbf{e}^{\rm dR}_j) \right) = (-1)^{i}  C_2 \, e_{D(j)}.\] où
\[ C_2 = (-1)^j \langle \exp^*(\int_\Gamma \chi^{-j} \cdot \mu_{\check{z}}) \otimes \mathbf{e}^{\rm dR, \vee}_{-j, \omega_D^\vee}, f_{3 - i} \rangle_{\rm dR}. \]
\end{lemme}

\begin{proof}
Soit $z' \in D^{\psi = 1}$ tel que $\int_\Gamma \chi^j \cdot \mu_{z'} = \exp( f_i \otimes \mathbf{e}^{\rm dR}_j)$. On a, par définition de $\beta_{L}(D(j))$ (cf. la dernière formule de \S \ref{constructionisom}),
\begin{eqnarray*}
\beta_{L}(D(j)) \left( (\int_\Gamma \chi^j \cdot \mu_z) \wedge \exp (f_i \otimes \mathbf{e}^{\rm dR}_j) \right) &=& \int_\Gamma \chi^j \cdot \langle \sigma_{-1}((1 - \varphi) \check{z}), (1 - \varphi)  z' \rangle_{\rm Iw} \, e_{D(j)} \\
&=& \langle \exp^*(\int_\Gamma \chi^{-j} \cdot \mu_{\sigma_{-1}(\check{z})}), \exp^{-1}(\int_\Gamma \chi^j \cdot \mu_{z'}) \rangle_{\rm dif} \, e_{D(j)} \\ 
&=& (-1)^j \langle \exp^*(\int_\Gamma \chi^{-j} \cdot \mu_{\check{z}}), f_i \otimes \mathbf{e}^{\rm dR}_j \rangle_{\rm dif} \, e_{D(j)}, \\ 
\end{eqnarray*}
où la deuxième égalité suit du lemme \ref{lemme26}, et la dernière suit par définition de l'action de $\Lambda$ sur la cohomologie d'Iwasawa. Par le lemme \ref{lemmeacc} ci-dessus, cette dernière expression coïncide avec
\[ (-1)^{i} (-1)^j \langle \exp^*(\int_\Gamma \chi^{-j} \cdot \mu_{\check{z}}) \otimes \mathbf{e}^{\rm dR, \vee}_{-j, \omega_D^\vee}, f_{3 - i} \rangle_{\rm dR} \, e_{D(j)}, \]
ce qui permet de conclure.
\end{proof}

Revenons à la preuve du théorème. Les poids de Hodge-Tate de la représentation $D(j)$ sont $j \geq 1$ et $k+j \geq 1$ et on a \[ \Gamma(D(j)) = \Gamma^*(j)^{-1} \Gamma^*(k+j)^{-1} = \frac{1}{(j+k-1)! (j-1)!} = (-1)^{j-1} \frac{\Gamma^*(-j + 1)}{\Gamma^*(j + k)}. \] Enfin, la compatibilité locale-globale dans la correspondance de Langlands $p$-adique pour $\mathrm{GL}_2(\qp)$ (\cite{Emerton2}) montre que \[ \epsilon(\D_{\rm pst}(D(j))) = p^{-c(\pi) j} \epsilon(\pi) = \epsilon(\pi \otimes | \cdot |^j), \] où $\pi = \mathrm{LL}_p(\Nrig(D))$ est comme dans \S \ref{Changement}. Le théorème est donc équivalent à l'identité
\[ \exp^*(\int_\Gamma \chi^{-j} \cdot \mu_{\check{z}}) \otimes \mathbf{e}^{\rm dR, \vee}_{-j, \omega_D^\vee} = - \frac{\Gamma^*(-j + 1)}{\Gamma^*(j + k)}  \, \Omega^{-1} \, \epsilon(\pi \otimes | \cdot |^j) \, \exp^{-1}(\int_\Gamma \chi^j \cdot \mu_z) \otimes \mathbf{e}^{\rm dR, \vee}_j. \] \normalsize
On conclut en utilisant le théorème \ref{eqfonct1b} (voir aussi la remarque après sa démonstration) si $k \neq 0$, ou le théorème \ref{eqfonct1} si $k = 0$.
\end{proof}

\bibliographystyle{acm}
\bibliography{bibliography}

\def\Dbar{\leavevmode\lower.6ex\hbox to 0pt{\hskip-.23ex \accent"16\hss}D}
  \def\cfac#1{\ifmmode\setbox7\hbox{$\accent"5E#1$}\else
  \setbox7\hbox{\accent"5E#1}\penalty 10000\relax\fi\raise 1\ht7
  \hbox{\lower1.15ex\hbox to 1\wd7{\hss\accent"13\hss}}\penalty 10000
  \hskip-1\wd7\penalty 10000\box7}
  \def\cftil#1{\ifmmode\setbox7\hbox{$\accent"5E#1$}\else
  \setbox7\hbox{\accent"5E#1}\penalty 10000\relax\fi\raise 1\ht7
  \hbox{\lower1.15ex\hbox to 1\wd7{\hss\accent"7E\hss}}\penalty 10000
  \hskip-1\wd7\penalty 10000\box7}
\begin{thebibliography}{10}

\bibitem{BB}
{\sc Benois, D. et~Berger, L.}
\newblock Th\'eorie d'{I}wasawa des repr\'esentations cristallines. {II}.
\newblock {\em Comment. Math. Helv. 83\/} (2008), 603--677.

\bibitem{Berger02}
{\sc Berger, L.}
\newblock Repr\'esentations {$p$}-adiques et \'equations diff\'erentielles.
\newblock {\em Invent. Math. 148\/} (2002), 219--284.

\bibitem{Berger03}
{\sc Berger, L.}
\newblock Bloch and {K}ato's exponential map: three explicit formulas.
\newblock {\em Doc. Math. Extra Vol.\/} (2003), 99--129.

\bibitem{Berger08}
{\sc Berger, L.}
\newblock \'{E}quations diff\'erentielles {$p$}-adiques et {$(\phi,N)$}-modules
  filtr\'es.
\newblock {\em Ast\'erisque}, 319 (2008), 13--38.

\bibitem{BH}
{\sc Bushnell, C. J. et~Henniart, G.}
\newblock {\em The local {L}anglands conjecture for {$\rm GL(2)$}}.
\newblock Springer-Verlag, 2006.

\bibitem{CC}
{\sc Cherbonnier, F. et~Colmez, P.}
\newblock Repr\'esentations {$p$}-adiques surconvergentes.
\newblock {\em Invent. Math. 133\/} (1998), 485--571.

\bibitem{ColmezIw2}
{\sc Cherbonnier, F. et~Colmez, P.}
\newblock Th\'eorie d'{I}wasawa des repr\'esentations {$p$}-adiques d'un corps
  local.
\newblock {\em J. Amer. Math. Soc. 12\/} (1999), 241--268.

\bibitem{ColmezIw1}
{\sc Colmez, P.}
\newblock Th\'eorie d'{I}wasawa des repr\'esentations de de {R}ham d'un corps
  local.
\newblock {\em Ann. of Math. 148\/} (1998), 485--571.

\bibitem{Colmezfoncdunevariablepadique}
{\sc Colmez, P.}
\newblock Fonctions d'une variable {$p$}-adique.
\newblock {\em Ast\'erisque}, 330 (2010), 13--59.

\bibitem{ColmezPhiGamma}
{\sc Colmez, P.}
\newblock Repr\'esentations de {${\rm GL}_2(\bold Q_p)$} et
  {$(\varphi,\Gamma)$}-modules.
\newblock {\em Ast\'erisque}, 330 (2010), 281--509.

\bibitem{ColmezMirabolique}
{\sc Colmez, P.}
\newblock {$(\varphi,\Gamma)$}-modules et repr\'esentations du mirabolique de
  {${\rm GL}_2(\bold Q_p)$}.
\newblock {\em Ast\'erisque}, 330 (2010), 61--153.

\bibitem{ColmezPoids}
{\sc Colmez, P.}
\newblock Correspondance de langlands {$p$}-adique et changement de poids.
\newblock {\em J. EMS (\`a para\^itre)\/} (2015).

\bibitem{Deligne}
{\sc Deligne, P.}
\newblock Les constantes des \'equations fonctionnelles des fonctions {$L$}.
\newblock In {\em Modular functions of one variable, {II}}, vol.~349 of {\em
  Lecture Notes in Math.} Springer, 1973, pp.~501--597.

\bibitem{Dospinescu}
{\sc Dospinescu, G.}
\newblock Actions infinit\'esimales dans la correspondance de {L}anglands
  locale {$p$}-adique.
\newblock {\em Math. Ann. 354\/} (2012), 627--657.

\bibitem{Emerton2}
{\sc Emerton, M.}
\newblock Local-global compatibility in the {$p$}-adic {L}anglands programme
  for {$\mathrm{GL}_{2 / \mathbf{Q}}$}.
\newblock {\em Preprint, disponible \`a
  \url{http://www.math.uchicago.edu/~emerton/preprints.html}\/} (2011).

\bibitem{Fontaine90}
{\sc Fontaine, {\relax J.-M}.}
\newblock Repr\'esentations {$p$}-adiques des corps locaux. {I}.
\newblock In {\em The {G}rothendieck {F}estschrift, {V}ol.\ {II}}, vol.~87 of
  {\em Progr. Math.} 1990, pp.~249--309.

\bibitem{FukayaKato}
{\sc Fukaya, T. et~Kato, K.}
\newblock A formulation of conjectures on {$p$}-adic zeta functions in
  noncommutative {I}wasawa theory.
\newblock In {\em Proceedings of the {S}t. {P}etersburg {M}athematical
  {S}ociety. {V}ol. {XII}\/} (2006), vol.~219 of {\em Amer. Math. Soc. Transl.
  Ser. 2}, Amer. Math. Soc., pp.~1--85.

\bibitem{Kato2b}
{\sc Kato, K.}
\newblock Lectures on the approach to {I}wasawa theory for {H}asse-{W}eil
  {$L$}-functions via {$B_{\rm dR}$} {II}.
\newblock {\em Non publi\'e\/}.

\bibitem{Kedlaya}
{\sc Kedlaya, K.}
\newblock A {$p$}-adic monodromy theorem.
\newblock {\em Ann. of Math. 160\/} (2004), 93--184.

\bibitem{KPX}
{\sc Kedlaya, {\relax K. S., Pottharst, J.} et~Xiao, L.}
\newblock Cohomology of arithmetic families of {$(\varphi,\Gamma)$}-modules.
\newblock {\em J. Amer. Math. Soc. 27\/} (2014), 1043--1115.

\bibitem{LVZ}
{\sc Loeffler, {\relax D., Venjakob, O.} et~Zerbes, S.~L.}
\newblock Local epsilon isomorphisms.
\newblock {\em Kyoto J. Math. 55}, 1 (2015), 63--127.

\bibitem{Nakamura}
{\sc Nakamura, K.}
\newblock Iwasawa theory of de {R}ham {$(\varphi,\Gamma)$}-modules over the
  {R}obba ring.
\newblock {\em J. Inst. Math. Jussieu 13\/} (2014), 65--118.

\bibitem{Nakamura3}
{\sc Nakamura, K.}
\newblock A generalization of {K}ato's local $\epsilon$-conjecture for
  {$(\varphi, \Gamma)$}-modules over the {R}obba ring.
\newblock {\em Algebra and Number Theory 11\/} (2017), 319--404.

\bibitem{Nakamura2}
{\sc Nakamura, K.}
\newblock Local {$\epsilon$}-isomorphisms for rank two {$p$}-adic
  representations of {${\rm Gal}(\overline{ \bold Q}_p / \bold \Q_p)$} and a
  functional equation of {K}ato's {E}uler system.
\newblock {\em Cambridge Journal of Mathematics 5\/} (2017), 65--118.

\bibitem{Pottharst}
{\sc Pottharst, J.}
\newblock Cyclotomic {I}wasawa theory of motives.
\newblock {\em Preprint, disponible \`a
  \url{https://vbrt.org/writings/cyc.pdf}\/} (2012).

\bibitem{FonctLpadiques}
{\sc Rodrigues~Jacinto, J.}
\newblock {$(\varphi, \Gamma)$}-modules de de {R}ham et fonctions {L}
  {$p$}-adiques.
\newblock {\em Preprint\/} (2017).

\bibitem{S-T}
{\sc Schneider, P. et~Teitelbaum, J.}
\newblock Algebras of {$p$}-adic distributions and admissible representations.
\newblock {\em Invent. Math. 153\/} (2003), 145--196.

\end{thebibliography}

\end{document}